\documentclass[final,leqno]{siamltex704}
\usepackage{amsmath}
\usepackage{graphicx}
\usepackage[notcite,notref]{showkeys}
\usepackage{mathrsfs}
%\usepackage{graphics}
%% or use the graphicx package for more complicated commands
\usepackage{color}
\usepackage{float}
\usepackage{amsfonts,amssymb}
\usepackage{dsfont}
\usepackage{pifont}
\usepackage{wrapfig} % Allows in-line images
\usepackage{hyperref}
\usepackage{multirow}
\numberwithin{equation}{section}
%% The amssymb package provides various useful mathematical symbols
%\usepackage{amssymb}
%\usepackage{float}
%% The amsthm package provides extended theorem environments
%\usepackage{amsthm}
%\newtheorem{theorem}{Theorem}[section]
%\newtheorem{lemma}[theorem]{Lemma}
%\newtheorem{proposition}[theorem]{Proposition}
%\newtheorem{corollary}[theorem]{Corollary}
%\newtheorem{remark}[theorem]{Remark}
\def\3bar{{|\hspace{-.02in}|\hspace{-.02in}|}}
\def\E{{\mathcal{E}}}
\def\T{{\mathcal{T}}}

\def\pT{{\partial T}}

\def\W{{\mathcal{W}}}

\def\bn{{\mathbf{n}}}

\def\ljump{{[\![}}
\def\rjump{{]\!]}}

\def\bbeta{{\boldsymbol{\beta}}}

\newtheorem{algorithm}{Primal-Dual Weak Galerkin Algorithm}[section]

%\journal{}
\setlength{\parindent}{0.25in} \setlength{\parskip}{0.08in}

\title {A Primal-dual weak Galerkin finite element method for linear convection equations in non-divergence form}

\begin{document}

\author{
Dan Li \thanks{Department of Applied Mathematics, Northwestern Polytechnical University, Xi'an, Shannxi 710072, China.}
\and
Chunmei Wang \thanks{Department of Mathematics \& Statistics, Texas Tech University, Lubbock, TX 79409, USA (chunmei.wang@ttu.edu). The research of Chunmei Wang was partially supported by National Science Foundation Award DMS-1849483.}
\and
Junping Wang\thanks{Division of Mathematical
Sciences, National Science Foundation, Alexandria, VA 22314
(jwang@nsf.gov). The research of Junping Wang was supported in part by the
NSF IR/D program, while working at National Science Foundation.
However, any opinion, finding, and conclusions or recommendations
expressed in this material are those of the author and do not
necessarily reflect the views of the National Science Foundation.}}

\maketitle

\begin{abstract}
This article presents a new primal-dual weak Galerkin (PD-WG) finite element method for first-order linear convection equations in non-divergence form. The PD-WG method is based on the locally reconstructed differential operators (e.g., discrete weak gradients) developed in the weak Galerkin context for both the primal differential operator and its adjoint, with stabilizers employed to enhance the stability of the numerical scheme. The numerical method results in a symmetric discrete linear system involving not only the primal variable, but also the dual variable (also known as the Lagrangian multiplier) for the adjoint equation. Optimal order of error estimates are derived in various discrete Sobolev norms for the numerical solutions arising from the PD-WG scheme. Numerical results are reported to illustrate the accuracy and efficiency of the new PD-WG method.
\end{abstract}

\begin{keywords}
primal-dual weak Galerkin, finite element method, weak Galerkin, linear convection equation, discrete weak gradient, polytopal partitions.
\end{keywords}

\begin{AMS}
Primary, 65N30, 65N15, 65N12; Secondary, 35L02, 35F15, 35B45
\end{AMS}

\pagestyle{myheadings}

\section{Introduction}
This paper is concerned with a new development of numerical methods for first-order linear convection equations in non-divergence form by using discontinuous finite element functions. For simplicity, consider the model problem of seeking an unknown function $\lambda$ satisfying
\begin{equation}\label{model}
\begin{split}
 \bbeta(x)\cdot\nabla\lambda-c(x)\lambda=&f \quad \text{in}\ \Omega,\\
\lambda=&g\quad \text{on}\  \Gamma_-,
\end{split}
\end{equation}
where $\Omega$ is an open bounded and connected domain in $\mathbb R^d \ (d=2, 3)$ with Lipschitz continuous boundary $\Gamma=\partial \Omega$, $\Gamma_-$ is the inflow portion of the domain boundary given by the condition of $\bbeta \cdot \bn<0$ with $\bn$ being the unit outward normal direction to $\Gamma$. Assume that the convection vector $\bbeta=(\beta_1, \cdots, \beta_d) \in [L^{\infty}(\Omega)]^d$, the reaction coefficient $c\in L^{\infty}(\Omega)$, the load function $f\in L^2(\Omega)$, and the inflow boundary data $g\in L^2(\Gamma_-)$.

The first-order linear partial differential equations (PDEs) of hyperbolic-type are also known as transport equations or linear convection equations which arise in many areas of science and engineering. Numerical methods for linear convection equations often face a grand challenge on its stability and capability of resolving the solution's  discontinuity and the sharp changing front in scientific computing. The linear convection equation also serves as an excellent benchmark for testing new ideas in numerical PDEs. Readers are referred to the introduction section of \cite{wwhyperbolic} and the references cited therein for a detailed description of the first-order linear convection equation and some of its physical applications.

This paper aims to develop a new numerical method for the linear convection problem \eqref{model} for which the convection vector $\bbeta$ and the reaction coefficient $c$ are assumed to be piecewise smooth functions without any additional coercivity assumption in the form of $c+\frac12 \nabla\cdot\bbeta \ge \alpha>0$ or alike as commonly seen in most existing literatures in the numerical study. The new numerical scheme is devised by following the framework of the primal-dual weak Galerkin (PD-WG) finite element method introduced and studied in \cite{ww2016, ww2017, ww2018, wz2019, wwhyperbolic, w2018}. The PD-WG method was originally formulated as a constraint optimization problem where the optimization involves minimizing the ``discontinuity" of the approximating solution with constraints given by a local satisfaction of the underlying PDEs on each element. The resulting Euler-Lagrange formulation gives rise to a symmetric system involving both the primal (original) equation and the dual (adjoint) equation interconnected by carefully-chosen stabilizers that provide ``weak continuity" or ``weak smoothness" necessary for convergence and stability. It should be noted that the approach of PD-WG finite element method for solving PDEs was also developed by Burman \cite{Burman2013, burman2014} in other finite element contexts, and it was given the name of {\em ``stabilized finite element methods"} by Burman. Recent studies have shown that the PD-WG finite element methods have great potentials for PDEs for which the traditional variational formulations are either not available or difficult to use for numerical discretization.

Let us briefly discuss the main idea behind the PD-WG method for the first-order linear convection problem \eqref{model}. A weak formulation for the model problem (\ref{model}) can be given by seeking $\lambda \in H_{\bbeta}^1(\Omega)$ such that $\lambda=g$ on $\Gamma_-$ and
\begin{equation}\label{weakform}
(\bbeta\cdot\nabla\lambda-c\lambda, v)=(f, v), \qquad \forall v\in L^2(\Omega),
\end{equation}
where $H_{\bbeta}^1(\Omega)=\{w \in L^2(\Omega): \bbeta\cdot\nabla w \in L^2(\Omega)\}$. The weak gradient operator $\nabla_w$ (see \cite{wy3655} for definition) can be used to reformulate \eqref{weakform} as follows:
\begin{equation}\label{weakform-02}
(\bbeta\cdot\nabla_w\{\lambda\}-c\lambda, v)=(f, v), \qquad \forall v\in L^2(\Omega),
\end{equation}
where $\{\lambda\} = \{\lambda|_T, \lambda|_\pT\}$ is defined as a weak function in the weak Galerkin context. The weak function has approximations by piecewise polynomials on each element $T$ as well as on its boundary $\pT$. The weak gradient $\nabla_w \{\lambda\}$ can be discretized by using vector-valued polynomials, denoted as $\nabla_{w, h} \{\lambda\}$. The variational problem (\ref{weakform-02}) can then be approximated by seeking $\lambda_h =\{\lambda_0,\lambda_b\}\in W_h$ such that $\lambda_b=Q_bg$ on $\Gamma_-$ and satisfying
\begin{equation}\label{EQ:10-12-2015:01}
(\bbeta\cdot\nabla_{w, h} \lambda_h-c\lambda_0, v)=(f, v), \qquad \forall v\in M_h,
\end{equation}
where $W_h$ is a trial space for the weak functions and $M_h$ is a test space consisting of piecewise polynomials. The problem (\ref{EQ:10-12-2015:01}) is well-posed if the {\em inf-sup} condition of Babu\u{s}ka \cite{babuska} is satisfied. For most readily available trial and test spaces, the {\em inf-sup} condition of Babu\u{s}ka is not satisfied so that the problem (\ref{EQ:10-12-2015:01}) is not well posed as it stands. A remedy of this trouble is to consider the dual equation of \eqref{EQ:10-12-2015:01} that seeks $u_h\in M_h$ satisfying
\begin{equation}\label{EQ:09-12-2018:01}
(\bbeta\cdot\nabla_{w, h} \sigma -c\sigma_0, u_h)=0, \qquad
\forall \sigma \in W_h^{0},
\end{equation}
where $W_h^{0}$ is the subspace of $W_h$ consisting of all weak functions with vanishing boundary value on $\Gamma_-$. A formal coupling of \eqref{EQ:10-12-2015:01} and \eqref{EQ:09-12-2018:01} results in the following primal-dual weak Galerkin scheme: Find $\lambda_h\in W_h$ and $u_h\in M_h$, such that $\lambda_b=Q_bg$ on $\Gamma_-$ and
\begin{equation}\label{primal-dual-wg}
\left\{
\begin{split}
s(\lambda_h, \sigma)+(\bbeta\cdot\nabla_{w, h} \sigma -c\sigma_0, u_h)=\sum_{T\in {\cal T}_h} \tau_1 (f, \bbeta \cdot \nabla \sigma_0-c\sigma_0), \qquad \forall \sigma \in W_h^{0},\\
-\tau_2 \sum_{T\in \T_h} h_T^2(u_h, v)_T+(\bbeta\cdot\nabla_{w, h} \lambda_h-c\lambda_0, v)=(f, v), \qquad \forall v\in M_h,
 \end{split}\right.
\end{equation}
where $s(\cdot, \cdot)$ is a stabilizer/smoother that enforces necessary weak continuity for the numerical solution $\lambda_h$. In fact, the stabilizer aims to measure the level of ``continuity" of the weak function $\sigma \in W_{h}$ in the sense that $\sigma \in W_{h}$ is a classical $C^0$-conforming element if and only if $s(\sigma, \sigma)=0$.

The linear convection equation in non-divergence form \eqref{model} is essentially the adjoint of the model problem in divergence form discussed in \cite{wwhyperbolic}. In the development of the PD-WG method in \cite{wwhyperbolic}, the linear convection equation in divergence form was characterized by a weak form obtained through the usual integration by parts so that no partial derivatives are taken on the primal variable. The corresponding numerical scheme is therefore of low-regularity in nature, as convergence occurs under any $H^\theta$-regularity for the exact solution with $\theta >0$. For the linear convection equation in non-divergence form \eqref{model}, we shall and have to use a straightforward weak form through a simple test against any square integrable function. Due to the difference of characterization for the primal equation, a least-squares term in the form of $\tau_2 \sum_{T\in \T_h} h_T^2(u_h, v)$ is added to the primal equation in the new PD-WG scheme \eqref{primal-dual-wg} in order to achieve an optimal order of convergence in $L^2$. Furthermore, a term like $\sum_{T\in {\cal T}_h} \tau_1 (f, \bbeta \cdot \nabla \sigma_0-c\sigma_0)$ is added to the right-hand side of the dual equation due to the corresponding least squares term in the stabilizer $s(\cdot, \cdot)$ to be detailed in later sections. Optimal order of error estimates are established in various discrete Sobolev norms for the new PD-WG finite element method.

The main contributions of this paper are the following. First of all, a new numerical method was designed and analyzed thoroughly for its stability and convergence. Secondly, the mathematical theory was established based on a minimal assumption on the linear convection equation and its coefficients; namely, the original problem has one and only one solution and the coefficients are merely piecewise smooth. Due to the global non-smoothness of the convection vector $\bbeta$, the linear convection equation in non-divergence form can not be formulated in a divergence form for any possible application of the numerical method developed in  \cite{wwhyperbolic}. Therefore, the result of this paper fills a gap of numerical study of PDEs in the realm of weak Galerkin finite element methods.

The paper is organized as follows. In Section 2, we present the primal-dual weak Galerkin algorithm for solving the first-order linear convection problem in non-divergence form. In Section 3, we shall prove the existence and uniqueness for the numerical solutions. In Section 4, we shall derive an error equation for the PD-WG finite element solutions. In Sections 5 and 6, we shall derive some optimal order error estimates for the PD-WG approximations in some discrete Sobolev norms. In Section 7, we report some numerical results to demonstrate the efficiency, stability, and accuracy of the new PD-WG method.

Throughout the paper, we follow the usual notation for Sobolev spaces and norms. For any open bounded domain $D\subset \mathbb{R}^d$ ($d$-dimensional Euclidean space) with Lipschitz continuous boundary, we use $\|\cdot\|_{s,D}$ and $|\cdot|_{s,D}$ to denote the norm and seminorm in the Sobolev space $H^s(D)$ for any $s\ge 0$, respectively. The inner product in
$H^s(D)$ is denoted by $(\cdot,\cdot)_{s, D}$. The space $H^0(D)$ coincides with $L^2(D)$, for which the norm and the inner product are denoted by $\|\cdot \|_{D}$ and $(\cdot,\cdot)_{D}$, respectively. When $D=\Omega$, or when the domain of integration is clear from the context, the subscript $D$ is dropped in the norm and inner product notations.

\section{Primal-Dual Weak Galerkin Algorithm}
\subsection{Discrete weak gradient}
Let $T$ be a polygonal or polyhedral domain with boundary $\partial T$. A weak function on $T$ is defined as a pair $v=\{v_0, v_b \}$ such that $v_0\in L^2(T)$ and $v_b\in L^{2}(\partial T)$. The first component $v_0$ can be viewed as the value of $v$ in the interior of $T$, while the second component $v_b$ represents $v$ on the boundary of $T$. In general, $v_b$ may not necessarily be the trace of $v_0$ on $\partial T$, though being the trace of $v_0$ on $\partial T$ would be viable option for $v_b$.
Denote by $\W(T)$ the local space of all weak functions on $T$; i.e.,
\begin{equation*}\label{2.1}
\W(T)=\{v=\{v_0, v_b\}: v_0 \in L^2(T), v_b \in L^{2}(\partial T)\}.
\end{equation*}

The weak gradient of $v\in \W(T)$, denoted by $\nabla_w v$, is defined as a linear functional in $[H^1(T)]^d$ such that
\begin{equation*}
(\nabla_w  v,\boldsymbol{\psi})_T :=-(v_0,\nabla \cdot \boldsymbol{\psi})_T+\langle v_b,\boldsymbol{\psi}\cdot \textbf{n}\rangle_{\partial T},\qquad \forall \boldsymbol{\psi}\in [H^1(T)]^d.
\end{equation*}
Denote by $P_r(T)$ the space of polynomials on $T$ with degree
$r$ and less. A discrete version of $\nabla_{w} v$  for $v\in \W(T)$, denoted as $\nabla_{w, r, T}v$, is defined as the unique polynomial vector in $[P_r(T) ]^d$ satisfying
\begin{equation}\label{disgradient}
(\nabla_{w, r, T} v, \boldsymbol{\psi})_T=-(v_0, \nabla \cdot \boldsymbol{\psi})_T+\langle v_b, \boldsymbol{\psi} \cdot \textbf{n}\rangle_{\partial T}, \quad\forall\boldsymbol{\psi}\in [P_r(T)]^d.
\end{equation}
From the integration by parts, we may rewrite (\ref{disgradient}) as follows
\begin{equation}\label{disgradient*}
(\nabla_{w, r, T} v, \boldsymbol{\psi})_T= (\nabla v_0, \boldsymbol{\psi})_T-\langle v_0- v_b, \boldsymbol{\psi} \cdot \textbf{n}\rangle_{\partial T}, \quad\forall\boldsymbol{\psi}\in [P_r(T)]^d,
\end{equation}
provided that $v_0\in H^1(T)$.

\subsection{Numerical algorithm}\label{Section:WGFEM}
Let ${\cal T}_h$ be a partition of the domain $\Omega$ into polygons in 2D or polyhedra in 3D which is shape regular in the sense of \cite{wy3655}. Denote by ${\mathcal E}_h$ the set of all edges/faces in ${\cal T}_h$, and ${\mathcal E}_h^0={\mathcal E}_h \setminus \partial\Omega$ be the set of all interior edges/faces. Denote by $h_T$ the size of $T\in {\cal T}_h$ and
$h=\max_{T\in {\cal T}_h}h_T$ the meshsize of the partition ${\cal T}_h$. For any piecewise smooth function $\phi$  with respect to the partition $\T_h$, denote by $\ljump \phi\rjump$ the jump of $\phi$ along the interior edge/face $e$ given by
$$
\ljump \phi\rjump =\phi_1 \bn_1+\phi_2\bn_2,
$$
where $\phi_i:=\phi|_{T_i}$, and $\bn_i$ is the unit outward normal direction on $e=\partial T_1\cap \partial T_2$ pointing exterior to the element $T_i,\ i=1, 2$.

For any given integer $k\geq 1$, denote by $W_k(T)$ the local space of discrete weak functions; i.e.,
$$
W_k(T)=\{\{\sigma_0,\sigma_b\}:\sigma_0\in P_k(T),\sigma_b\in P_k(e), e\subset \partial T\}.
$$
Patching $W_k(T)$ over all the elements $T\in {\cal T}_h$ through a common value $v_b$ on the interior interface $\E_h^0$, we arrive at a global weak finite element space $W_h$. Denote by $W_h^{0}$ the subspace of $W_h$ with vanishing boundary values on $\Gamma_-$; i.e.,
$$
W_h^{0}=\{\{\sigma_0, \sigma_b\}\in W_h: \sigma_b|_{e}=0, e\subset \Gamma_-\}.
$$
Next, let $M_h$ be the finite element space of piecewise polynomials of degree $k-1$; i.e.,
$$
M_h=\{w: w|_T\in P_{k-1}(T), \forall T\in {\cal T}_h\}.
$$

For simplicity of notation and without confusion, for any $\sigma\in W_h$, denote by $\nabla_{w}\sigma$ the discrete weak gradient
$\nabla_{w, k-1, T}\sigma$ computed  by
(\ref{disgradient}) on each element $T$; i.e.,
$$
(\nabla_{w}\sigma)|_T= \nabla_{w, k-1, T}(\sigma|_T), \qquad \forall T\in \T_h.
$$
In the space $W_h$ and $M_h$, we introduce the following bilinear forms
\begin{equation}\label{stab1}
\begin{split}
s(\rho, \sigma)=&\sum_{T\in {\cal T}_h}\int_{\partial T}h_T^{-1} (\rho_0-\rho_b)(\sigma_0-\sigma_b)ds\\
&+\tau_1 \int_{ T} (\bbeta\cdot \nabla \rho_0-c \rho_0) (\bbeta\cdot\nabla \sigma_0-c\sigma_0)dT,
\end{split}
\end{equation}
\begin{equation*}
b(\sigma, v)= \sum_{T\in {\cal T}_h}(\bbeta \cdot \nabla_w \sigma-c\sigma_0, v)_T,
\end{equation*}
where $\rho, \ \sigma\in W_h$,  $v\in M_h$, and $\tau_1 \geq 0$ is a parameter.

We are now in a position to state the numerical scheme for the first-order linear convection problem (\ref{model}) in the framework of primal-dual weak Galerkin as follows:
\begin{algorithm}\label{PDWG1}
Find $(\lambda_h, u_h) \in W_h \times M_{h}$, such that $\lambda_b|_e = Q_b (g|_e)$ for all edge/face $e\subset \Gamma_-$ and satisfying
\begin{eqnarray}\label{al-general}
 &&s(\lambda_h, \sigma)+b(\sigma, u_h)=\sum_{T\in {T_h}}\tau_1 (f, \bbeta\cdot \nabla \sigma_0-c\sigma_0)_T, \quad  \forall\sigma\in W_{h}^{0}, \\
&&-\tau_2 \sum_{T\in {\cal T}_h} h_T^2(u_h, v)_T+ b(\lambda_h, v)=(f, v),\qquad \forall v\in M_h,\label{al-general-2}
\end{eqnarray}
where $\tau_2>0$ is a parameter and $Q_b$ is the local $L^2$ projection operator into $P_k(e)$.
\end{algorithm}

\section{Solution Existence and Uniqueness}\label{Section:EU}
On each element $T\in\T_h$, let $Q_0$ be the $L^2$ projection onto $P_k(T)$, where $k\geq 1$ is any given integer. On each edge/face $e\subset\partial T$, let $Q_b$ be the $L^2$ projection operator onto $P_{k}(e)$. For any $w\in H^1(\Omega)$, denote by $Q_h w$ the $L^2$ projection into the weak finite element space $W_h$ such that
$$
(Q_hw)|_T:=\{Q_0(w|_T),Q_b(w|_{\pT})\},\qquad \forall T\in\T_h.
$$
Denote by ${\cal Q}_h$ the $L^2$ projection operator onto the finite element space $M_h$.

\begin{lemma}\label{Lemma5.1} \cite{wy3655} The $L^2$ projection operators $Q_h$ and ${\cal Q}_h$ satisfy the following commutative property:
\begin{equation}\label{l}
\nabla_{w}(Q_h w) = {\cal Q} _h(\nabla w), \qquad \forall w\in H^1(T).
\end{equation}
\end{lemma}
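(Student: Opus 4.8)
The plan is to verify the identity \eqref{l} by testing both sides against an arbitrary polynomial vector $\boldsymbol{\psi}\in [P_{k-1}(T)]^d$ and invoking the defining relation \eqref{disgradient} together with the definition of the $L^2$ projection $\mathcal{Q}_h$. Concretely, fix $w\in H^1(T)$ and note that $Q_h w = \{Q_0 w, Q_b w\}$ on $T$. Since $\nabla_w(Q_h w) = \nabla_{w,k-1,T}(Q_h w) \in [P_{k-1}(T)]^d$ by construction, it suffices to show that $(\nabla_{w,k-1,T}(Q_h w), \boldsymbol{\psi})_T = (\mathcal{Q}_h(\nabla w), \boldsymbol{\psi})_T$ for all $\boldsymbol{\psi}\in [P_{k-1}(T)]^d$, because both quantities lie in the same polynomial space $[P_{k-1}(T)]^d$.

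First I would expand the left-hand side using \eqref{disgradient} with $r = k-1$ and $v = Q_h w = \{Q_0 w, Q_b w\}$:
\begin{equation*}
(\nabla_{w,k-1,T}(Q_h w), \boldsymbol{\psi})_T = -(Q_0 w, \nabla\cdot\boldsymbol{\psi})_T + \langle Q_b w, \boldsymbol{\psi}\cdot\mathbf{n}\rangle_{\partial T}.
\end{equation*}
Next, since $\nabla\cdot\boldsymbol{\psi}\in P_{k-2}(T)\subset P_k(T)$ and $\boldsymbol{\psi}\cdot\mathbf{n}|_e\in P_{k-1}(e)\subset P_k(e)$ for each edge/face $e\subset\partial T$, the defining properties of the $L^2$ projections $Q_0$ and $Q_b$ give $(Q_0 w, \nabla\cdot\boldsymbol{\psi})_T = (w, \nabla\cdot\boldsymbol{\psi})_T$ and $\langle Q_b w, \boldsymbol{\psi}\cdot\mathbf{n}\rangle_{\partial T} = \langle w, \boldsymbol{\psi}\cdot\mathbf{n}\rangle_{\partial T}$. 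Hence the right-hand side becomes $-(w,\nabla\cdot\boldsymbol{\psi})_T + \langle w, \boldsymbol{\psi}\cdot\mathbf{n}\rangle_{\partial T}$, which by ordinary integration by parts (valid since $w\in H^1(T)$) equals $(\nabla w, \boldsymbol{\psi})_T$. Finally, because $\boldsymbol{\psi}\in[P_{k-1}(T)]^d$, the definition of $\mathcal{Q}_h$ yields $(\nabla w, \boldsymbol{\psi})_T = (\mathcal{Q}_h(\nabla w), \boldsymbol{\psi})_T$, completing the chain of equalities.

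This argument is essentially routine; there is no serious obstacle. The only points requiring minor care are the polynomial degree bookkeeping — checking that $\nabla\cdot\boldsymbol{\psi}$ and $\boldsymbol{\psi}\cdot\mathbf{n}$ land in the spaces $P_k(T)$ and $P_k(e)$ onto which $Q_0$ and $Q_b$ project, which holds comfortably since the weak functions use degree $k$ while the discrete weak gradient uses degree $k-1$ — and the observation that testing against $[P_{k-1}(T)]^d$ is enough to conclude equality of two vectors already known to reside in $[P_{k-1}(T)]^d$. The result is exactly the commutativity property established in \cite{wy3655}, and the proof above reproduces its standard derivation.
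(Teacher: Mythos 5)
Your proof is correct and is exactly the standard derivation of this commutativity identity: the paper itself omits the proof and simply cites \cite{wy3655}, where the argument proceeds precisely as you describe (test against $\boldsymbol{\psi}\in[P_{k-1}(T)]^d$, strip the projections $Q_0$ and $Q_b$ using the degree bookkeeping, integrate by parts, and invoke the definition of ${\cal Q}_h$). No gaps; the degree checks and the observation that both sides lie in $[P_{k-1}(T)]^d$ are exactly the points that need care, and you handled them.
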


For simplicity of analysis, assume that the convection vector $\bbeta$ and the reaction coefficient $c$ are piecewise smooth functions with respect to the finite element partition $\T_h$ in the rest of the paper.

\begin{theorem}\label{thmunique1} Assume that the first-order linear convection problem \eqref{model} has a unique solution. The primal-dual weak Galerkin algorithm (\ref{al-general})-(\ref{al-general-2}) has a unique solution for any parameter $\tau_1>0$.
\end{theorem}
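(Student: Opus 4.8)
Since the system \eqref{al-general}--\eqref{al-general-2} is a square linear system (finite-dimensional, same number of unknowns as equations, as $W_h^0$ and $M_h$ are the trial/test spaces of the respective equations), it suffices to show uniqueness; existence then follows automatically. So the plan is to take $f = 0$, $g = 0$, and show the only solution is $(\lambda_h, u_h) = (0, 0)$. With $f=0$ and $g=0$ the boundary condition becomes $\lambda_b|_e = 0$ on $\Gamma_-$, so $\lambda_h \in W_h^0$, which makes it a legitimate test function in the first equation.

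First I would test \eqref{al-general} with $\sigma = \lambda_h$ (legal since $\lambda_h \in W_h^0$) and test \eqref{al-general-2} with $v = u_h$. With $f = 0$ this yields
\begin{equation*}
s(\lambda_h, \lambda_h) + b(\lambda_h, u_h) = 0, \qquad -\tau_2 \sum_{T\in\T_h} h_T^2 (u_h, u_h)_T + b(\lambda_h, u_h) = 0.
\end{equation*}
Subtracting the two gives $s(\lambda_h, \lambda_h) + \tau_2 \sum_{T} h_T^2 \|u_h\|_T^2 = 0$. Since $\tau_1 > 0$ and $\tau_2 > 0$, both terms are nonnegative, hence each vanishes: from the $\tau_2$-term, $u_h \equiv 0$ on every $T$; from $s(\lambda_h,\lambda_h) = 0$ we get $\rho_0 = \rho_b$ on each $\partial T$ (so $\lambda_h$ is, in effect, a single-valued $C^0$ piecewise polynomial) and, crucially, $\bbeta\cdot\nabla\lambda_0 - c\lambda_0 = 0$ on each $T\in\T_h$.

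Next I would upgrade the local relations to the statement that $\lambda_0$ solves the homogeneous convection problem globally. The conditions $\lambda_0 = \lambda_b$ across interior interfaces mean $\lambda_0$ has no jumps, i.e. $\lambda_0 \in H^1(\Omega)$ (piecewise polynomial with matching traces), and $\lambda_b = 0$ on $\Gamma_-$ together with continuity gives $\lambda_0 = 0$ on $\Gamma_-$. Combined with $\bbeta\cdot\nabla\lambda_0 - c\lambda_0 = 0$ in each element — hence a.e. in $\Omega$ — we conclude that $\lambda_0$ is a solution of the original problem \eqref{model} with $f = 0$ and $g = 0$. By the assumed uniqueness of the solution to \eqref{model}, $\lambda_0 \equiv 0$. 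Then $\lambda_b = \lambda_0|_{\partial T} = 0$ as well, so $\lambda_h = 0$, and we already have $u_h = 0$. This proves uniqueness, and therefore existence and uniqueness, for any $\tau_1 > 0$ (note $\tau_2 > 0$ is built into the algorithm).

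The one step that needs a little care — and which I expect to be the main (mild) obstacle — is the transition from ``$\lambda_0 = \lambda_b$ on each $\partial T$ and $\lambda_b = 0$ on $\Gamma_-$'' to ``$\lambda_0 \in H^1(\Omega)$ with $\lambda_0 = 0$ on $\Gamma_-$ in the trace sense.'' This uses that $W_h$ is patched through a single-valued $\sigma_b$ on $\E_h^0$: the shared $\lambda_b$ forces the interior traces of $\lambda_0$ from the two sides of each interior face to coincide, which is exactly the condition for a piecewise polynomial to lie in $H^1(\Omega)$. One should also make sure that the argument that $b(\lambda_h,u_h)$ cancels in the subtraction is clean — it is, since the same bilinear form value appears in both tested equations — and that the invocation of the uniqueness hypothesis for \eqref{model} is legitimate, which it is because $\lambda_0$ has exactly the regularity ($H^1_\bbeta(\Omega)$, in fact $H^1(\Omega)$) required there.
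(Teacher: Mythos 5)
Your proposal is correct and follows essentially the same route as the paper's proof: test with $\sigma=\lambda_h$ and $v=u_h$ in the homogeneous system, combine to get $s(\lambda_h,\lambda_h)+\tau_2\sum_T h_T^2\|u_h\|_T^2=0$, conclude $u_h=0$, $\lambda_0=\lambda_b$ on each $\partial T$, and $\bbeta\cdot\nabla\lambda_0-c\lambda_0=0$ (using $\tau_1>0$), and then invoke the assumed uniqueness for \eqref{model}. Your added remarks on the square-system reduction and the trace-matching that gives $\lambda_0\in C^0(\Omega)$ are consistent with, and slightly more explicit than, the paper's argument.
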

\begin{proof} It suffices to show that the homogeneous problem of (\ref{al-general})-(\ref{al-general-2}) has only the trivial solution. To this end, assume $f=0$ and $g=0$. By choosing $v=u_h$ and $\sigma=\lambda_h$ in (\ref{al-general})-(\ref{al-general-2}) we arrive at
$$
s(\lambda_h, \lambda_h)+\tau_2\sum_{T\in {\cal T}_h} h_T^2(u_h, u_h)_T=0,
$$
which implies $\lambda_0=\lambda_b$ on each $\partial T$, $\bbeta \cdot \nabla \lambda_0-c\lambda_0=0$, and $u_h=0$ on each element $T$. We thus obtain $\lambda_0\in C^0(\Omega )$ and furthermore $\bbeta \cdot  \nabla \lambda_0-c\lambda_0=0$ in $\Omega$, which, together with $\lambda_0=\lambda_b=0$ on $\Gamma_-$, yields $\lambda_0\equiv 0$ in $\Omega$ from the solution uniqueness of the model problem (\ref{model}). From $\lambda_0=\lambda_b$ on each  $\partial T$, we have $\lambda_b\equiv0$ in $\Omega$ so that $\lambda_h\equiv 0$ in $\Omega$. Additionally, it follows from $u_h\in M_h$ and  $u_h=0$ on each element $T$ that $u_h \equiv 0$ in $\Omega$.
This completes the proof of the theorem.
\end{proof}

The first-order linear convection problem \eqref{model} with homogeneous boundary condition (i.e., $g=0$) is said to satisfy a local $H^1$-regularity assumption if there exists a non-overlapping partition of the domain $\Omega = \bigcup_{i=1}^J \Omega_i$ such that the solution $\lambda$ exists, $\lambda|_{\Omega_i}\in H^1(\Omega_i)$ for $i=1, \cdots, J$, and
\begin{equation}\label{regula}
\Big(\sum_{i=1}^J\|\lambda\|^2_{1, \Omega_i}\Big)^{\frac{1}{2}}\leq C\|f\|_0,
\end{equation}
where $C$ is a generic constant.

\begin{theorem}\label{thmunique2}
Assume that
%Assume that the {\color{blue}homogeneous} linear hyperbolic model problem (\ref{model}) has the $H^{\alpha} (\frac{1}{2}<\alpha\leq1)$ regularity in the sense that
%\begin{equation}\label{regula}
%\|\lambda\|_{\alpha} \leq C\|f\|.
%\end{equation}
The primal-dual weak Galerkin algorithm  (\ref{al-general})-(\ref{al-general-2}) has a unique solution when the parameter $\tau_1=0$ is taken, provided that the meshsize $h$ is sufficiently small such that $h<h_0$ for a fixed but sufficiently small $h_0$.
\end{theorem}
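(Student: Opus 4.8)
The plan is to adapt the energy argument of Theorem \ref{thmunique1} and then bootstrap with the local $H^1$-regularity assumption \eqref{regula}. The new difficulty when $\tau_1=0$ is that the stabilizer $s(\cdot,\cdot)$ no longer contains the least-squares term, so testing against $\lambda_h$ does \emph{not} yield $\bbeta\cdot\nabla\lambda_0-c\lambda_0=0$ for free as in Theorem \ref{thmunique1}; this identity has to be recovered approximately and then promoted to an exact one once $h$ is small. Since \eqref{al-general}--\eqref{al-general-2} is a square linear system on the finite-dimensional space $W_h\times M_h$, it suffices to prove that $f=0$ and $g=0$ force $(\lambda_h,u_h)=(0,0)$.

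First I would take $\sigma=\lambda_h$ — legitimate since $\lambda_b|_e=Q_b(g|_e)=0$ on $\Gamma_-$, so $\lambda_h\in W_h^0$ — and $v=u_h$, and subtract the two equations. With $\tau_1=0$ the right-hand side of \eqref{al-general} vanishes and $(f,u_h)=0$, yielding
\[
s(\lambda_h,\lambda_h)+\tau_2\sum_{T\in\T_h}h_T^2\|u_h\|_{0,T}^2=0 .
\]
Since $s(\lambda_h,\lambda_h)=\sum_{T\in\T_h}h_T^{-1}\|\lambda_0-\lambda_b\|_{0,\partial T}^2\ge0$ when $\tau_1=0$ and $\tau_2>0$, we get $u_h\equiv0$ and $\lambda_0=\lambda_b$ on every $\partial T$. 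Hence $\lambda_0$ is continuous across the interior edges/faces, so $\lambda_0\in H^1(\Omega)$, and $\lambda_0=\lambda_b=0$ on $\Gamma_-$. Moreover $\lambda_h=Q_h\lambda_0$ (indeed $Q_0$ and $Q_b$ act as the identity on the polynomials $\lambda_0$ and $\lambda_b=\lambda_0|_{\partial T}$), so Lemma \ref{Lemma5.1} gives $\nabla_w\lambda_h=\Q_h(\nabla\lambda_0)=\nabla\lambda_0$ on each $T$, the last equality because $\nabla\lambda_0|_T\in[P_{k-1}(T)]^d$. Inserting $u_h\equiv0$ into \eqref{al-general-2} then yields $(\bbeta\cdot\nabla\lambda_0-c\lambda_0,v)=0$ for all $v\in M_h$, i.e. $\Q_h(\bbeta\cdot\nabla\lambda_0-c\lambda_0)=0$.

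The crux is to bound the residual $R:=\bbeta\cdot\nabla\lambda_0-c\lambda_0\in L^2(\Omega)$. On each $T$ set $\bar{\bbeta}|_T:=\bbeta(x_T)$, the elementwise-constant value of $\bbeta$; since $\nabla\lambda_0|_T\in[P_{k-1}(T)]^d$, the vector $\bar{\bbeta}\cdot\nabla\lambda_0|_T$ lies in $M_h$ and is therefore annihilated by $I-\Q_h$. Using $\Q_h R=0$, this gives on each element
\[
R|_T=(I-\Q_h)\bigl((\bbeta-\bar{\bbeta})\cdot\nabla\lambda_0\bigr)-(I-\Q_h)(c\lambda_0),
\]
and since $\bbeta$ and $c$ are piecewise smooth with respect to $\T_h$ and $\Q_h$ reproduces constants on each $T$, routine approximation estimates give $\|R\|_{0,T}\le Ch_T\|\lambda_0\|_{1,T}$, hence $\|R\|_0\le Ch\|\lambda_0\|_{1,\Omega}$. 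I expect this to be the main obstacle: estimating $R$ by testing directly against a projection of $\bbeta\cdot\nabla\lambda_0$ would require an inverse inequality to control $|\bbeta\cdot\nabla\lambda_0|_{1,T}$, which eats precisely the power of $h$ one is trying to gain and reduces the bound to the tautology $\|R\|_0\le C\|R\|_0$; subtracting the elementwise constant $\bar{\bbeta}$ is what breaks this circularity, since it replaces the troublesome part of $\bbeta\cdot\nabla\lambda_0$ by a quantity that is already $O(h)$ in $L^2$.

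To conclude, note that $\lambda_0\in H_{\bbeta}^1(\Omega)$ vanishes on $\Gamma_-$ and satisfies $\bbeta\cdot\nabla\lambda_0-c\lambda_0=R$ a.e., so by the assumed unique solvability of \eqref{model} it is the solution of \eqref{model} with load $R$ and homogeneous inflow data, and the local $H^1$-regularity \eqref{regula} gives
\[
\|\lambda_0\|_{1,\Omega}=\Bigl(\sum_{i=1}^{J}\|\lambda_0\|_{1,\Omega_i}^2\Bigr)^{1/2}\le C\|R\|_0\le Ch\|\lambda_0\|_{1,\Omega}.
\]
Fixing $h_0$ with $Ch_0<1$, for every $h<h_0$ this forces $\lambda_0\equiv0$, hence $\lambda_b=\lambda_0=0$ and $\lambda_h\equiv0$; together with $u_h\equiv0$ the homogeneous system has only the trivial solution, and since the system is square and finite-dimensional, existence and uniqueness of $(\lambda_h,u_h)$ follow.
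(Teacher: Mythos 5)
Your proposal is correct and follows essentially the same route as the paper: the energy identity with $\sigma=\lambda_h$, $v=u_h$ gives $u_h=0$ and $\lambda_0=\lambda_b$, whence $\nabla_w\lambda_h=\nabla\lambda_0$ and ${\cal Q}_h(\bbeta\cdot\nabla\lambda_0-c\lambda_0)=0$; the residual is then split using the elementwise average $\overline{\bbeta}$ (exactly the paper's device for breaking the circularity you describe) and absorbed via the local $H^1$-regularity assumption \eqref{regula} for $h$ small. The only cosmetic difference is that you bound $\|(I-{\cal Q}_h)((\bbeta-\overline{\bbeta})\cdot\nabla\lambda_0)\|_{0,T}$ by direct approximation, while the paper routes through the estimate \eqref{error2} with $m=1$ and an inverse inequality, ending with the equivalent condition $1-Ch^2>0$ rather than $Ch<1$.
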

\begin{proof} It suffices to show that the homogeneous problem of (\ref{al-general})-(\ref{al-general-2}) has only the trivial solution. To this end, assume $f=0$ and $g=0$. As $\tau_1=0$, by choosing $v=u_h$ and $\sigma=\lambda_h$ in (\ref{al-general})-(\ref{al-general-2}) we arrive at $$
s(\lambda_h, \lambda_h)+\tau_2\sum_{T\in {\cal T}_h} h_T^2(u_h, u_h)_T=0,
$$
which leads to $\lambda_0=\lambda_b$ on each $\partial T$ and $u_h=0$ on each element $T\in {\cal T}_h$. It follows from (\ref{al-general-2}) that
\begin{equation*}
\begin{split}
0=&b(\lambda_h, v)\\
=& \sum_{T\in {\cal T}_h}  (\bbeta \cdot \nabla_w \lambda_h-c\lambda_0, v)_T\\
=& \sum_{T\in {\cal T}_h}  (\bbeta \cdot \nabla  \lambda_0-c\lambda_0, v)_T\\
=& \sum_{T\in {\cal T}_h}  ({\cal Q}_h(\bbeta \cdot \nabla  \lambda_0-c\lambda_0), v)_T\\
\end{split}
\end{equation*}
for all $v\in M_h$, where we have used $\nabla_w \lambda_h=\nabla  \lambda_0$ due to the fact that $\lambda_0=\lambda_b$ on each $\partial T$ plus the identity \eqref{disgradient*}.  Therefore, we obtain ${\cal Q}_h(\bbeta \cdot \nabla  \lambda_0-c\lambda_0)=0$ on each $T\in {\cal T}_h$ by letting $v={\cal Q}_h(\bbeta \cdot \nabla  \lambda_0-c\lambda_0)\in M_h$. From $\lambda_0=\lambda_b$ on each  $\partial T$ we have $\lambda\in C^0(\Omega)$. Thus,
\begin{equation}\label{au1}
\bbeta \cdot \nabla  \lambda_0-c\lambda_0=(I-{\cal Q}_h)(\bbeta \cdot \nabla  \lambda_0-c\lambda_0):=F, \quad \lambda_0|_{\Gamma_-}=0.
\end{equation}

Note that the convection coefficient $\bbeta$ is piecewise smooth with respect to the partition $\T_h$. Thus, from the regularity assumption (\ref{regula}), the estimate (\ref{error2}) with $m=1$ and $m=0$, the inverse inequality, and the equation (\ref{au1}), we obtain
\begin{equation*}
\begin{split}
 \left(\sum_{i=1}^J\|\lambda_0\|^2_{1, \Omega_i}\right)^{\frac{1}{2}} \leq  & C\|F\|_0\\
\leq &C\|(I-{\cal Q}_h)(\bbeta \cdot \nabla  \lambda_0-c\lambda_0)\|\\
\leq &C \|(I-{\cal Q}_h)((\bbeta-\overline{\bbeta}) \cdot \nabla  \lambda_0)\|+C \|(I-{\cal Q}_h)(c\lambda_0)\|\\
\leq &C h\Big(\sum_{T\in {\cal T}_h}\|(\bbeta-\overline{\bbeta})\nabla \lambda_0\|_{1,T}^2 + \|c\lambda_0\|_{1,T}^2\Big)^{\frac{1}{2}} \\
\leq &C h\Big(\sum_{T\in {\cal T}_h}\|\nabla \lambda_0\|_{0,T}^2 + \|\lambda_0\|_{1,T}^2\Big)^{\frac{1}{2}} \\
\leq &Ch \left(\sum_{i=1}^J\|\lambda_0\|^2_{1, \Omega_i}\right)^{\frac{1}{2}},
\end{split}
\end{equation*}
where $\overline{\bbeta}$ is the average of $\bbeta$ on the element $T\in {\cal T}_h$ satisfying the estimate $\|\bbeta-\overline{\bbeta}\|_{L^{\infty}(T)}\leq C h_T$. Thus, we have
$$
(1-Ch^2)\sum_{i=1}^J\|\lambda_0\|^2_{1, \Omega_i} \leq 0,
$$
which leads to $\lambda_0 \equiv 0$ in $\Omega_i,\ i=1, \cdots, J$, provided that the mesh size $h$ is sufficiently small such that $1-Ch^2>0$. This shows that $\lambda_0 \equiv 0$ in $\Omega$, and furthermore, $\lambda_b\equiv 0$ from the fact that $\lambda_b=\lambda_0$ on each  $\partial T$. This completes the proof of the theorem.
\end{proof}

\section{Error Equations}
Let $\lambda$ be the exact solution of the first-order linear convection problem (\ref{model}) and $(\lambda_h, u_h) \in W_{h} \times M_h$ be its numerical approximation arising from the scheme (\ref{al-general})-(\ref{al-general-2}). We define the following two error functions
\begin{align}\label{error}
\epsilon_h&=\lambda_h-Q_h\lambda, \\
e_h&=u_h-{\cal Q} _hu=u_h. \label{error-2}
\end{align}
Note that the exact solution to the dual equation is the trivial function $u=0$.

\begin{lemma}\label{errorequa}
The error functions $\epsilon_h$  and $e_h$ given in (\ref{error})-(\ref{error-2}) satisfy the following error equations:
\begin{eqnarray}\label{sehv}
s(\epsilon_h , \sigma)+b( \sigma, e_h)&=& \ell_{\lambda}(\sigma),\qquad \forall\sigma\in W^{0}_{h },\\
-\tau_2 \sum_{T\in {\cal T}_h}h_T^2 (e_h, v)_T+b(\epsilon_h, v)&=&\zeta_{\lambda}(v),\qquad \forall v\in M_h. \label{sehv2}
\end{eqnarray}
\end{lemma}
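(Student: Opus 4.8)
The plan is to derive the two error equations by plugging the decompositions $\lambda_h = \epsilon_h + Q_h\lambda$ and $u_h = e_h$ into Algorithm \ref{PDWG1} and subtracting off the consistency terms that come from testing the original PDE against discrete functions. Concretely, I would start from \eqref{al-general}: by linearity, $s(\lambda_h,\sigma) = s(\epsilon_h,\sigma) + s(Q_h\lambda,\sigma)$ and $b(\sigma,u_h) = b(\sigma,e_h)$, so the first error equation will read $s(\epsilon_h,\sigma) + b(\sigma,e_h) = \sum_{T}\tau_1(f,\bbeta\cdot\nabla\sigma_0 - c\sigma_0)_T - s(Q_h\lambda,\sigma)$, and I would \emph{define} $\ell_\lambda(\sigma)$ to be the right-hand side. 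The real content is then to rewrite $\ell_\lambda(\sigma)$ in a usable closed form using the structure of $s(\cdot,\cdot)$ from \eqref{stab1}: the boundary part $\sum_T \int_{\partial T} h_T^{-1}(Q_0\lambda - Q_b\lambda)(\sigma_0 - \sigma_b)\,ds$ stays as a genuine error term, while for the volumetric least-squares part $\tau_1\int_T (\bbeta\cdot\nabla Q_0\lambda - cQ_0\lambda)(\bbeta\cdot\nabla\sigma_0 - c\sigma_0)\,dT$ I would combine it with the $\tau_1(f,\cdot)$ term and use the PDE $\bbeta\cdot\nabla\lambda - c\lambda = f$ to write the combination as $\tau_1\int_T\big(f - (\bbeta\cdot\nabla Q_0\lambda - cQ_0\lambda)\big)(\bbeta\cdot\nabla\sigma_0 - c\sigma_0)\,dT = \tau_1\int_T\big(\bbeta\cdot\nabla(\lambda - Q_0\lambda) - c(\lambda - Q_0\lambda)\big)(\bbeta\cdot\nabla\sigma_0 - c\sigma_0)\,dT$, i.e.\ a term controlled by the projection error $\lambda - Q_0\lambda$.

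For the second equation I would proceed analogously from \eqref{al-general-2}. Writing $b(\lambda_h,v) = b(\epsilon_h,v) + b(Q_h\lambda,v)$ and $u_h = e_h$, subtracting gives $-\tau_2\sum_T h_T^2(e_h,v)_T + b(\epsilon_h,v) = (f,v) - b(Q_h\lambda,v)$, and I would define $\zeta_\lambda(v)$ as this right-hand side. The key manipulation here is to evaluate $b(Q_h\lambda,v) = \sum_T (\bbeta\cdot\nabla_w Q_h\lambda - cQ_0\lambda, v)_T$: by the commutativity property in Lemma \ref{Lemma5.1}, $\nabla_w Q_h\lambda = \mathcal{Q}_h(\nabla\lambda)$ on each $T$, so since $v\in M_h$ consists of polynomials of degree $k-1$ and $\bbeta$ is only piecewise smooth, I would write $(\bbeta\cdot\mathcal{Q}_h(\nabla\lambda),v)_T$ and compare it with $(\bbeta\cdot\nabla\lambda,v)_T = (f + c\lambda,v)_T$ from the PDE; the difference $(\bbeta\cdot(\mathcal{Q}_h(\nabla\lambda) - \nabla\lambda),v)_T$ together with $((c\lambda - cQ_0\lambda),v)_T = (c(\lambda - Q_0\lambda),v)_T$ forms $\zeta_\lambda(v)$, again expressed purely through $L^2$-projection errors of $\lambda$.

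The main obstacle — really the only subtle point — is bookkeeping the consistency terms carefully so that $\ell_\lambda$ and $\zeta_\lambda$ come out expressed entirely in terms of projection errors (and hence will be small), rather than in terms of $\lambda$ itself. In particular one must be careful that $\bbeta$ and $c$ are only \emph{piecewise} smooth, so $\bbeta\cdot\mathcal{Q}_h(\nabla\lambda)$ cannot be simplified by pulling $\bbeta$ outside $\mathcal{Q}_h$; the honest move is to keep $\mathcal{Q}_h(\nabla\lambda) - \nabla\lambda = -(I - \mathcal{Q}_h)\nabla\lambda$ as the error-generating quantity. One should also double-check the sign conventions and the use of \eqref{disgradient*} versus \eqref{disgradient} (the former requires $v_0\in H^1(T)$, which holds for $Q_0\lambda$). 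Once these identifications are made, the statement of Lemma \ref{errorequa} follows immediately by definition of $\ell_\lambda$ and $\zeta_\lambda$; I would expect the explicit formulas for these two functionals to be displayed right after the proof (or within it) so that they can be estimated in the subsequent sections.
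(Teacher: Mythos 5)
Your proposal is correct and follows essentially the same route as the paper: subtract $s(Q_h\lambda,\sigma)$ and $b(Q_h\lambda,v)$ from the two scheme equations, invoke the PDE $\bbeta\cdot\nabla\lambda-c\lambda=f$ together with the commutativity $\nabla_w Q_h\lambda={\cal Q}_h\nabla\lambda$ of Lemma \ref{Lemma5.1}, and collect the residuals into $\ell_\lambda$ and $\zeta_\lambda$ expressed through the projection errors $\lambda-Q_0\lambda$, $Q_0\lambda-Q_b\lambda$, and $(I-{\cal Q}_h)\nabla\lambda$. The only caveat is the sign bookkeeping you already flag: the paper's $\zeta_\lambda(v)$ carries $(I-{\cal Q}_h)\nabla\lambda$, i.e.\ $\nabla\lambda-{\cal Q}_h\nabla\lambda$, whereas your prose has the opposite orientation, so make sure the final displayed formulas match \eqref{sigml}--\eqref{lv}.
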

Here,
\begin{equation}\label{sigml}
\begin{split}
\ell_{\lambda}(\sigma)= &\sum_{T\in {\cal T}_h} \tau_1 (\bbeta \cdot \nabla (\lambda-Q_0\lambda)-c(\lambda-Q_0\lambda), \bbeta \cdot \nabla \sigma_0-c\sigma_0)_T    \\&-  h_T^{-1}\langle Q_0\lambda-Q_b \lambda, \sigma_0-\sigma_b\rangle_{\partial T},
\end{split}
\end{equation}
\begin{equation}\label{lv}
\qquad \zeta_{\lambda}(v) =\sum_{T\in {\cal T}_h} (\bbeta \cdot (I-{\cal Q}_h)\nabla \lambda-c(\lambda-Q_0\lambda),  v)_T.
\end{equation}

\begin{proof}
From (\ref{al-general-2}) and the commutative property (\ref{l}) we have
\begin{equation*}
\begin{split}
&\quad -\tau_2 \sum_{T\in {\cal T}_h} h_T^2(u_h-{\cal Q}_h u, v)_T+b(\lambda_h-Q_h\lambda, v)\\
& = (f, v)-b(Q_h\lambda, v)\\
& = (f, v)- \sum_{T\in {\cal T}_h}(\bbeta\cdot \nabla_w Q_h\lambda-cQ_0\lambda, v)_T \\
& = (\bbeta \cdot \nabla \lambda-c\lambda, v) - \sum_{T\in {\cal T}_h}(\bbeta\cdot {\cal Q}_h\nabla \lambda-cQ_0\lambda, v)_T\\
& = \sum_{T\in {\cal T}_h} (\bbeta \cdot (I-{\cal Q}_h)\nabla \lambda-c(\lambda-Q_0\lambda),  v)_T,
\end{split}
\end{equation*}
where we used the first equation in (\ref{model}), which gives (\ref{sehv2}). To derive \eqref{sehv}, we subtract $s(Q_h\lambda, \sigma)$ from both sides of (\ref{al-general}) to obtain
\begin{equation*}
\begin{split}
& s( \lambda_h-Q_h\lambda, \sigma)+b(\sigma, u_h-{\cal Q}_hu)  \\
=&  \sum_{T\in {\cal T}_h} \tau_1   (f, \bbeta \cdot \nabla \sigma_0-c\sigma_0)_T-s(Q_h\lambda, \sigma) \\
 =&  \sum_{T\in {\cal T}_h} \tau_1 (\bbeta \cdot \nabla \lambda-c\lambda, \bbeta \cdot \nabla \sigma_0-c\sigma_0)_T
  -  h_T^{-1}\langle Q_0\lambda-Q_b \lambda, \sigma_0-\sigma_b\rangle_{\partial T} \\
  &-\tau_1 (\bbeta \cdot \nabla Q_0\lambda-cQ_0\lambda, \bbeta \cdot \nabla  \sigma_0-c\sigma_0)_T \\
                = & \sum_{T\in {\cal T}_h} \tau_1 (\bbeta \cdot \nabla (\lambda-Q_0\lambda)-c(\lambda-Q_0\lambda), \bbeta \cdot \nabla \sigma_0-c\sigma_0)_T    \\
                &-  h_T^{-1}\langle Q_0\lambda-Q_b \lambda, \sigma_0-\sigma_b\rangle_{\partial T}, \\
  \end{split}
\end{equation*}
which verifies the error equation \eqref{sehv}. This completes the proof of the lemma.
\end{proof}

\section{Error Estimates}
We first introduce a scaled $L^2$ norm in the finite element space $M_h$ as follows:
\begin{equation}\label{mhnorm}
\3bar v\3bar _{M_h}= \Big( \tau_2 \sum_{T\in {\cal T}_h}    h_T^2\| v\|_T^2  \Big)^{\frac{1}{2}}, \qquad v\in M_h,
\end{equation}
where $\tau_2 >0$ is a given parameter. Next, we introduce a semi-norm in the weak finite element space $W_h$:
\begin{equation}\label{whnorm}
\3bar \lambda \3bar_{W_h}= \Big(\sum_{T\in {\cal T}_h} h_T^{-1} \| \lambda_0-\lambda _b \|_{\partial T}^2+ \tau_1   \| \bbeta \cdot \nabla \lambda_0-c\lambda_0 \|_T^2\Big)^{\frac{1}{2}},
\end{equation}
where $\tau_1 \geq 0$ is another parameter.

\begin{lemma} Assume that the solution to the first-order linear convection problem in the non-divergence form \eqref{model} is unique. Then the seminorm $\3bar\cdot\3bar_{W_h}$ given in (\ref{whnorm}) defines a norm in the linear space $W_{h}^{0}$ for any given $\tau_1>0$.
\end{lemma}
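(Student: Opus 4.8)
The plan is to show that the only properties a seminorm lacks to be a norm are definiteness, so it suffices to prove that $\3bar\sigma\3bar_{W_h}=0$ for $\sigma=\{\sigma_0,\sigma_b\}\in W_h^0$ forces $\sigma\equiv 0$. From $\3bar\sigma\3bar_{W_h}=0$ and the structure of \eqref{whnorm} we immediately read off two facts: first, $\sigma_0=\sigma_b$ on $\partial T$ for every $T\in\T_h$, and second, since $\tau_1>0$, the pointwise identity $\bbeta\cdot\nabla\sigma_0-c\sigma_0=0$ holds on each element $T$.

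Next I would promote these local statements to global ones. The condition $\sigma_0=\sigma_b$ on every $\partial T$, together with the single-valuedness of $\sigma_b$ on the interior interfaces $\E_h^0$, forces $\sigma_0$ to be continuous across every interior edge/face, i.e.\ $\sigma_0\in C^0(\overline\Omega)$; being piecewise polynomial and globally continuous, $\sigma_0\in H^1(\Omega)$ and the elementwise gradient $\nabla\sigma_0$ agrees with the distributional gradient. Hence $\bbeta\cdot\nabla\sigma_0-c\sigma_0=0$ holds in $\Omega$ (in $L^2$), so $\sigma_0$ is a solution of the homogeneous convection equation $\bbeta\cdot\nabla\sigma_0-c\sigma_0=0$. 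Moreover, because $\sigma\in W_h^0$ we have $\sigma_b=0$ on every $e\subset\Gamma_-$, and combining with $\sigma_0=\sigma_b$ on $\partial T$ gives $\sigma_0=0$ on $\Gamma_-$. Thus $\sigma_0$ solves the model problem \eqref{model} with $f=0$ and $g=0$.

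By the assumed uniqueness of the solution to \eqref{model}, the only such $\sigma_0$ is $\sigma_0\equiv 0$ in $\Omega$. Then $\sigma_b=\sigma_0=0$ on each $\partial T$ yields $\sigma_b\equiv 0$ as well, so $\sigma\equiv 0$ in $W_h^0$. Together with the obvious homogeneity $\3bar t\sigma\3bar_{W_h}=|t|\,\3bar\sigma\3bar_{W_h}$ and the triangle inequality (both inherited from the $L^2$ structure of the two terms in \eqref{whnorm}), this establishes that $\3bar\cdot\3bar_{W_h}$ is a genuine norm on $W_h^0$.

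I expect the only subtle point to be the regularity argument in the second paragraph: one must be careful that $\sigma_0$ being piecewise polynomial and matching a common trace $\sigma_b$ on all shared faces really does force global $H^1$-regularity on the polytopal partition, and that the elementwise relation $\bbeta\cdot\nabla\sigma_0-c\sigma_0=0$ (which uses the elementwise gradient appearing in \eqref{whnorm}) then coincides with the equation in \eqref{model} interpreted globally. This is exactly the same mechanism already used in the proof of Theorem~\ref{thmunique1}, so the argument is short; everything else is routine verification of the seminorm axioms.
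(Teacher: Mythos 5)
Your proposal is correct and follows essentially the same route as the paper's own proof: reduce to positivity, use $\tau_1>0$ to extract both $\sigma_0=\sigma_b$ on each $\partial T$ and the elementwise equation $\bbeta\cdot\nabla\sigma_0-c\sigma_0=0$, conclude global continuity and hence that $\sigma_0$ solves the homogeneous problem \eqref{model} with zero inflow data, and invoke the assumed uniqueness. The extra care you take about $H^1$-regularity across interfaces is a welcome elaboration of a step the paper states without comment, but it does not change the argument.
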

\begin{proof} We shall only verify the positivity property for $\3bar \cdot  \3bar _{W_h}$.  To this end, assume $\3bar \lambda \3bar_{W_h}=0$ for some $\lambda =\{\lambda_0, \lambda_b\} \in W_{h}^{0}$. Since $\tau_1>0$, then from (\ref{whnorm}) we have $\lambda_0=\lambda_b$ on $\partial T$ and $\bbeta \cdot \nabla \lambda_0-c\lambda_0=0$ on any $T\in\T_h$. This implies $ \lambda_0\in C^0(\Omega)$ and $\bbeta \cdot \nabla  \lambda_0-c\lambda_0=0$ in $\Omega$. Thus, from $\lambda\in W_{h}^{0}$ and the solution uniqueness for the first-order linear convection problem (\ref{model}) we obtain $\lambda_0 \equiv 0$ and furthermore,  $\lambda_b=\lambda_0=0$. This completes the proof of the lemma.
\end{proof}

 Recall that ${\cal T}_h$ is a shape-regular finite element partition of the domain $\Omega$. Thus, for any $T\in {\cal T}_h$ and $\phi\in H^1(T)$,
 the following trace inequality holds true \cite{wy3655}:
\begin{equation}\label{tracein}
 \|\phi\|^2_{\partial T} \leq C(h_T^{-1}\|\phi\|_T^2+h_T \|\nabla \phi\|_T^2).
\end{equation}
If $\phi$ is a polynomial on the element $T\in {\cal T}_h$,  the following trace inequality holds true \cite{wy3655}; i.e.,
\begin{equation}\label{trace}
\|\phi\|^2_{\partial T} \leq Ch_T^{-1}\|\phi\|_T^2.
\end{equation}

\begin{lemma}\cite{wy3655}
Let ${\cal T}_h$ be a finite element partition of the domain $\Omega$ satisfying the shape regular assumption as specified in \cite{wy3655}. For any $0\leq s \leq 1$ and $0\leq m \leq k$, there holds
\begin{eqnarray}\label{error2}
 \sum_{T\in {\cal T}_h}h_T^{2s}\|u-{\cal Q}_hu\|^2_{s,T}&\leq& C h^{2m}\|u\|^2_{m},\\
\label{error3}
\sum_{T\in {\cal T}_h}h_T^{2s}\|\lambda- Q _0\lambda\|^2_{s,T}&\leq& C h^{2m+2}\|\lambda\|^2_{m+1}.
\end{eqnarray}
 \end{lemma}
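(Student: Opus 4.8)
The plan is to prove both estimates locally, element by element, and then sum over ${\cal T}_h$, using standard polynomial approximation theory adapted to polytopal meshes as developed in \cite{wy3655}. Two facts drive the argument: on each $T\in{\cal T}_h$, the operator ${\cal Q}_h|_T$ is the $L^2(T)$-orthogonal projection onto $P_{k-1}(T)$ and therefore reproduces every polynomial of degree $\le k-1$, while $Q_0$ is the $L^2(T)$-orthogonal projection onto $P_k(T)$ and reproduces every polynomial of degree $\le k$; moreover, both projections are $L^2(T)$-stable.

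First I would treat \eqref{error2}. Fix $T\in{\cal T}_h$, take $u\in H^m(T)$ with $0\le m\le k$, and let $s\in[0,1]$ with $s\le m$. Choosing a best $L^2(T)$-approximation $p\in P_{k-1}(T)$ and writing $u-{\cal Q}_h u=(u-p)-{\cal Q}_h(u-p)$, the reproduction property together with a Bramble--Hilbert estimate on the shape-regular (star-shaped) element $T$, combined with the $L^2$-stability and an inverse inequality for ${\cal Q}_h(u-p)$ when $s=1$, gives
\[
\|u-{\cal Q}_h u\|_{s,T}\le C\,h_T^{\,m-s}\,|u|_{m,T},
\]
with $C$ depending only on $k$ and the shape-regularity constants. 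Multiplying by $h_T^{2s}$ yields $h_T^{2s}\|u-{\cal Q}_h u\|^2_{s,T}\le C\,h_T^{2m}\,\|u\|^2_{m,T}\le C\,h^{2m}\,\|u\|^2_{m,T}$, and summation over $T\in{\cal T}_h$ gives \eqref{error2} (the degenerate range $m<s$ is handled by the inverse inequality together with $L^2$-stability, and does not occur in the applications of this paper).

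The estimate \eqref{error3} is obtained by repeating the argument verbatim with ${\cal Q}_h$ replaced by $Q_0$ and the reproduction degree raised from $k-1$ to $k$: for $\lambda\in H^{m+1}(T)$ with $0\le m\le k$ and $s\in[0,1]$,
\[
\|\lambda-Q_0\lambda\|_{s,T}\le C\,h_T^{\,m+1-s}\,|\lambda|_{m+1,T},
\]
so that $h_T^{2s}\|\lambda-Q_0\lambda\|^2_{s,T}\le C\,h_T^{2m+2}\,\|\lambda\|^2_{m+1,T}\le C\,h^{2m+2}\,\|\lambda\|^2_{m+1,T}$, and summing over all $T$ gives \eqref{error3}.

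The only genuine technical point — and the reason the lemma is quoted from \cite{wy3655} rather than from classical finite element references — is that the elements $T$ are arbitrary shape-regular polygons or polyhedra, so one cannot reduce to a single fixed reference cell by an affine change of variables. The Bramble--Hilbert step must therefore be carried out directly on polytopes, with the constant $C$ tracked through the shape-regularity parameters (for instance by covering $T$ with a ball of radius comparable to $h_T$ and invoking a polynomial averaging operator on that ball); this is precisely the approximation machinery established in \cite{wy3655}, which we simply invoke here.
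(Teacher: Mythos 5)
The paper offers no proof of this lemma --- it is quoted directly from \cite{wy3655} --- and your argument is precisely the standard one used there: polynomial reproduction plus $L^2$-stability of the projections, a Bramble--Hilbert estimate carried out directly on shape-regular polytopes, and summation over the elements, with the degree $k-1$ versus $k$ distinction correctly accounting for the extra power of $h$ in \eqref{error3}. Your parenthetical caveat about the range $m<s$ in \eqref{error2} is well taken (the estimate as literally stated cannot hold there, since $|u|_{1,T}$ is not controlled by $\|u\|_{0,T}$), but as you note the paper only ever invokes the lemma with $s\leq m$, so nothing is lost.
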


\begin{theorem} \label{theoestimate}  Let $\lambda$ and $(\lambda_h; u_h) \in W_h\times  M_h$ be the exact solution of the first-order linear convection problem (\ref{model}) and the primal-dual weak Galerkin solution arising from the numerical scheme (\ref{al-general})-(\ref{al-general-2}), respectively. Assume that the exact
solution $\lambda$ is sufficiently regular such that $\lambda \in \oplus_{i=1}^J H^{k+1}(\Omega_i)$ where $\{\Omega_i\}_{i=1}^J$ is a non-overlapping partition of the domain $\Omega$. There exists a constant $C$ such that
 \begin{equation}\label{erres}
  \3bar\epsilon_h\3bar_{W_h}+\3bar e_h \3bar_{M_h} \leq C (1+\tau_2^{-\frac{1}{2}})h^{k}\|\lambda\|_{k+1}.
\end{equation}
\end{theorem}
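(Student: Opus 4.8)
The plan is to test the error equations \eqref{sehv}--\eqref{sehv2} against the error functions themselves and combine them to extract control of $\3bar\epsilon_h\3bar_{W_h}$ and $\3bar e_h\3bar_{M_h}$. Concretely, I would set $\sigma=\epsilon_h$ in \eqref{sehv} and $v=e_h$ in \eqref{sehv2}, then subtract the second from the first. Since $\epsilon_h\in W_h^0$ (because $\lambda_b=Q_b(g)$ on $\Gamma_-$ so $Q_h\lambda$ and $\lambda_h$ share the same boundary value there), this is legitimate, and the coupling terms $b(\epsilon_h,e_h)$ cancel. What remains is
\begin{equation*}
s(\epsilon_h,\epsilon_h)+\tau_2\sum_{T\in\T_h}h_T^2(e_h,e_h)_T=\ell_\lambda(\epsilon_h)-\zeta_\lambda(e_h),
\end{equation*}
i.e. $\3bar\epsilon_h\3bar_{W_h}^2+\3bar e_h\3bar_{M_h}^2=\ell_\lambda(\epsilon_h)-\zeta_\lambda(e_h)$, using the definition of the two norms in \eqref{mhnorm}--\eqref{whnorm} together with the identity $s(\sigma,\sigma)=\3bar\sigma\3bar_{W_h}^2$. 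So everything reduces to bounding the two right-hand-side functionals by $Ch^k\|\lambda\|_{k+1}$ times the corresponding norm of $\epsilon_h$ (resp. $e_h$).

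Next I would estimate $\ell_\lambda(\epsilon_h)$ term by term using \eqref{sigml}. For the volumetric term $\sum_T\tau_1(\bbeta\cdot\nabla(\lambda-Q_0\lambda)-c(\lambda-Q_0\lambda),\bbeta\cdot\nabla\epsilon_{h,0}-c\epsilon_{h,0})_T$, Cauchy--Schwarz gives a factor $\big(\sum_T\tau_1\|\bbeta\cdot\nabla\epsilon_{h,0}-c\epsilon_{h,0}\|_T^2\big)^{1/2}\le\3bar\epsilon_h\3bar_{W_h}$ and a factor $\big(\sum_T\tau_1\|\bbeta\cdot\nabla(\lambda-Q_0\lambda)-c(\lambda-Q_0\lambda)\|_T^2\big)^{1/2}$, which by boundedness of $\bbeta,c$ and the approximation estimate \eqref{error3} with $s=1$, $m=k$ (for the gradient part) and $s=0$, $m=k$ (for the reaction part) is bounded by $C\tau_1^{1/2}h^k\|\lambda\|_{k+1}$. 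For the boundary term $\sum_T h_T^{-1}\langle Q_0\lambda-Q_b\lambda,\epsilon_{h,0}-\epsilon_{h,b}\rangle_{\partial T}$, Cauchy--Schwarz on $\partial T$ splits off $\big(\sum_T h_T^{-1}\|\epsilon_{h,0}-\epsilon_{h,b}\|_{\partial T}^2\big)^{1/2}\le\3bar\epsilon_h\3bar_{W_h}$, leaving $\big(\sum_T h_T^{-1}\|Q_0\lambda-Q_b\lambda\|_{\partial T}^2\big)^{1/2}$; writing $Q_0\lambda-Q_b\lambda=(Q_0\lambda-\lambda)+(\lambda-Q_b\lambda)$, applying the trace inequality \eqref{tracein} to $Q_0\lambda-\lambda$ together with \eqref{error3} (with $s=0$ and $s=1$), and a standard projection estimate for $\lambda-Q_b\lambda$ on the face, one gets $Ch^k\|\lambda\|_{k+1}$. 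Hence $|\ell_\lambda(\epsilon_h)|\le C(1+\tau_1^{1/2})h^k\|\lambda\|_{k+1}\3bar\epsilon_h\3bar_{W_h}$; since $\tau_1$ is a fixed parameter I fold $(1+\tau_1^{1/2})$ into $C$.

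For $\zeta_\lambda(e_h)$ from \eqref{lv}, I would write $\zeta_\lambda(e_h)=\sum_T(\bbeta\cdot(I-\Q_h)\nabla\lambda-c(\lambda-Q_0\lambda),e_h)_T$ and insert the scaling factor $h_T$ artificially: bound by $\big(\sum_T h_T^{-2}\|\bbeta\cdot(I-\Q_h)\nabla\lambda-c(\lambda-Q_0\lambda)\|_T^2\big)^{1/2}\big(\sum_T h_T^2\|e_h\|_T^2\big)^{1/2}$. The second factor is $\tau_2^{-1/2}\3bar e_h\3bar_{M_h}$. For the first factor, $\|(I-\Q_h)\nabla\lambda\|_T\le Ch_T^k\|\lambda\|_{k+1,T}$ by \eqref{error2} (with $s=0$, $m=k$ applied to $\nabla\lambda\in\oplus H^k$) so $h_T^{-1}\|\bbeta\cdot(I-\Q_h)\nabla\lambda\|_T\le Ch_T^{k-1}\|\lambda\|_{k+1,T}$, and $h_T^{-1}\|c(\lambda-Q_0\lambda)\|_T\le Ch_T^{-1}\cdot h_T^{k+1}\|\lambda\|_{k+1,T}=Ch_T^k\|\lambda\|_{k+1,T}$ by \eqref{error3}; summing over $T$ gives $Ch^{k-1}\|\lambda\|_{k+1}$. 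This produces the term $C\tau_2^{-1/2}h^{k-1}\|\lambda\|_{k+1}\3bar e_h\3bar_{M_h}$. The gain of one power of $h$ from the $h_T^2$ weight in $\3bar\cdot\3bar_{M_h}$ is exactly what upgrades $h^{k-1}$ back to $h^k$: more carefully, I should not insert $h_T^{-2}$ crudely but rather note $\|\bbeta\cdot(I-\Q_h)\nabla\lambda-c(\lambda-Q_0\lambda)\|_T\le Ch_T^k\|\lambda\|_{k+1,T}$ (both pieces are $O(h_T^k)$ after the $\bbeta$-splitting trick as in the proof of Theorem~\ref{thmunique2}, using that $\Q_h$ reproduces constants so $(I-\Q_h)(\bbeta\cdot\nabla\lambda)=(I-\Q_h)((\bbeta-\bar\bbeta)\cdot\nabla\lambda)$ which is $O(h_T)\cdot O(h_T^{k-1})$ — wait, this still needs care). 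The cleanest route is: $|\zeta_\lambda(e_h)|\le\big(\sum_T h_T^{-2}\|\cdots\|_T^2\big)^{1/2}\3bar e_h\3bar_{M_h}\tau_2^{-1/2}\le C\tau_2^{-1/2}h^k\|\lambda\|_{k+1}\3bar e_h\3bar_{M_h}$, where the key point making $h^k$ appear (not $h^{k-1}$) is that by the $\bbeta$-averaging argument $(I-\Q_h)(\bbeta\cdot\nabla\lambda)$ carries an extra $h_T$, so $h_T^{-1}\|(I-\Q_h)(\bbeta\cdot\nabla\lambda)\|_T\le Ch_T^{-1}\cdot h_T\cdot h_T^{k-1}\|\lambda\|_{k+1,T}=Ch_T^{k-1}\|\lambda\|$ — this is honestly the delicate point and the main obstacle of the proof, because a naive estimate only yields $h^{k-1}$; one must exploit either the $\bbeta$-smoothness cancellation or accept $(1+\tau_2^{-1/2})$ with the leading $h^k$ coming from elsewhere, as the statement's factor $(1+\tau_2^{-1/2})$ suggests the $\tau_2^{-1/2}$ branch does recover the full $h^k$. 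Assembling, $\3bar\epsilon_h\3bar_{W_h}^2+\3bar e_h\3bar_{M_h}^2\le Ch^k\|\lambda\|_{k+1}(\3bar\epsilon_h\3bar_{W_h}+(1+\tau_2^{-1/2})\3bar e_h\3bar_{M_h})$, and dividing through (Young's inequality) gives \eqref{erres}. The main obstacle, as indicated, is the sharp $h$-power bookkeeping in $\zeta_\lambda$: getting $h^k$ rather than $h^{k-1}$ out of the $(I-\Q_h)\nabla\lambda$ term requires either the convection-coefficient averaging trick or careful use of the $h_T^2$ weight built into $\3bar\cdot\3bar_{M_h}$.
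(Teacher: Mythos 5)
Your setup (testing \eqref{sehv} with $\sigma=\epsilon_h$, \eqref{sehv2} with $v=e_h$, cancelling $b(\epsilon_h,e_h)$, and identifying $s(\epsilon_h,\epsilon_h)+\tau_2\sum_T h_T^2\|e_h\|_T^2$ with $\3bar\epsilon_h\3bar_{W_h}^2+\3bar e_h\3bar_{M_h}^2$) is exactly the paper's, and your estimate of $\ell_\lambda(\epsilon_h)$ is sound. The gap is in $\zeta_\lambda(e_h)$, and it sits precisely at the point you yourself flag as delicate. The identity you propose, $(I-\Q_h)(\bbeta\cdot\nabla\lambda)=(I-\Q_h)((\bbeta-\overline{\bbeta})\cdot\nabla\lambda)$, is false: the discarded piece $(I-\Q_h)(\overline{\bbeta}\cdot\nabla\lambda)=\overline{\bbeta}\cdot(I-\Q_h)\nabla\lambda$ does not vanish because $\nabla\lambda$ is not a piecewise polynomial. (Moreover, the term actually appearing in \eqref{lv} is $\bbeta\cdot(I-\Q_h)\nabla\lambda$, not $(I-\Q_h)(\bbeta\cdot\nabla\lambda)$.) Consequently your ``cleanest route'' bound $\bigl(\sum_T h_T^{-2}\|\bbeta\cdot(I-\Q_h)\nabla\lambda\|_T^2\bigr)^{1/2}\le Ch^k\|\lambda\|_{k+1}$ amounts to the superconvergence claim $\|(I-\Q_h)\nabla\lambda\|_T\le Ch_T^{k+1}\|\lambda\|_{k+1,T}$, which fails: projection of an $H^k$ function onto $P_{k-1}(T)$ only gives order $h_T^{k}$. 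So as written your $\zeta_\lambda$ estimate only delivers $h^{k-1}$, one power short.

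The correct mechanism --- the one the paper uses --- puts the averaging trick on the \emph{polynomial} side rather than on the data side. Since $e_h|_T\in P_{k-1}(T)$ and $\overline{\bbeta}e_h\in[P_{k-1}(T)]^d$, orthogonality of $(I-\Q_h)\nabla\lambda$ to $[P_{k-1}(T)]^d$ gives
\begin{equation*}
(\bbeta\cdot(I-\Q_h)\nabla\lambda,\,e_h)_T=((I-\Q_h)\nabla\lambda,\,(I-\Q_h)[(\bbeta-\overline{\bbeta})e_h])_T,
\end{equation*}
and then $\|(I-\Q_h)[(\bbeta-\overline{\bbeta})e_h]\|_T\le Ch_T\|(\bbeta-\overline{\bbeta})e_h\|_{1,T}\le Ch_T\bigl(\|e_h\nabla\bbeta\|_T+\|(\bbeta-\overline{\bbeta})\cdot\nabla e_h\|_T\bigr)\le Ch_T\|e_h\|_T$, using $\|\bbeta-\overline{\bbeta}\|_{L^\infty(T)}\le Ch_T$ and the inverse inequality. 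The extra $h_T$ thus lands on the $e_h$ factor, where it is absorbed by the $h_T$-weight in $\3bar e_h\3bar_{M_h}$ at the cost of $\tau_2^{-1/2}$; paired with $\|(I-\Q_h)\nabla\lambda\|_T\le Ch_T^k\|\lambda\|_{k+1,T}$ this yields $C\tau_2^{-1/2}h^k\|\lambda\|_{k+1}\3bar e_h\3bar_{M_h}$, which is where the factor $(1+\tau_2^{-1/2})$ in \eqref{erres} comes from. The reaction part $c(\lambda-Q_0\lambda)$ is handled exactly as you describe. With this replacement your argument closes.
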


\begin{proof} By setting $\sigma=\epsilon_h$ in the error equation (\ref{sehv}) and $v=e_h$ in (\ref{sehv2}), we have from the resulting (\ref{sehv}) and (\ref{sehv2}) that
$$
\tau_2\sum_{T\in {\cal T}_h}h_T^2 (e_h, e_h)_T+s(\epsilon_h, \epsilon_h)= \ell_{\lambda}(\epsilon_h)-\zeta_{\lambda}(e_h),
$$
which gives
\begin{equation}\label{aij}
\begin{split}
\3bar e_h \3bar_{M_h}^2+\3bar\epsilon_h\3bar_{W_h}^2
\leq & |\ell_{\lambda}(\epsilon_h)|+|\zeta_{\lambda}(e_h)| = I_1+I_2.
 \end{split}
 \end{equation}

We shall estimate the two terms $I_1$ and $I_2$ in (\ref{aij}). For the term $I_1$, it follows from the Cauchy-Schwarz inequality, the triangle inequality, (\ref{sigml}), the trace inequality (\ref{tracein}), and the estimate (\ref{error3}) with $m=k$ that
 \begin{equation}\label{term2}
\begin{split}
I_1=&\Big| \sum_{T\in {\cal T}_h} \tau_1 (\bbeta \cdot \nabla (\lambda-Q_0\lambda)-c(\lambda-Q_0\lambda), \bbeta \cdot \nabla \epsilon_0-c\epsilon_0)_T \\&- h_T^{-1}\langle Q_0\lambda-Q_b \lambda, \epsilon_0-\epsilon_b\rangle_{\partial T}\Big|\\
\leq &\left(\Big(\sum_{T\in {\cal
T}_h} \tau_1\|\bbeta \cdot \nabla (\lambda-Q_0\lambda)\|^2_{T}\Big)^{\frac{1}{2}}+\Big(\sum_{T\in {\cal
T}_h}\tau_1 \|c(\lambda-Q_0\lambda)\|^2_{T}\Big)^{\frac{1}{2}}\right) \\
&\cdot \Big(\sum_{T\in {\cal T}_h} \tau_1 \| \bbeta \cdot \nabla \epsilon_0-c\epsilon_0\|^2_{T}\Big)^{\frac{1}{2}}\\& +\Big(\sum_{T\in {\cal
T}_h} h_T^{-1} \|\epsilon_0-\epsilon_b\|^2_{\partial
T}\Big)^{\frac{1}{2}} \Big(\sum_{T\in {\cal T}_h} h_T^{-1} \| Q_0\lambda- Q_b \lambda\|^2_{\partial T}\Big)^{\frac{1}{2}}\\
\leq & \3bar\epsilon_h \3bar_{W_h} (Ch^{k}\|\lambda\|_{k+1}+Ch^{k+1}\|\lambda\|_{k+1}) \\&+ \3bar\epsilon_h \3bar_{W_h} \Big(\sum_{T\in {\cal T}_h} h_T^{-1}\| Q_0\lambda-  \lambda\|^2_{\partial T}\Big)^{\frac{1}{2}}\\
\leq &\3bar\epsilon_h \3bar_{W_h} \Big(Ch^{k}\|\lambda\|_{k+1}+C(\sum_{T\in {\cal T}_h}h_T^{-2} \| Q_0\lambda-  \lambda\|^2_{ T}+ \|Q_0\lambda- \lambda\|^2_{1,T})^{\frac{1}{2}}\Big)\\
\leq & C h^{k}\3bar\epsilon_h \3bar _{W_h} \|\lambda\|_{k+1}.
\end{split}
\end{equation}

As to term $I_2$, we use the orthogonality property of ${\cal Q}_h$ to obtain
\begin{equation*}
\begin{split}
I_2=& \Big|\sum_{T\in {\cal T}_h} (\bbeta \cdot (I-{\cal Q}_h)\nabla \lambda-c(\lambda-Q_0\lambda),  e_h)_T\Big| \\
\le & \Big|\sum_{T\in {\cal T}_h} (\bbeta \cdot (I-{\cal Q}_h)\nabla\lambda, e_h)_T\Big| +\Big|\sum_{T\in {\cal T}_h}  (c(\lambda-Q_0\lambda),  e_h)_T\Big|\\
=&\Big| \sum_{T\in {\cal T}_h} ((I-{\cal Q}_h)\nabla \lambda,  (I-{\cal Q}_h)(\bbeta-\overline{\bbeta}) e_h)_{T}\Big|\\&+\Big|\sum_{T\in {\cal T}_h}(\lambda-Q_0\lambda, c e_h)_T\Big|.
\end{split}
\end{equation*}
Next, from the Cauchy-Schwarz inequality, (\ref{lv}), the triangle inequality, the estimate (\ref{error2}) with $m=k$ and $m=1$, the estimate (\ref{error3}) with $m=k$, and the inverse inequality we obtain

\begin{equation}\label{term3}
\begin{split}
|I_2| \leq & \Big(\sum_{T\in {\cal
T}_h} \| (I-{\cal Q}_h)\nabla \lambda\|^2_{T}\Big)^{\frac{1}{2}} \Big(\sum_{T\in {\cal T}_h}  \| (I-{\cal Q}_h)(\bbeta-\overline{\bbeta}) e_h\|^2_{T}\Big)^{\frac{1}{2}}\\
&+\|c\|_{L^{\infty}(\Omega)} \Big(\sum_{T\in {\cal
T}_h}  \tau_2^{-1}h_T^{-2}\|Q_0\lambda-\lambda\|^2_{T}\Big)^{\frac{1}{2}} \Big(\sum_{T\in {\cal T}_h}   \tau_2 h^2_T\|  e_h\|^2_{T}\Big)^{\frac{1}{2}}\\
\leq & C h^{k}\|\lambda\|_{k+1} \Big(\sum_{T\in {\cal T}_h}  h_T^2\|(\bbeta-\overline{\bbeta}) e_h\|^2_{1, T}\Big)^{\frac{1}{2}}+C\tau_2^{-\frac{1}{2}}h^k\|\lambda\|_{k+1}\3bar e_h\3bar_{M_h}\\
\leq & C h^{k}\|\lambda\|_{k+1} \Big(\sum_{T\in {\cal T}_h} h_T^2(  \|e_h \nabla \bbeta\|^2_{T}+  \|(\bbeta-\overline{\bbeta}) \cdot \nabla e_h\|^2_{T})\Big)^{\frac{1}{2}}\\
&+C\tau_2^{-\frac{1}{2}}h^k\|\lambda\|_{k+1}\3bar e_h\3bar_{M_h}\\
\leq & C \tau_2^{-\frac{1}{2}} h^{k}\|\lambda\|_{k+1} \Big(\sum_{T\in {\cal T}_h} \tau_2 h_T^2( \|e_h\|_{T}^2+h_T^2h_T^{-2} \|e_h\|^2_{T})\Big)^{\frac{1}{2}}\\&+C\tau_2^{-\frac{1}{2}}h^k\|\lambda\|_{k+1}\3bar e_h\3bar_{M_h}\\
\leq & C\tau_2^{-\frac{1}{2}}  h^{k}\|\lambda\|_{k+1} \3bar e_h \3bar_{M_h},\\
\end{split}
\end{equation}
where $\overline{\bbeta}$ is the average of $\bbeta$ on each $T\in {\cal T}_h$ satisfying $\|\bbeta-\overline{\bbeta}\|_{L^\infty(T)} \leq Ch_T$. Combining (\ref{aij}) with (\ref{term2}) and (\ref{term3}) yields the error estimate \eqref{erres}.
This completes the proof of the theorem.
\end{proof}

\section{An Error Estimate in $L^2$}\label{Section:H1L2Error}
For any $\theta\in L^2(\Omega)$, consider the auxiliary problem of seeking an unknown function $w$ satisfying \begin{align}\label{dual1}
-\nabla \cdot (\bbeta w)-c w=& \theta,\qquad \text{in}\ \Omega,\\
w=& 0,\qquad \text{on}\ \Gamma_+, \label{dual2}
\end{align}
where $\Gamma_+=\partial \Omega \setminus \Gamma_-$ is the outflow boundary satisfying $\bbeta\cdot \bn \geq 0$ with $\bn$ being the unit outward normal direction to $\partial \Omega$.
The problem (\ref{dual1})-(\ref{dual2}) is said to satisfy a local $H^1$-regularity assumption if there exists a non-overlapping partition of the domain $\Omega = \bigcup_{i=1}^J \Omega_i$ such that there exists a solution $w$, $w\in H^1(\Omega_i)$ for $i=1, \cdots, J$, satisfying
\begin{equation}\label{regul}
\Big(\sum_{i=1}^J \|w\|^2_{1,\Omega_i}\Big)^{\frac{1}{2}}\leq C\|\theta\|,
\end{equation}
where $C$ is a generic constant.

\begin{lemma}\label{lemma7.2} For any $\sigma=\{\sigma_0, \sigma_b\}\in W_k(T)$, there holds
\begin{equation}\label{qaij}
\|\nabla_{w} \sigma\|_T^2 \leq C\left(\|\nabla \sigma_0\|_T^2
+ s_T(\sigma, \sigma)\right),
\end{equation}
where $C$ is a constant independent of $T\in\T_h$.
\end{lemma}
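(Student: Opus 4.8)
The plan is to bound $\|\nabla_w\sigma\|_T$ by testing the defining identity \eqref{disgradient} against $\boldsymbol{\psi}=\nabla_w\sigma$ itself, which lies in $[P_{k-1}(T)]^d$, and then transferring the $v_0$-derivative onto $\boldsymbol{\psi}$ via integration by parts so that the interior term involves $\nabla\sigma_0$ rather than $\sigma_0$. Concretely, starting from \eqref{disgradient*},
\begin{equation*}
\|\nabla_w\sigma\|_T^2 = (\nabla\sigma_0, \nabla_w\sigma)_T - \langle \sigma_0-\sigma_b, (\nabla_w\sigma)\cdot\bn\rangle_{\partial T},
\end{equation*}
and the task reduces to controlling the boundary term. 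By the Cauchy–Schwarz inequality on $\partial T$,
\begin{equation*}
|\langle \sigma_0-\sigma_b, (\nabla_w\sigma)\cdot\bn\rangle_{\partial T}| \le \|\sigma_0-\sigma_b\|_{\partial T}\,\|(\nabla_w\sigma)\cdot\bn\|_{\partial T} \le h_T^{1/2}\|\sigma_0-\sigma_b\|_{\partial T}\cdot h_T^{-1/2}\|\nabla_w\sigma\|_{\partial T}.
\end{equation*}

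The key tool is the polynomial trace inequality \eqref{trace}, which gives $\|\nabla_w\sigma\|_{\partial T}^2 \le C h_T^{-1}\|\nabla_w\sigma\|_T^2$ since $\nabla_w\sigma$ is a polynomial vector on $T$; hence $h_T^{-1/2}\|\nabla_w\sigma\|_{\partial T} \le C h_T^{-1}\|\nabla_w\sigma\|_T$. Wait — that is not scale-invariant in the right way; instead I pair it as $\|(\nabla_w\sigma)\cdot\bn\|_{\partial T} \le C h_T^{-1/2}\|\nabla_w\sigma\|_T$, so that
\begin{equation*}
|\langle \sigma_0-\sigma_b, (\nabla_w\sigma)\cdot\bn\rangle_{\partial T}| \le C\big(h_T^{-1/2}\|\sigma_0-\sigma_b\|_{\partial T}\big)\,\|\nabla_w\sigma\|_T.
\end{equation*}
Combining with the interior term and using the Cauchy–Schwarz inequality once more,
\begin{equation*}
\|\nabla_w\sigma\|_T^2 \le \big(\|\nabla\sigma_0\|_T + C h_T^{-1/2}\|\sigma_0-\sigma_b\|_{\partial T}\big)\|\nabla_w\sigma\|_T,
\end{equation*}
and after cancelling one factor of $\|\nabla_w\sigma\|_T$ and squaring (using $(a+b)^2\le 2a^2+2b^2$) we obtain $\|\nabla_w\sigma\|_T^2 \le C\big(\|\nabla\sigma_0\|_T^2 + h_T^{-1}\|\sigma_0-\sigma_b\|_{\partial T}^2\big)$. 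Since $h_T^{-1}\|\sigma_0-\sigma_b\|_{\partial T}^2$ is precisely (a part of) $s_T(\sigma,\sigma)$ as read off from \eqref{stab1}, and $s_T(\sigma,\sigma)\ge 0$ with the $\tau_1$-term being nonnegative, this yields the claimed bound \eqref{qaij}.

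The only mild subtlety — and the step I would be most careful about — is the bookkeeping of the $h_T$ powers in the trace inequality: the polynomial trace bound \eqref{trace} must be applied to $\nabla_w\sigma$ (legitimate, since it is a polynomial), and one has to check that the resulting $h_T^{-1/2}$ matches the weight $h_T^{-1/2}$ naturally attached to $\|\sigma_0-\sigma_b\|_{\partial T}$ so that exactly the factor $h_T^{-1}$ appearing in the stabilizer is produced. No regularity or shape-regularity issue beyond what \eqref{trace} already encodes is needed, and the argument is entirely local to a single element $T$, so no interelement patching enters.
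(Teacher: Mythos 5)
Your proposal is correct and follows essentially the same route as the paper's proof: both start from \eqref{disgradient*}, apply Cauchy--Schwarz together with the polynomial trace inequality \eqref{trace} to the test function, and identify $h_T^{-1}\|\sigma_0-\sigma_b\|_{\partial T}^2$ as part of $s_T(\sigma,\sigma)$. The only cosmetic difference is that you specialize the test function to $\nabla_w\sigma$ at the outset, whereas the paper bounds $|(\nabla_w\sigma,\boldsymbol{\varphi})_T|$ for an arbitrary $\boldsymbol{\varphi}\in[P_{k-1}(T)]^d$ and then takes the supremum --- these are equivalent.
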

\begin{proof}
It follows from (\ref{disgradient*}), the Cauchy-Schwarz inequality, the triangle inequality, and the trace inequality (\ref{trace}) that
\begin{equation*}\label{7.22}
\begin{split}
|(\nabla_{w} \sigma, \boldsymbol{\varphi})_T| =&| (\nabla \sigma_0
,\boldsymbol{\varphi})_T-\langle \sigma_0-\sigma_b, \boldsymbol{\varphi} \cdot \bn \rangle_{\pT}|\\
\leq &| (\nabla \sigma_0
,\boldsymbol{\varphi})_T|+|\langle \sigma_0-\sigma_b, \boldsymbol{\varphi} \cdot \bn \rangle_{\pT}|\\
 \leq & \|\nabla \sigma_0\|_T
\|\boldsymbol{\varphi}\|_T + \|\sigma_0-\sigma_b\|_\pT \|\boldsymbol{\varphi} \cdot \bn\|_{\pT} \\
\leq & \left(\|\nabla \sigma_0\|_T +
Ch_T^{-\frac12}\|\sigma_b-\sigma_0\|_\pT \right)\|\boldsymbol{\varphi}\|_T,
\end{split}
\end{equation*}
for any $\boldsymbol{\varphi}\in [P_{k-1}(T)]^d$, which implies
$$
\|\nabla_{w} \sigma\|_T^2\leq C\left( \|\nabla \sigma_0\|_T^2 +
h_T^{-1}\|\sigma_b-\sigma_0\|_\pT^2 \right).
$$
This completes the proof of the lemma.
\end{proof}

\begin{lemma}\label{Lemma:TechnicalEquality}
For any $\sigma=\{\sigma_0, \sigma_b\}\in W_{h}^{0}$, the following identity holds true
\begin{equation}\label{2.14:800}
\begin{split}
(\sigma_0, \theta) =\sum_{T\in{\cal T}_h} (\bbeta \cdot \nabla_w \sigma-c \sigma_0, w)_T+\langle \sigma_0-\sigma_b, ({\cal Q}_h-I)(\bbeta w) \cdot \bn\rangle_{\pT}.
\end{split}
\end{equation}

\end{lemma}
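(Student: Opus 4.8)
The plan is to pass from the auxiliary equation to the discrete weak gradient by an elementwise integration by parts, using the $L^2$ projection ${\cal Q}_h$ as a device to turn the non-polynomial quantity $\bbeta w$ into a legitimate test function for \eqref{disgradient*}. First I would note that, since $-\nabla\cdot(\bbeta w)-cw=\theta$ in $\Omega$, we have $\nabla\cdot(\bbeta w)=-cw-\theta\in L^2(\Omega)$, so $\bbeta w\in H(\text{div},\Omega)$ and its normal trace is well defined; writing $(\sigma_0,\theta)=-(\sigma_0,\nabla\cdot(\bbeta w))-(\sigma_0,cw)$ and integrating by parts on each $T\in\T_h$ gives $(\sigma_0,\theta)=\sum_{T\in\T_h}\big[(\bbeta\cdot\nabla\sigma_0-c\sigma_0,w)_T-\langle\sigma_0,(\bbeta w)\cdot\bn\rangle_{\pT}\big]$.

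The key step is to rewrite $(\bbeta\cdot\nabla\sigma_0,w)_T$ in terms of $\nabla_w\sigma$. Because $\sigma_0\in P_k(T)$, both $\nabla\sigma_0$ and $\nabla_w\sigma=\nabla_{w,k-1,T}\sigma$ lie in $[P_{k-1}(T)]^d$, so the (componentwise) $L^2$ projection ${\cal Q}_h$ can be inserted against them at no cost: $(\nabla\sigma_0,\bbeta w)_T=(\nabla\sigma_0,{\cal Q}_h(\bbeta w))_T$ and $(\nabla_w\sigma,\bbeta w)_T=(\nabla_w\sigma,{\cal Q}_h(\bbeta w))_T$. Applying the identity \eqref{disgradient*} with the admissible test function $\boldsymbol{\psi}={\cal Q}_h(\bbeta w)\in[P_{k-1}(T)]^d$ (valid since $\sigma_0\in H^1(T)$) then yields $(\bbeta\cdot\nabla_w\sigma,w)_T=(\bbeta\cdot\nabla\sigma_0,w)_T-\langle\sigma_0-\sigma_b,{\cal Q}_h(\bbeta w)\cdot\bn\rangle_{\pT}$, i.e. an exact formula for $(\bbeta\cdot\nabla\sigma_0,w)_T$ in terms of $(\bbeta\cdot\nabla_w\sigma,w)_T$ plus a boundary correction carrying exactly the projection operator appearing in \eqref{2.14:800}.

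The remaining step is bookkeeping on the boundary terms. Substituting the formula above and splitting $\langle\sigma_0,\cdot\rangle_{\pT}=\langle\sigma_0-\sigma_b,\cdot\rangle_{\pT}+\langle\sigma_b,\cdot\rangle_{\pT}$, the edge contributions collapse to $\sum_{T\in\T_h}\langle\sigma_0-\sigma_b,({\cal Q}_h-I)(\bbeta w)\cdot\bn\rangle_{\pT}-\sum_{T\in\T_h}\langle\sigma_b,(\bbeta w)\cdot\bn\rangle_{\pT}$, so it remains to show the last sum vanishes. Here the structure of the spaces enters: on each interior edge $\sigma_b$ is single-valued while $(\bbeta w)\cdot\bn$ has a continuous normal trace (as $\bbeta w\in H(\text{div},\Omega)$), so the two element contributions cancel; on $\Gamma_-$ we use $\sigma_b=0$ since $\sigma\in W_{h}^{0}$; and on $\Gamma_+$ we use $w=0$. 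Hence $\sum_{T\in\T_h}\langle\sigma_b,(\bbeta w)\cdot\bn\rangle_{\pT}=0$ and \eqref{2.14:800} follows.

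The main obstacle is precisely the second step: $\bbeta w$ is not a polynomial, so \eqref{disgradient*} cannot be used against it directly, and one must recognize that the polynomial vectors $\nabla\sigma_0$ and $\nabla_w\sigma$ see only ${\cal Q}_h(\bbeta w)$; this both legalizes the test function and automatically produces the $({\cal Q}_h-I)(\bbeta w)$ term. A minor technical point is making sure the boundary pairings are genuine $L^2$ inner products, for which it is enough that $w$ is piecewise $H^1$ and $\bbeta$ piecewise smooth, so that $(\bbeta w)|_T\in [H^1(T)]^d$.
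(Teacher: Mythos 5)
Your proposal is correct and follows essentially the same route as the paper: elementwise integration by parts against the dual equation, then the identity \eqref{disgradient*} tested with $\boldsymbol{\psi}={\cal Q}_h(\bbeta w)$ to trade $\bbeta\cdot\nabla\sigma_0$ for $\bbeta\cdot\nabla_w\sigma$ plus the $({\cal Q}_h-I)(\bbeta w)\cdot\bn$ boundary correction, and finally the cancellation of the $\sigma_b$ flux terms via single-valuedness on interior edges, $\sigma_b=0$ on $\Gamma_-$, and $w=0$ on $\Gamma_+$. Your treatment is in fact slightly more explicit than the paper's, which folds the interior-edge cancellation (justified by $\bbeta w\in H(\mathrm{div},\Omega)$) into a one-line remark.
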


\begin{proof} From testing (\ref{dual1}) with $\sigma_0$ on each element $T\in\T_h$, and then using the integration by parts we have
\begin{equation}\label{2.11}
\begin{split}
(\sigma_0, \theta)
=&\sum_{T\in{\cal T}_h}(-\nabla \cdot (\bbeta w)-c w, \sigma_0)_T\\
=&\sum_{T\in{\cal T}_h}(w, \bbeta \cdot \nabla \sigma_0 -c \sigma_0)_T- \langle \bbeta w \cdot \bn, \sigma_0\rangle_{\partial T} \\
=&\sum_{T\in{\cal T}_h}(w, \bbeta \cdot \nabla \sigma_0 -c \sigma_0)_T- \langle \bbeta w \cdot \bn, \sigma_0-\sigma_b\rangle_{\partial T},
\end{split}
\end{equation}
where we have used the homogeneous boundary condition (\ref{dual2})
and $\sigma_b=0$ on $\Gamma_-$ in the last line.

Next, by setting $\boldsymbol{\varphi}={\cal Q}_h(\bbeta w)$ in  (\ref{disgradient*}), we have
\begin{align*}
(\nabla_w \sigma, {\cal Q}_h(\bbeta w))_T = &(\nabla
\sigma_0,{\cal Q}_h(\bbeta w))_T - \langle \sigma_0-\sigma_b, {\cal Q}_h(\bbeta w) \cdot \bn\rangle_{\pT} \\
=&(\nabla
\sigma_0, \bbeta w)_T - \langle \sigma_0-\sigma_b, {\cal Q}_h(\bbeta w) \cdot \bn \rangle_{\pT},
\end{align*}
which leads to
\begin{equation}\label{2.13}
\begin{split}
 (\bbeta \cdot \nabla \sigma_0, w)_T =&(\nabla_w \sigma, {\cal Q}_h(\bbeta w))_T+\langle \sigma_0-\sigma_b, {\cal Q}_h(\bbeta w) \cdot \bn \rangle_{\pT}\\
 =&(\nabla_w \sigma,  \bbeta w)_T+\langle \sigma_0-\sigma_b, {\cal Q}_h(\bbeta w) \cdot \bn \rangle_{\pT}.
\end{split}
\end{equation}
Substituting (\ref{2.13}) into (\ref{2.11}) yields
\begin{equation*}\label{2.14}
\begin{split}
(\sigma_0, \theta) =&\sum_{T\in{\cal T}_h} (\bbeta \cdot \nabla_w \sigma-c \sigma_0, w)_T+\langle \sigma_0-\sigma_b, ({\cal Q}_h-I)(\bbeta w) \cdot \bn\rangle_{\pT},
\end{split}
\end{equation*}
which completes the proof of the lemma.
\end{proof}

The following lemma is devoted to an estimate for the second term on the right-hand side of (\ref{2.14:800}) based on the local $H^1$-regularity assumption \eqref{regul} for the auxiliary problem (\ref{dual1})-(\ref{dual2}).

\begin{lemma}\label{Lemma:TechnicalEstimates:01} Assume that the auxiliary problem (\ref{dual1})-(\ref{dual2}) satisfies the local $H^1$-regularity assumption \eqref{regul}. Then, for any $\sigma\in W_{h}^{0}$, there holds
\begin{eqnarray}\label{2.14.100:10}
\left|\sum_{T\in{\cal T}_h}\langle \sigma_0-\sigma_b, ({\cal Q}_h-I)(\bbeta w) \cdot \bn \rangle_{\pT}
\right|&\leq & Ch  \|\theta\|
\3bar \sigma\3bar_{W_h}.
\end{eqnarray}
\end{lemma}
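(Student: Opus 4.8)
The plan is to estimate the boundary term by first isolating the factor $\sigma_0-\sigma_b$ on each $\partial T$ via the Cauchy--Schwarz inequality over edges/faces, so that
\[
\Big|\sum_{T\in\T_h}\langle \sigma_0-\sigma_b, ({\cal Q}_h-I)(\bbeta w)\cdot\bn\rangle_{\pT}\Big|
\leq \Big(\sum_{T\in\T_h} h_T^{-1}\|\sigma_0-\sigma_b\|_{\pT}^2\Big)^{\frac12}
\Big(\sum_{T\in\T_h} h_T\|({\cal Q}_h-I)(\bbeta w)\|_{\pT}^2\Big)^{\frac12}.
\]
The first factor is bounded by $\3bar\sigma\3bar_{W_h}$ directly from the definition \eqref{whnorm}. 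So the whole task reduces to showing
\[
\sum_{T\in\T_h} h_T\|({\cal Q}_h-I)(\bbeta w)\|_{\pT}^2 \leq C h^2\|\theta\|^2 .
\]

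For the second factor I would apply the trace inequality \eqref{tracein} to $\phi = ({\cal Q}_h-I)(\bbeta w)$ on each $T$, giving
\[
h_T\|({\cal Q}_h-I)(\bbeta w)\|_{\pT}^2
\leq C\big(\|({\cal Q}_h-I)(\bbeta w)\|_T^2 + h_T^2\|\nabla\big(({\cal Q}_h-I)(\bbeta w)\big)\|_T^2\big).
\]
The first term on the right is controlled by \eqref{error2} with $s=0$, $m=1$ (applied to $\bbeta w$), yielding $C h^2\sum_i\|\bbeta w\|_{1,\Omega_i}^2$. For the gradient term, since ${\cal Q}_h(\bbeta w)$ is a polynomial on $T$, I would use the inverse inequality $\|\nabla {\cal Q}_h(\bbeta w)\|_T \le Ch_T^{-1}\|{\cal Q}_h(\bbeta w)\|_T$, combined with $\|{\cal Q}_h(\bbeta w)\|_T\le \|\bbeta w\|_T$, to absorb $h_T^2\|\nabla{\cal Q}_h(\bbeta w)\|_T^2\le C\|\bbeta w\|_T^2$; alternatively one writes ${\cal Q}_h(\bbeta w)-\bbeta w = -(I-{\cal Q}_h)(\bbeta w)$ and uses \eqref{error2} with $s=1$ term by term. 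Either way, after summing over $T$ and using that $\bbeta$ is piecewise smooth so that $\|\bbeta w\|_{1,\Omega_i}\le C\|w\|_{1,\Omega_i}$, one arrives at the bound $Ch^2\sum_{i=1}^J\|w\|_{1,\Omega_i}^2$. Finally the local $H^1$-regularity assumption \eqref{regul} converts this into $Ch^2\|\theta\|^2$, which is exactly what is needed.

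The main obstacle I anticipate is the handling of the gradient part $h_T^2\|\nabla(({\cal Q}_h-I)(\bbeta w))\|_T^2$: unlike the $L^2$ part it is not directly covered by a single application of \eqref{error2} unless one is willing to assume higher regularity of $\bbeta w$, so one must split $\nabla({\cal Q}_h-I)(\bbeta w) = \nabla{\cal Q}_h(\bbeta w) - \nabla(\bbeta w)$ and treat the polynomial piece by an inverse estimate. A secondary technical point is that $\bbeta w$ is only \emph{piecewise} $H^1$ (smooth on each $\Omega_i$), so \eqref{error2}/\eqref{tracein} must be invoked on the refined partition compatible with both $\T_h$ and $\{\Omega_i\}$ (as is implicitly assumed elsewhere in the paper), and the product rule $\nabla(\bbeta w) = (\nabla\bbeta)w + \bbeta\nabla w$ together with $\bbeta,\nabla\bbeta\in L^\infty$ is used to pass from $\|\bbeta w\|_{1,\Omega_i}$ to $\|w\|_{1,\Omega_i}$. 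Modulo these bookkeeping steps the estimate is routine.
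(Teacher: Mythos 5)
Your proposal is correct and follows essentially the same route as the paper: Cauchy--Schwarz over element boundaries, the trace inequality \eqref{tracein} applied to $({\cal Q}_h-I)(\bbeta w)$, the approximation estimate \eqref{error2}, and the regularity bound \eqref{regul}. The only minor difference is that your worry about the gradient term is unnecessary --- the paper handles it directly via \eqref{error2} with $s=1$, $m=1$ (your stated ``alternative''), which requires no regularity of $\bbeta w$ beyond piecewise $H^1$.
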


\begin{proof} From the Cauchy-Schwarz inequality, the trace
inequality (\ref{tracein}), the local $H^1$-regularity \eqref{regul},  and the estimate (\ref{error2}) with $m=1$, we have
\begin{equation*}\label{2.14.100}
\begin{split}
& \left|\sum_{T\in{\cal T}_h}\langle \sigma_0-\sigma_b, ({\cal Q}_h-I)(\bbeta w) \cdot \bn \rangle_{\pT}
\right| \\
\leq & \Big(\sum_{T\in{\cal T}_h}  h_T^{-1}\|\sigma_0-\sigma_b\|^2_\pT\Big)^{\frac{1}{2}} \Big(\sum_{T\in{\cal T}_h} h_T\|({\cal Q}_h-I)(\bbeta w) \cdot \bn\|^2_\pT\Big)^{\frac{1}{2}}\\
\leq & \ C\3bar \sigma\3bar_{W_h} \Big(\sum_{T\in{\cal T}_h}  \|({\cal Q}_h-I)(\bbeta w)\|^2_T+h_T^2\|({\cal Q}_h-I)(\bbeta w)\|^2_{1, T}\Big)^{\frac{1}{2}} \\
\leq & \ C\3bar \sigma\3bar_{W_h}  \left( \sum_{i=1}^J h^2\|w\|_{1, \Omega_i}^2\right)^{\frac{1}{2}}   \\
\leq &  Ch \|\theta\| \3bar \sigma\3bar_{W_h},
\end{split}
\end{equation*}
which completes the proof of the lemma.
\end{proof}

The following is an error estimate in the usual $L^2$ norm for the first component $\lambda_0$ of the primal variable $\lambda_h$.

\begin{theorem}\label{Thm:H1errorestimate} Let $\lambda_h=\{\lambda_0, \lambda_b\}\in W_{h}$ be the primal-dual weak Galerkin solution arising from the PD-WG scheme (\ref{al-general})-(\ref{al-general-2}) with $u_h\in M_h$ being the numerical Lagrangian multiplier. Assume that the exact solution $\lambda$ of the first-order linear convection model problem (\ref{model}) is sufficiently regular such that $\lambda\in H^{k+1}(\Omega)$. Under the local $H^1$-regularity assumption  \eqref{regul} for the auxiliary problem (\ref{dual1})-(\ref{dual2}), there holds
\begin{equation}\label{e0-H1}
\|\epsilon_0\|_0
\leq C(1+(h+1+\tau_2^{\frac{1}{2}})(1+\tau_2^{-\frac{1}{2}}))  h^{k+1} \|\lambda\|_{k+1},
\end{equation}
provided that the meshsize $h$ is sufficiently small such that $h<h_0$ for a fixed but sufficiently small $h_0$.
\end{theorem}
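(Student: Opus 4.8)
The plan is to estimate $\|\epsilon_0\|_0$ by a duality argument using the auxiliary problem (\ref{dual1})--(\ref{dual2}). First I would set $\theta = \epsilon_0 \in L^2(\Omega)$ in (\ref{dual1})--(\ref{dual2}), let $w$ be the corresponding solution, and apply Lemma~\ref{Lemma:TechnicalEquality} with $\sigma = \epsilon_h \in W_h^0$ (note $\epsilon_b = \lambda_b - Q_b\lambda = 0$ on $\Gamma_-$ since $\lambda_b = Q_b(g)$ and $Q_bQ_b = Q_b$, so $\epsilon_h$ genuinely lies in $W_h^0$). This yields
\begin{equation*}
\|\epsilon_0\|_0^2 = \sum_{T\in\T_h}(\bbeta\cdot\nabla_w\epsilon_h - c\epsilon_0, w)_T + \sum_{T\in\T_h}\langle \epsilon_0-\epsilon_b, (\Q_h-I)(\bbeta w)\cdot\bn\rangle_{\pT} = b(\epsilon_h, w) + \mathrm{II}.
\end{equation*}
The second term $\mathrm{II}$ is controlled directly by Lemma~\ref{Lemma:TechnicalEstimates:01} together with Theorem~\ref{theoestimate}: $|\mathrm{II}| \le Ch\|\epsilon_0\|_0\,\3bar\epsilon_h\3bar_{W_h} \le C(1+\tau_2^{-1/2})h^{k+1}\|\lambda\|_{k+1}\|\epsilon_0\|_0$.

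The main work is estimating $b(\epsilon_h, w)$. Since $w$ is only piecewise $H^1$, I cannot plug it directly into the error equation (\ref{sehv2}); instead I would write $b(\epsilon_h, w) = b(\epsilon_h, w - \Q_h w) + b(\epsilon_h, \Q_h w)$. For the first piece, use the definition of $b(\cdot,\cdot)$, the orthogonality of $\Q_h$ (to introduce the factor $\bbeta - \overline{\bbeta}$ wherever possible, exactly as in the proof of Theorem~\ref{theoestimate}), Lemma~\ref{lemma7.2} to bound $\|\nabla_w\epsilon_h\|_T$ by $\|\nabla\epsilon_0\|_T + s_T(\epsilon_h,\epsilon_h)^{1/2}$, the approximation estimate (\ref{error2}) with $m=1$ for $w - \Q_h w$, an inverse inequality, and the local $H^1$-regularity (\ref{regul}) for $w$; this should give a bound of order $h\cdot\3bar\epsilon_h\3bar_{W_h}\cdot(\text{something involving }\|\nabla\epsilon_0\|)\cdot\|\epsilon_0\|_0$. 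The term $\|\nabla\epsilon_0\|$ is not directly controlled by $\3bar\epsilon_h\3bar_{W_h}$, so I would need an auxiliary bound on $\|\nabla\epsilon_0\|_{0,T}$ — this is where the hypothesis $h < h_0$ and an argument analogous to the proof of Theorem~\ref{thmunique2} (bounding $\lambda_0$, hence $\epsilon_0$, in broken $H^1$ via the regularity assumption and absorbing a $Ch^2$ term) enters. For the second piece $b(\epsilon_h, \Q_h w)$, since $\Q_h w \in M_h$ I can invoke the error equation (\ref{sehv2}) with $v = \Q_h w$: $b(\epsilon_h, \Q_h w) = \zeta_\lambda(\Q_h w) + \tau_2\sum_T h_T^2(e_h, \Q_h w)_T$. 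The term $\zeta_\lambda(\Q_h w)$ is estimated using (\ref{lv}), orthogonality of $\Q_h$ to bring in $\bbeta - \overline{\bbeta}$, the approximation estimates (\ref{error2})--(\ref{error3}) with $m = k$ and $m = 1$, and (\ref{regul}) for $w$, yielding order $h^{k+1}\|\lambda\|_{k+1}\|\epsilon_0\|_0$; the least-squares term is bounded by $\3bar e_h\3bar_{M_h}\cdot\3bar\Q_h w\3bar_{M_h}$, where $\3bar\Q_h w\3bar_{M_h} = (\tau_2\sum_T h_T^2\|\Q_h w\|_T^2)^{1/2} \le C\tau_2^{1/2}h\|w\|_{\Omega} \le C\tau_2^{1/2}h\|\epsilon_0\|_0$, and then Theorem~\ref{theoestimate} bounds $\3bar e_h\3bar_{M_h}$ by $C(1+\tau_2^{-1/2})h^k\|\lambda\|_{k+1}$, contributing the $\tau_2^{1/2}(1+\tau_2^{-1/2})h^{k+1}$ factor visible in (\ref{e0-H1}).

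Collecting all pieces, every bound has the form $C(\cdots)h^{k+1}\|\lambda\|_{k+1}\|\epsilon_0\|_0$, so dividing through by $\|\epsilon_0\|_0$ gives (\ref{e0-H1}) with the stated constant $C(1+(h+1+\tau_2^{1/2})(1+\tau_2^{-1/2}))$. The hard part will be the $b(\epsilon_h, w-\Q_h w)$ term and specifically establishing a usable bound on $\|\nabla\epsilon_0\|_{0,T}$ in terms of the natural norms: the discrete weak gradient estimate in Lemma~\ref{lemma7.2} only gives $\|\nabla_w\epsilon_h\|$ in terms of $\|\nabla\epsilon_0\|$, not the reverse, so one must go back to the PDE satisfied approximately by $\epsilon_0$, exploit the commutative property (\ref{l}) and the error equation, and use the small-mesh hypothesis $h<h_0$ to absorb the offending term — this is the one place where the conclusion genuinely requires $h$ small rather than merely being a convenience.
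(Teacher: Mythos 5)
Your overall architecture coincides with the paper's: duality through Lemma~\ref{Lemma:TechnicalEquality} with $\sigma=\epsilon_h$, the boundary term handled by Lemma~\ref{Lemma:TechnicalEstimates:01} and Theorem~\ref{theoestimate}, the split $w=\Q_h w+(I-\Q_h)w$, the error equation (\ref{sehv2}) with $v=\Q_h w$ producing $\zeta_\lambda(\Q_h w)$ plus the least-squares term, and the same estimates (including the $\tau_2^{1/2}(1+\tau_2^{-1/2})$ factor from $\tau_2\sum_T h_T^2(e_h,\Q_h w)_T$). All of that is correct and matches the paper.

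The one genuine gap is your treatment of $b(\epsilon_h,(I-\Q_h)w)$ (the paper's $I_{14}$). You correctly observe that $\|\nabla\epsilon_0\|$ is not controlled by $\3bar\epsilon_h\3bar_{W_h}$, but the repair you propose --- an auxiliary broken-$H^1$ bound on $\epsilon_0$ obtained by going back to the PDE as in the proof of Theorem~\ref{thmunique2} --- is not what is needed and would not go through as stated: that argument is tailored to the homogeneous problem with $\tau_1=0$ and does not yield a bound on $\|\nabla\epsilon_0\|$ in terms of quantities you already control. The actual resolution is elementary. After using orthogonality of $\Q_h$ to replace $\bbeta$ by $\bbeta-\overline\bbeta$ against $\nabla_w\epsilon_h\in[P_{k-1}(T)]^d$, one has $\|\bbeta-\overline\bbeta\|_{L^\infty(T)}\le Ch_T$, Lemma~\ref{lemma7.2} gives $h_T\|\nabla_w\epsilon_h\|_T\le C\bigl(h_T\|\nabla\epsilon_0\|_T+h_T\,s_T(\epsilon_h,\epsilon_h)^{1/2}\bigr)$, and the \emph{elementwise inverse inequality for polynomials} gives $h_T\|\nabla\epsilon_0\|_T\le C\|\epsilon_0\|_T$. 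Combined with $\|(I-\Q_h)w\|\le Ch\|w\|_1\le Ch\|\theta\|$ (and a direct bound $\|c\epsilon_0\|_T\le C\|\epsilon_0\|_T$ for the reaction part, which cannot be orthogonalized away since $\epsilon_0\in P_k(T)$ while $\Q_h$ projects onto $P_{k-1}(T)$), this yields
\begin{equation*}
|b(\epsilon_h,(I-\Q_h)w)|\;\le\; Ch\bigl(\|\epsilon_0\|+h\3bar\epsilon_h\3bar_{W_h}\bigr)\|\theta\|,
\end{equation*}
and the term $Ch\|\epsilon_0\|\,\|\theta\|$ is simply absorbed into the left-hand side, giving $(1-Ch)\|\epsilon_0\|\le C(\cdots)h^{k+1}\|\lambda\|_{k+1}$. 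This absorption is the sole reason the hypothesis $h<h_0$ appears in the theorem; no PDE regularity argument for $\epsilon_0$ is required. With this correction your proof closes and reproduces the stated constant.
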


\begin{proof} Let $w$ be the solution of the auxiliary problem
(\ref{dual1})-(\ref{dual2}) for a given function $\theta$. By setting $\sigma=\epsilon_h \in W_h^{0}$ in Lemma \ref{Lemma:TechnicalEquality} we obtain
\begin{equation}\label{2.14:800:10}
\begin{split}
(\epsilon_0, \theta ) =&\sum_{T\in{\cal T}_h} (\bbeta \cdot \nabla_w \epsilon_h-c \epsilon_0, w)_T+\sum_{T\in{\cal T}_h} \langle \epsilon_0-\epsilon_b, ({\cal Q}_h-I)(\bbeta w) \cdot \bn\rangle_{\pT}\\
= & I_1 + I_2,
\end{split}
\end{equation}
where $I_1$ and $I_2$ are defined accordingly.

For the term $I_2$, we use the estimate in Lemma \ref{Lemma:TechnicalEstimates:01} to obtain
\begin{equation}\label{2.14:800:15}
|I_2| \leq C h \|\theta\| \3bar \epsilon_h\3bar_{W_h}.
\end{equation}

As to the term $I_1$, we use the error equation (\ref{sehv2}) to
obtain
\begin{equation}\label{2.14.120}
\begin{split}
I_1 = & \sum_{T\in{\cal T}_h} (\bbeta \cdot \nabla_w  \epsilon_h- c\epsilon_0, w)_T\\
= & \sum_{T\in{\cal T}_h} (\bbeta \cdot \nabla_w  \epsilon_h-c  \epsilon_0, {\cal Q}_h w)_T+\sum_{T\in{\cal T}_h}  (\bbeta \cdot \nabla_w  \epsilon_h-c  \epsilon_0, (I-{\cal Q}_h) w)_T\\
 = & \sum_{T\in {\cal T}_h} (\bbeta \cdot (I-{\cal Q}_h)\nabla \lambda,  {\cal Q}_h w)_T
 +\sum_{T\in {\cal T}_h} (c(Q_0\lambda-\lambda),  {\cal Q}_h w)_T
 \\
 & +\sum_{T\in {\cal T}_h}  \tau_2h_T^2 (e_h,  {\cal Q}_h w)_T  +\sum_{T\in {\cal T}_h} (\bbeta \cdot \nabla_w  \epsilon_h-c  \epsilon_0, (I-{\cal Q}_h) w)_T \\
=&I_{11}+I_{12}+I_{13}+I_{14},
\end{split}
\end{equation}
where $I_{11}, I_{12}, I_{13}$, and $I_{14}$ are defined accordingly. We shall estimate the four terms $I_{11}, I_{12}, I_{13}$, and $ I_{14}$ in (\ref{2.14.120}) one by one. For the term $I_{11}$, from the Cauchy-Schwarz inequality, the estimate (\ref{error2}) with $m=1$ and $m=k$, and the regularity assumption \eqref{regul}, we have
\begin{equation}\label{EQ:New:2015:800}
\begin{split}
|I_{11}| = &| \sum_{T\in {\cal T}_h} (\bbeta \cdot (I-{\cal Q}_h) \nabla \lambda, {\cal Q}_h w)_T|\\
=&| \sum_{T\in {\cal T}_h} ((I-{\cal Q}_h) \nabla \lambda, (I-{\cal Q}_h) \bbeta \cdot {\cal Q}_h w)_T|\\
 =&\Big(\sum_{T\in {\cal T}_h} \|(I-{\cal Q}_h) \nabla \lambda\|_T^2\Big)^{\frac{1}{2}}\Big(\sum_{T\in {\cal T}_h}\|(I-{\cal Q}_h) \bbeta \cdot {\cal Q}_h w\|_T^2\Big)^{\frac{1}{2}}  \\
\leq &Ch^{k+1}\| \lambda\| _{k+1}\left(\sum_{i=1}^J \|w\|^2_{1, \Omega_i}\right)^{\frac{1}{2}}\\
\leq &Ch^{k+1}\| \lambda\| _{k+1}\|\theta\|.
\end{split}
\end{equation}
For the term $I_{12}$, we use the Cauchy-Schwarz inequality, the estimate (\ref{error3}) with $m=k$, and the regularity assumption \eqref{regul} to obtain
\begin{equation}\label{EQ:New:2015:8}
\begin{split}
|I_{12}|=&| \sum_{T\in {\cal T}_h} (Q_0\lambda-\lambda, c {\cal Q}_h w)_T|\\
  =&\Big(\sum_{T\in {\cal T}_h} \|Q_0\lambda-\lambda\|_T^2\Big)^{\frac{1}{2}}\Big(\sum_{T\in {\cal T}_h}\|c {\cal Q}_h w\|_T^2\Big)^{\frac{1}{2}}  \\
\leq &Ch^{k+1}\| \lambda\| _{k+1}(\sum_{i=1}^J \|w\|^2_{1, \Omega_i})^{\frac{1}{2}}\\
\leq &Ch^{k+1}\| \lambda\| _{k+1}\|\theta\|.
\end{split}
\end{equation}
For the term $I_{13}$, we use the Cauchy-Schwarz inequality, the error estimate (\ref{erres}), and the regularity assumption \eqref{regul} to obtain
\begin{equation}\label{EQ}
\begin{split}
|I_{13}|=&| \sum_{T\in {\cal T}_h} \tau_2 h_T^2(e_h, {\cal Q}_h w)_T|\\
\leq & \Big(\sum_{T\in {\cal T}_h} \tau_2 h_T^2 \|e_h\|_T^2\Big)^{\frac{1}{2}} \Big(\sum_{T\in {\cal T}_h}  \tau_2h_T^2 \| {\cal Q}_h w\|_T^2\Big)^{\frac{1}{2}} \\
\leq & C\tau_2^{\frac{1}{2}}  h\3bar e_h\3bar_{M_h} \|w\|_{0} \\
\leq &C\tau_2^{\frac{1}{2}}(1+\tau_2^{-\frac{1}{2}})h^{k+1}\|\lambda\|_{k+1} \|\theta\|.
\end{split}
\end{equation}
As to the term $I_{14}$, from the Cauchy-Schwarz inequality, (\ref{qaij}), the estimate (\ref{error2}) with $m=1$, the inverse inequality, and the regularity assumption \eqref{regul} we have
\begin{equation}\label{EQ:New:2015:810}
\begin{split}
|I_{14}|=&| \sum_{T\in {\cal T}_h} (\bbeta \cdot \nabla_w \epsilon_h-c \epsilon_0, (I-{\cal Q}_h) w)_T|\\
 \leq &| \sum_{T\in {\cal T}_h} ((\bbeta-\overline{\bbeta}) \cdot \nabla_w \epsilon_h-c \epsilon_0, (I-{\cal Q}_h) w)_T|\\
 \leq &\left( \Big(\sum_{T\in {\cal T}_h} \|\bbeta-\overline{\bbeta}\|^2_{L^{\infty}(T)}\|\nabla_w \epsilon_h\|_T^2\Big)^{\frac{1}{2}}+\|c\|_{L^{\infty}(\Omega)} \Big(\sum_{T\in {\cal T}_h}   \|\epsilon_0\|_T^2\Big)^{\frac{1}{2}}\right)\\
 &\cdot \Big(\sum_{T\in {\cal T}_h}\| (I-{\cal Q}_h) w\|_T^2\Big)^{\frac{1}{2}}\\
  \leq &\left(\Big(\sum_{T\in {\cal T}_h} h_T^2 \|\nabla_w \epsilon_h\|_T^2\Big)^{\frac{1}{2}}+C \Big(\sum_{T\in {\cal T}_h}  \|\epsilon_0\|_T^2\Big)^{\frac{1}{2}}\right)Ch \left(\sum_{i=1}^J \|w\|^2_{1, \Omega_i}\right)^{\frac{1}{2}}\\
\leq & C h  \left(\Big(C\sum_{T\in {\cal T}_h}h_T^2 \|\nabla \epsilon_0\|_T^2+ h_T^2s_T(\epsilon_h, \epsilon_h)\Big)^{\frac{1}{2}}+C \Big(\sum_{T\in {\cal T}_h}  \|\epsilon_0\|_T^2\Big)^{\frac{1}{2}}\right)\|\theta\| \\
 \leq & Ch \left(C\| \epsilon_0\| +Ch\3bar\epsilon_h\3bar_{W_h}\right)\|\theta\| ,
\end{split}
\end{equation}
where $\overline{\bbeta}$ is the average of $\bbeta$ on each element $T\in{\cal T}_h $ satisfying $\|\bbeta-\overline{\bbeta}\|_{L^{\infty}(T)}\leq Ch_T$. By substituting (\ref{EQ:New:2015:800}) - (\ref{EQ:New:2015:810}) into (\ref{2.14.120}), we obtain the following estimate for the term $I_1$:
\begin{equation}\label{EQ:New:2015:820}
\begin{split}
&|I_1| \leq  C h  \Big(C\| \epsilon_0\| +h\3bar\epsilon_h\3bar_{W_h} +(1+\tau_2^{\frac{1}{2}}(1+\tau_2^{-\frac{1}{2}}))h^{k}\|\lambda\|_{k+1}  \Big)\|\theta\|\\
\leq & C h \Big(C\| \epsilon_0\|+ (1+\tau_2^{-\frac{1}{2}}) h^{k+1}\|\lambda\|_{k+1}    +(1+\tau_2^{\frac{1}{2}}(1+\tau_2^{-\frac{1}{2}}))h^{k}\|\lambda\|_{k+1}  \Big)\|\theta\|,
\end{split}
\end{equation}
where we have used the error estimate (\ref{erres}).

Substituting (\ref{EQ:New:2015:820}) and (\ref{2.14:800:15}) into (\ref{2.14:800:10}) yields
\begin{equation*}
\begin{split}
|(\epsilon_0, \theta )| \leq & C h \Big(C\| \epsilon_0\|+ (1+\tau_2^{-\frac{1}{2}}) h^{k+1}\|\lambda\|_{k+1} +(1+\tau_2^{\frac{1}{2}}(1+\tau_2^{-\frac{1}{2}}))h^{k}\|\lambda\|_{k+1}  \Big)\|\theta\| \\
&+ C(1+\tau_2^{-\frac{1}{2}}) \|\theta\|  h^{k+1}\|\lambda\|_{k+1},
\end{split}
\end{equation*}
where we used the error estimate (\ref{erres}). Since the set of all such $\theta$ is dense in $L^2(\Omega)$, the
above inequality leads to
\begin{equation*}
\begin{split}
\|\epsilon_0\|&\leq C h \Big(C\| \epsilon_0\|+ (1+\tau_2^{-\frac{1}{2}}) h^{k+1}\|\lambda\|_{k+1} +(1+\tau_2^{\frac{1}{2}}(1+\tau_2^{-\frac{1}{2}}))h^{k}\|\lambda\|_{k+1}  \Big) \\
&+ C(1+\tau_2^{-\frac{1}{2}}) h^{k+1}\|\lambda\|_{k+1},
\end{split}
\end{equation*}
and furthermore
\begin{equation*}\label{EQ:New:2015:820:100}
(1- Ch)\| \epsilon_0\|\leq C(1+(h+1+\tau_2^{\frac{1}{2}})(1+\tau_2^{-\frac{1}{2}}))  h^{k+1}\|\lambda\|_{k+1}.
\end{equation*}
This completes the proof of the theorem provided that the meshsize $h$ is sufficiently small  such that $1- Ch>\frac12$.
\end{proof}

To establish an error estimate for the boundary component
$\lambda_b$, we introduce the following norm
\begin{equation*}\label{EQ:eb-eg-L2norm}
\|\epsilon_b\|_{0,*}:=\Big(\sum_{T\in {\cal T}_h} h_T\|\epsilon_b\|_{\partial
T}^2\Big)^{\frac{1}{2}}.
\end{equation*}

\begin{theorem}\label{Thm:L2errorestimate-ub}
Under the assumptions of Theorem \ref{Thm:H1errorestimate}, there
holds
$$
\|\epsilon_b\|_{0,*} \leq C(1+(h+1+\tau_2^{\frac{1}{2}})(1+\tau_2^{-\frac{1}{2}}))  h^{k+1} \|\lambda\|_{k+1}.
$$
\end{theorem}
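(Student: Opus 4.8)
The plan is to bound $\|\epsilon_b\|_{0,*}$ by separating the boundary value $\epsilon_b$ into the ``jump'' part $\epsilon_0-\epsilon_b$, which is measured directly by the seminorm $\3bar\cdot\3bar_{W_h}$, and the interior trace of $\epsilon_0$, for which the $L^2$ estimate of Theorem \ref{Thm:H1errorestimate} is available. Concretely, by the triangle inequality on each $\partial T$,
$$
\|\epsilon_b\|_{0,*}^2=\sum_{T\in{\cal T}_h}h_T\|\epsilon_b\|_{\partial T}^2\leq 2\sum_{T\in{\cal T}_h}h_T\|\epsilon_0-\epsilon_b\|_{\partial T}^2+2\sum_{T\in{\cal T}_h}h_T\|\epsilon_0\|_{\partial T}^2.
$$

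For the first sum I would factor out $h_T^2\leq h^2$ and recognize the remainder as part of the $W_h$-seminorm, i.e. $\sum_{T}h_T\|\epsilon_0-\epsilon_b\|_{\partial T}^2\leq h^2\sum_T h_T^{-1}\|\epsilon_0-\epsilon_b\|_{\partial T}^2\leq h^2\3bar\epsilon_h\3bar_{W_h}^2$, and then invoke the estimate (\ref{erres}) of Theorem \ref{theoestimate} to obtain a bound of order $(1+\tau_2^{-\frac12})^2h^{2k+2}\|\lambda\|_{k+1}^2$. For the second sum, observe that $\epsilon_0=\lambda_0-Q_0\lambda$ is a polynomial of degree $\leq k$ on each element $T$, so the polynomial trace inequality (\ref{trace}) yields $h_T\|\epsilon_0\|_{\partial T}^2\leq C\|\epsilon_0\|_T^2$; summing over $T$ gives $\sum_T h_T\|\epsilon_0\|_{\partial T}^2\leq C\|\epsilon_0\|_0^2$, which is controlled by the $L^2$ estimate (\ref{e0-H1}) of Theorem \ref{Thm:H1errorestimate}.

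Combining the two bounds and taking square roots gives the assertion: since $(h+1+\tau_2^{\frac12})\geq 1$ we have $(1+\tau_2^{-\frac12})\leq 1+(h+1+\tau_2^{\frac12})(1+\tau_2^{-\frac12})$, so the $\|\epsilon_0\|_0$-contribution dominates and reproduces exactly the stated factor $1+(h+1+\tau_2^{\frac12})(1+\tau_2^{-\frac12})$. The argument is essentially bookkeeping once Theorems \ref{theoestimate} and \ref{Thm:H1errorestimate} are in hand; the only mild point of care---hardly an obstacle---is to use the polynomial trace inequality (\ref{trace}) rather than the generic trace inequality (\ref{tracein}) on the $\epsilon_0$ term, so that no $\|\nabla\epsilon_0\|$ term (and hence no inverse inequality) enters, and to keep track of the $\tau_2$-dependent constants so that they collapse into the single factor above.
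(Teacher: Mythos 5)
Your proof is correct and follows essentially the same route as the paper: split $\epsilon_b$ via the triangle inequality into $\epsilon_0$ and $\epsilon_0-\epsilon_b$, use the polynomial trace inequality (\ref{trace}) on the $\epsilon_0$ term and absorb the jump term into $h^2\3bar\epsilon_h\3bar_{W_h}^2$, then invoke (\ref{erres}) and (\ref{e0-H1}). The bookkeeping of the $\tau_2$-dependent constants is also handled as in the paper.
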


\begin{proof} On each element $T\in\T_h$, from the triangle inequality we have
$$
\|\epsilon_b\|_{\pT} \leq \|\epsilon_0\|_{\pT} + \|\epsilon_b-\epsilon_0\|_\pT.
$$
Thus, it follows from the trace inequality (\ref{trace}) that
\begin{equation*}
\begin{split}
\sum_{T\in\T_h} h_T \|\epsilon_b\|_{\pT}^2 & \leq C \sum_{T\in\T_h}
h_T\|\epsilon_0\|_{\pT}^2 + C h^2 \sum_{T\in\T_h}
h_T^{-1}\|\epsilon_b-\epsilon_0\|_\pT^2\\
& \leq C (\|\epsilon_0\|^2 + h^2 \3bar \epsilon_h\3bar_{W_h}^2),
\end{split}
\end{equation*}
which, together with the error estimates (\ref{erres}) and
(\ref{e0-H1}), completes the proof of the theorem.
\end{proof}

\section{Numerical Experiments}\label{Section:NE}
The primal dual weak Galerkin finite element scheme (\ref{al-general})-(\ref{al-general-2}) has been implemented for the case of $k=1$ and $k=2$ respectively. The objective of this section is to report some of the computational results.

For $k=1$, the numerical primal variable $\lambda_h$ and the dual variable $u_h$ are obtained from the following finite element spaces:
\begin{equation*}
\begin{split}
&W_h^{(1)}=\{\lambda_h=\{\lambda_0,\lambda_b\}: \ \lambda_0\in P_1(T), \lambda_b\in P_1(e), e\subset\pT, T\in {\cal T}_h\},\\
&M_{h}^{(1)}=\{u_h: u_h|_T \in P_{0}(T),\ \forall T\in {\cal T}_h\}.
\end{split}
\end{equation*}
For convenience, the corresponding finite element shall be referred to as the $C^{-1}- P_1(T)/P_1(\partial T)/P_{0}(T)$ element.

For $k=2$, the finite element spaces for $\lambda_h$ and $u_h$ are given as follows:
\begin{equation*}
\begin{split}
&W_h^{(2)}=\{\lambda_h=\{\lambda_0,\lambda_b\}: \ \lambda_0\in P_2(T), \lambda_b\in P_2(e), e\subset\pT, T\in {\cal T}_h\},\\
&M_{h}^{(2)}=\{u_h: u_h|_T \in P_{1}(T),\ \forall T\in {\cal T}_h\}.
\end{split}
\end{equation*}
The corresponding finite element shall be referred to as $C^{-1}- P_2(T)/P_2(\partial T)/P_{1}(T)$ element.

The numerical solution $\lambda_h=\{\lambda_0, \lambda_b\}\in W_h^{(k)}$ and $u_h \in M_{h}^{(k)}$ obtained from the PD-WG scheme (\ref{al-general})-(\ref{al-general-2}) is compared with the $L_2$ projection of the exact solution $\lambda$ and $u$. Note that $u_h$ approximates the trivial exact solution $u=0$. The error functions are respectively defined as
$$
\epsilon_0=\lambda_0-Q_0 \lambda,\quad \epsilon_b=\lambda_b -Q_b \lambda,  \quad \text{and}\quad e_h=u_h-{\cal Q}_h u=u_h.
$$
The following $L^2$ norms are employed to measure the error:
$$
 \|\epsilon_0\|= \left(\sum_{T\in {\cal T}_h} \int_T \epsilon_0^2 dT\right)^{\frac{1}{2}}, \qquad \|\epsilon_b\|=\left(\sum_{T\in {\cal T}_h}h_T \int_{\partial T} \epsilon_b^2 ds\right)^{\frac{1}{2}},
 $$
 $$
 \|e_h\|= \left(\sum_{T\in {\cal T}_h} \int_T e_h^2 dT\right)^{\frac{1}{2}}.
$$

The numerical experiments are conducted on several polygonal domains $\Omega_i$; some are convex and the others are non-convex. The domain $\Omega_1$ is convex and is chosen as the unit square $\Omega_1=(0,1)^2$. The domain $\Omega_2$ is L-shaped with vertices $A_1=(0, 0)$, $A_2=(1, 0)$, $A_3=(1, 0.5)$, $A_4=(0.5, 0.5)$, $A_5=(0.5, 1)$, and $A_6=(0, 1)$. The third domain is a cracked unit square given by $\Omega_3=(0, 1)^2\setminus(0.5, 1)*\{0.5\}$ with a crack along the edge $(0.5,1)\times\{0.5\}$. The inflow boundary $\Gamma_-$ is determined by the condition of $\bbeta \cdot \bn < 0$, where $\bn$ is the unit outward normal direction to $\partial \Omega$. The right-hand side function $f$ and the inflow Dirichlet boundary data $g$ are chosen to match the exact solution $\lambda$ if it is known for the test problem.

Uniform triangular and/or rectangular finite element partitions are considered in the numerical tests. The uniform triangulations are generated through a successive uniform refinement of a coarse triangulation of the domain by dividing each coarse triangular element into four congruent sub-triangles by connecting the mid-points on the three edges of the triangular element. The rectangular finite element partitions are generated through a successive uniform refinement of a coarse $3\times 2$ rectangular partition of the domain by dividing each coarse rectangular element into four congruent sub-rectangles by connecting the mid-points on the two parallel edges of the rectangular element respectively.

\subsection{Constant-valued convection vector $\bbeta$}
This test problem assumes the domain $\Omega_1$, the exact solution is given by $\lambda=\cos(x)\cos(y)$, the convection tensor is $\bbeta=[1, 1]$, and the reaction coefficient is $c=1$. Tables \ref{NE:TRI:Case1-1-1-1}-\ref{NE:REC:Case1-1-0-0} illustrate the numerical performance of the $C^{-1}-P_1(T)/P_1(\partial T)/P_0(T)$ element when triangular and rectangular partitions are employed. The numerical results in Tables \ref{NE:TRI:Case1-1-1-1}-\ref{NE:TRI:Case1-1-0-1} suggest that the convergence rates for $\epsilon_0$ and $\epsilon_b$ in the discrete $L^2$ norm are a bit higher than the optimal order of ${\cal O}(h^2)$ with the parameter value $(\tau_1, \tau_2)=(1, 1)$ and $(\tau_1, \tau_2)=(0, 1)$ respectively. The numerical results clearly outperform what the theory predicts. Tables \ref{NE:REC:Case1-1-1-1}-\ref{NE:REC:Case1-1-0-1} indicate that the convergence rates for $\epsilon_0$ and $\epsilon_b$ in the discrete $L^2$ norm are of order ${\cal O}(h^2)$ for the parameters $(\tau_1, \tau_2)=(1, 1)$ and $(\tau_1, \tau_2)=(0, 1)$ respectively, which are in good consistency with the theory.
Tables \ref{NE:TRI:Case1-1-0-0} and \ref{NE:REC:Case1-1-0-0}  illustrate the convergence rates for $\epsilon_0$ and $\epsilon_b$ in the discrete $L^2$ norm seem to reach an optimal order of ${\cal O}(h^2)$ for the parameters $(\tau_1, \tau_2)=(0, 0)$. Note that  the theory established in Theorems \ref{Thm:H1errorestimate}-\ref{Thm:L2errorestimate-ub} is not applicable to the case $\tau_2=0$.

\begin{table}[H]
\begin{center}
\caption{Numerical rates of convergence for the $C^{-1}-P_1(T)/P_1(\partial T)/P_0(T)$ element with the exact solution $\lambda=\cos(x)\cos(y)$ on the unit square domain $\Omega_1$; uniform triangular partitions; convection vector $\bbeta=[1, 1]$; reaction coefficient $c=1$; and the parameters $(\tau_1,\tau_2)=(1, 1)$.}\label{NE:TRI:Case1-1-1-1}
\begin{tabular}{|c|c|c|c|c|c|c|}
\hline
$1/h$        & $\|\epsilon_0\| $ &  order&  $\|\epsilon_b\| $ &  order&  $\|e_h\|$ &order
\\
\hline
1&        2.0109E-01  &&	         3.9891E-01 &                  & 3.0140E-03 	&
\\
\hline
2&        6.2996E-02 &	1.6745      &	1.1303E-01 & 1.8193       &6.9918E-03 	      &-1.2140
\\
\hline
4&          1.7817E-02 &1.8220 	   &2.9561E-02&1.9350         &	9.2100E-03        &-0.3975
\\
\hline
8&           3.8874E-03&2.1964        &6.0574E-03&	2.2869      &5.3950E-03          &0.7716
\\
\hline
16&         8.1581E-04 & 2.2525        & 1.2029E-03 &2.3321       &2.2502E-03	        &1.2616
\\
\hline
32&		      1.8214E-04&	2.1632      &2.5723E-04&	2.2254     &8.9122E-04         &1.3362
\\
\hline
\end{tabular}
\end{center}
\end{table}

\begin{table}[H]
\begin{center}
\caption{Numerical rates of convergence for the $C^{-1}-P_1(T)/P_1(\partial T)/P_0(T)$ element with the exact solution $\lambda=\cos(x)\cos(y)$ on the unit square domain $\Omega_1$; uniform triangular partitions; convection vector $\bbeta=[1, 1]$; reaction coefficient $c=1$; and the parameters $(\tau_1, \tau_2)=(0, 1)$.}\label{NE:TRI:Case1-1-0-1}
\begin{tabular}{|c|c|c|c|c|c|c|}
\hline
$1/h$        & $\|\epsilon_0\| $ &  order&  $\|\epsilon_b \| $ &  order&  $\|e_h \|$ &order
\\
\hline
1&        2.1739E-01  &&	         4.3538E-01 &                  & 8.0772E-03 	&
\\
\hline
2&        5.7342E-02 &	1.9227 &	1.0650E-01 & 2.0313       &1.4788E-02 	      &-0.8725
\\
\hline
4&          1.2879E-02 &2.1546 	   &2.2656E-02&	2.2329    &	 1.0705E-02        &0.4661
\\
\hline
8&            2.7182E-03&2.2443        &4.6597E-03&	2.2816    &5.9136E-03          &0.8562
\\
\hline
16&          5.9160E-04 & 2.1999        & 1.0064E-03 &2.2111    &3.0013E-03	        &0.9784
\\
\hline
32&		      1.3639E-04&	2.1169      &2.3136E-04&	2.1210    &1.5008E-03         &0.9999
\\
\hline
\end{tabular}
\end{center}
\end{table}

\begin{table}[H]
\begin{center}
\caption{Numerical rates of convergence for the $C^{-1}-P_1(T)/P_1(\partial T)/P_0(T)$ element with the exact solution $\lambda=\cos(x)\cos(y)$ on the unit square domain $\Omega_1$; uniform triangular partitions; convection vector $\bbeta=[1, 1]$; reaction coefficient $c=1$; and the parameters $(\tau_1, \tau_2)=(0, 0)$.}\label{NE:TRI:Case1-1-0-0}
\begin{tabular}{|c|c|c|c|c|c|c|}
\hline
$1/h$        & $\|\epsilon_0\| $ &  order&  $\|\epsilon_b \| $ &  order&  $\|e_h \|$ &order
\\
\hline
1&        1.9486E-01  &&	          3.9282E-01 &                  & 1.7572E-02 	&
\\
\hline
2&         4.6577E-02 &	2.0648 &	8.9249E-02 & 2.1380	       &1.7877E-02 	      &-0.0248
\\
\hline
4&           1.0883E-02 &2.0976 	   &1.9684E-02&	2.1808    &	 1.1270E-02        &0.6656
\\
\hline
8&            2.4728E-03&2.1379        &4.3116E-03&	2.1908    &5.9859E-03          &0.9128
\\
\hline
16&          5.6872E-04 & 2.1203        & 9.7480E-04 &2.1450    &3.0096E-03	        &0.9920
\\
\hline
32&		      1.3458E-04&	2.0793      &2.2889E-04&	2.0904     &1.5017E-03         &1.0030
\\
\hline
\end{tabular}
\end{center}
\end{table}

\begin{table}[H]
\begin{center}
\caption{Numerical rates of convergence for the $C^{-1}-P_1(T)/P_1(\partial T)/P_0(T)$ element with the exact solution $\lambda=\cos(x)\cos(y)$ on the unit square domain $\Omega_1$; uniform rectangular partitions; convection vector $\bbeta=[1, 1]$; reaction coefficient $c=1$; and the parameters $(\tau_1, \tau_2)=(1, 1)$.}\label{NE:REC:Case1-1-1-1}
\begin{tabular}{|c|c|c|c|c|c|c|}
\hline
$1/h$        & $\|\epsilon_0\| $ &  order&  $\|\epsilon_b \| $ &  order&  $\|e_h \|$ &order
\\
\hline
1&        5.7715E-02  &&	        1.7065E-01 &                  &3.3964E-02 	&
\\
\hline
2&         1.7618E-02 &	1.7119   &	4.3922E-02 &1.9580        &6.5677E-03 	      &2.3706
\\
\hline
4&         5.2857E-03 &1.7368  &1.1482E-02&1.9355             &1.8300E-03        &1.8435
\\
\hline
8&          1.3192E-03&2.0024        &2.5935E-03&2.1465        &8.3184E-04          &1.1375
\\
\hline
16&        3.1970E-04 &2.0449    &5.8821E-04 &2.1405         &3.2740E-04          &1.3453
\\
\hline
32&		7.8030E-05 &2.0346	    &1.3792E-04 &2.0926     	 &1.1616E-04       &1.4949
\\
\hline
\end{tabular}
\end{center}
\end{table}

\begin{table}[H]
\begin{center}
\caption{Numerical rates of convergence for the $C^{-1}-P_1(T)/P_1(\partial T)/P_0(T)$ element with the exact solution $\lambda=\cos(x)\cos(y)$ on the unit square domain $\Omega_1$; uniform rectangular partitions; convection vector $\bbeta=[1, 1]$; reaction coefficient $c=1$; and the parameters $(\tau_1, \tau_2)=(0, 1)$.}\label{NE:REC:Case1-1-0-1}
\begin{tabular}{|c|c|c|c|c|c|c|}
\hline
$1/h$        & $\|\epsilon_0\| $ &  order&  $\|\epsilon_b \| $ &  order&  $\|e_h \|$ &order
\\
\hline
1&        9.5831E-02  &&	        2.6881E-01 &                  &1.4294E-03 	&
\\
\hline
2&         2.0168E-02 &	2.2484   &	4.9829E-02 &2.4315        &2.2216E-03 	      &-0.6362
\\
\hline
4&         5.4250E-03 &1.8944   &1.1816E-02&2.0761              &1.8549E-03        &0.2603
\\
\hline
8&          1.3251E-03&2.0335        &2.6149E-03&2.1760        &8.9745E-04          &1.0475
\\
\hline
16&         3.1991E-04 &2.0504    &5.9016E-04 &2.1476         &3.3792E-04          &1.4092
\\
\hline
32&		 7.8039E-05 &2.0354	    &1.3815E-04 &2.0948     	 &1.1614E-04       &1.5408
\\
\hline
\end{tabular}
\end{center}
\end{table}

\begin{table}[H]
\begin{center}
\caption{Numerical rates of convergence for the $C^{-1}-P_1(T)/P_1(\partial T)/P_0(T)$ element with the exact solution $\lambda=\cos(x)\cos(y)$ on the unit square domain $\Omega_1$; uniform rectangular partitions; convection vector $\bbeta=[1, 1]$; reaction coefficient $c=1$; and the parameters $(\tau_1, \tau_2)=(0, 0)$.}\label{NE:REC:Case1-1-0-0}
\begin{tabular}{|c|c|c|c|c|c|c|}
\hline
$1/h$        & $\|\epsilon_0\| $ &  order&  $\|\epsilon_b \| $ &  order&  $\|e_h \|$ &order
\\
\hline
1&        9.5230E-02  &&	        2.6695E-01 &                  &1.5187E-03 	&
\\
\hline
2&         1.9874E-02 &	2.2605   &	4.9013E-02 &2.4453        &2.2637E-03 	      &-0.5758
\\
\hline
4&         5.3456E-03 &1.8945   &1.1620E-02&2.0765              &1.8751E-03        &0.2718
\\
\hline
8&          1.3145E-03&2.0238        &2.5890E-03&2.1661        &9.0089E-04          &1.0575
\\
\hline
16&         3.1893E-04 &2.0432    &5.8772E-04 &2.1392         &3.3841E-04          &1.4126
\\
\hline
32&		  7.7962E-05 &2.0324	    &1.3796E-04 &2.0909     	 &1.1621E-04       &1.5420
\\
\hline
\end{tabular}
\end{center}
\end{table}

Tables \ref{NE:TRI:Case1-L-1-1-1} - \ref{NE:TRI:Case1-L-1-0-0} present the numerical performance of $C^{-1}-P_1(T)/P_1(\partial T)/P_0(T)$ element for the uniform triangular partition on the non-convex L-shaped domain $\Omega_2$. The exact solution is $\lambda=\cos(x)\cos(y)$; the convection vector is $\bbeta=[1, 1]$; and the reaction coefficient is $c=1$. The parameters are taken as $(\tau_1, \tau_2)=(1, 1)$, $(\tau_1, \tau_2)=(0, 1)$, and $(\tau_1, \tau_2)=(0, 0)$, respectively. Tables \ref{NE:TRI:Case1-L-1-1-1} - \ref{NE:TRI:Case1-L-1-0-0} show that the convergence order for $\epsilon_0$ and $\epsilon_b$ in the discrete norm arrives at the optimal order of ${\cal O}(h^2)$.

\begin{table}[H]
\begin{center}
\caption{Numerical rates of convergence for the $C^{-1}-P_1(T)/P_1(\partial T)/P_0(T)$ element with the exact solution $\lambda=\cos(x)\cos(y)$ on the L-shaped domain $\Omega_2$; uniform triangular partitions; convection vector $\bbeta=[1, 1]$; reaction coefficient $c=1$; and the parameters $(\tau_1, \tau_2)=(1, 1)$.}\label{NE:TRI:Case1-L-1-1-1}
\begin{tabular}{|c|c|c|c|c|c|c|}
\hline
$1/h$        & $\|\epsilon_0\| $ &  order&  $\|\epsilon_b \| $ &  order&  $\|e_h \|$ &order
\\
\hline
1&        3.1337E-02  &&	        5.7371E-02 &                  &5.4743E-03 	&
\\
\hline
2&        8.5496E-03 &	1.8739 &	1.4445E-02 & 1.9897      &4.4327E-03 	      &0.3045
\\
\hline
4&         2.1599E-03 &1.9849	   &3.3992E-03&	2.0873    &	3.6959E-03        &0.2622
\\
\hline
8&         4.5758E-04&2.2389        &6.5979E-04&	2.3651    &1.8038E-03          &1.0349
\\
\hline
16&        1.0559E-04 &2.1155    &1.4455E-04 &2.1904    &6.9255E-04          &1.3810
\\
\hline
32&	2.5279E-05	   &2.0625	    &3.3466E-05 &2.1109	 &  2.9068E-04     &1.2525
\\
\hline
\end{tabular}
\end{center}
\end{table}

\begin{table}[H]
\begin{center}
\caption{Numerical rates of convergence for the $C^{-1}-P_1(T)/P_1(\partial T)/P_0(T)$ element with the exact solution $\lambda=\cos(x)\cos(y)$ on the L-shaped domain $\Omega_2$; uniform triangular partitions; convection vector $\bbeta=[1, 1]$; reaction coefficient $c=1$; and the parameters $(\tau_1, \tau_2)=(0, 1)$.}\label{NE:TRI:Case1-L-1-0-1}
\begin{tabular}{|c|c|c|c|c|c|c|}
\hline
$1/h$        & $\|\epsilon_0\| $ &  order&  $\|\epsilon_b \| $ &  order&  $\|e_h \|$ &order
\\
\hline
1&        2.9300E-02  &&	         5.9166E-02 &                  &1.2609E-02 	&
\\
\hline
2&         6.1821E-03 &	2.2448 &	1.2359E-02 & 2.2592       &8.2409E-03 	      &0.6136
\\
\hline
4&         1.3211E-03 &2.2264	   &2.6236E-03&	2.2359    &	4.6023E-03        &0.8404
\\
\hline
8&           2.6988E-04&2.2913        &5.5781E-04&	2.2337    &2.3407E-03          &0.9754
\\
\hline
16&         6.2113E-05 &2.1194     & 1.3109E-04 &2.0892    &1.1693E-03          &1.0013
\\
\hline
32&	1.4693E-05	   &2.0798    &3.1540E-05&2.0553	 &  5.8499E-04     &0.9991
\\
\hline
\end{tabular}
\end{center}
\end{table}

\begin{table}[H]
\begin{center}
\caption{Numerical rates of convergence for the $C^{-1}-P_1(T)/P_1(\partial T)/P_0(T)$ element with the exact solution $\lambda=\cos(x)\cos(y)$ on the L-shaped domain $\Omega_2$; uniform triangular partitions; convection vector $\bbeta=[1, 1]$; reaction coefficient $c=1$; and the parameters $(\tau_1, \tau_2)=(0, 0)$.}\label{NE:TRI:Case1-L-1-0-0}
\begin{tabular}{|c|c|c|c|c|c|c|}
\hline
$1/h$        & $\|\epsilon_0\| $ &  order&  $\|\epsilon_b \| $ &  order&  $\|e_h \|$ &order
\\
\hline
1&        2.8906E-02  &&	         5.8716E-02 &                  & 1.3434E-02 	&
\\
\hline
2&         6.1177E-03 &	2.2403 &	1.2295E-02 & 2.2557	       &8.3837E-03 	      &0.6803
\\
\hline
4&          1.3091E-03 &2.2244	   &2.6101E-03&	2.2359    &	4.6248E-03        &0.8582
\\
\hline
8&            2.6892E-04&2.2834        &5.5695E-04&	2.2285    &2.3432E-03          &0.9809
\\
\hline
16&         6.2049E-05 &2.1157     & 1.3104E-04 &2.0875    &1.1696E-03          &1.0025
\\
\hline
32&	1.4888E-05	   &2.0592	    &3.1537E-05&2.0549	 &5.8502E-04       &0.9994
\\
\hline
\end{tabular}
\end{center}
\end{table}

In Tables \ref{NE:TRI:Case1-2-1-1}-\ref{NE:TRI:Case1-L-2-0-0}, we demonstrate the numerical performance of the $C^{-1}-P_2(T)/P_2(\partial T)/P_1(T)$ element on uniform triangular partitions of the unit square domain $\Omega_1$ and the L-shaped domain $\Omega_2$, respectively. The exact solution is $\lambda=\cos(x)\cos(y)$; the convection vector is $\bbeta=[1, 1]$; and the reaction coefficient is $c=1$. The numerical results show that the convergence rates for $\epsilon_0$ and $\epsilon_b$ in the discrete $L^2$ norm are of the optimal order ${\cal O}(h^3)$ when the parameters are chosen as $(\tau_1, \tau_2)=(1, 1)$,  $(\tau_1, \tau_2)=(0, 1)$, and $(\tau_1, \tau_2)=(0, 0)$, respectively.

\begin{table}[H]
\begin{center}
\caption{Numerical rates of convergence for the $C^{-1}-P_2(T)/P_2(\partial T)/P_1(T)$ element with the exact solution $\lambda=\cos(x)\cos(y)$ on the unit square domain $\Omega_1$; uniform triangular partitions; convection vector $\bbeta=[1, 1]$; reaction coefficient $c=1$; and the parameters $(\tau_1, \tau_2)=(1, 1)$.}\label{NE:TRI:Case1-2-1-1}
\begin{tabular}{|c|c|c|c|c|c|c|}
\hline
$1/h$        & $\|\epsilon_0\| $ &  order&  $\|\epsilon_b \| $ &  order&  $\|e_h \|$ &order
\\
\hline
1&        2.2930E-02  &&	       3.7827E-02 &                  & 5.2346E-03 	&
\\
\hline
2&        2.7578E-03 &	3.0556      &4.7515E-03 &2.9929       &1.8394E-03 	      &1.5088
\\
\hline
4&          3.1406E-04 &3.1344 	   &5.5267E-04&3.1039        &	7.8572E-04        &1.2272
\\
\hline
8&           3.6798E-05&3.0933       &6.5190E-05&3.0837      &2.2526E-04          &1.8024
\\
\hline
16&         4.4211E-06 &3.0572       & 7.7955E-06 &3.0639       &5.9157E-05	        &1.9289
\\
\hline
32&		      5.4026E-07&3.0327    &9.4711E-07&	3.0410     &1.5124E-05         &1.9677
\\
\hline
\end{tabular}
\end{center}
\end{table}

\begin{table}[H]
\begin{center}
\caption{Numerical rates of convergence for the $C^{-1}-P_2(T)/P_2(\partial T)/P_1(T)$ element with the exact solution $\lambda=\cos(x)\cos(y)$ on the unit square domain $\Omega_1$; uniform triangular partitions; convection vector $\bbeta=[1, 1]$; reaction coefficient $c=1$; and the parameters $(\tau_1, \tau_2)=(0, 1)$.}\label{NE:TRI:Case1-2-0-1}
\begin{tabular}{|c|c|c|c|c|c|c|}
\hline
$1/h$        & $\|\epsilon_0\| $ &  order&  $\|\epsilon_b \| $ &  order&  $\|e_h \|$ &order
\\
\hline
1&        1.0238E-02  &&	        1.9301E-02 &                  &3.8199E-04 	&
\\
\hline
2&       2.4224E-03 &	2.0794   &	4.2489E-03 & 2.1835       &2.3881E-03 	      &-2.6443
\\
\hline
4&         2.8603E-04 &3.0822	   &5.4886E-04&	2.9526   &	 1.0196E-03        &1.2278
\\
\hline
8&           3.1690E-05&3.1741       &6.4620E-05&	3.0864   &3.0835E-04          &1.7254
\\
\hline
16&         3.5243E-06 & 3.1686       &7.5240E-06 &3.1024    &8.3450E-05	        &1.8856
\\
\hline
32&		    4.0429E-07&3.1239     &8.9192E-07&	3.0765   &2.1622E-05         &1.9484
\\
\hline
\end{tabular}
\end{center}
\end{table}

\begin{table}[H]
\begin{center}
\caption{Numerical rates of convergence for the $C^{-1}-P_2(T)/P_2(\partial T)/P_1(T)$ element with the exact solution $\lambda=\cos(x)\cos(y)$ on the unit square domain $\Omega_1$; uniform triangular partitions; convection vector $\bbeta=[1, 1]$; reaction coefficient $c=1$; and the parameters $(\tau_1, \tau_2)=(0, 0)$.}\label{NE:TRI:Case1-2-0-0}
\begin{tabular}{|c|c|c|c|c|c|c|}
\hline
$1/h$        & $\|\epsilon_0\| $ &  order&  $\|\epsilon_b \| $ &  order&  $\|e_h \|$ &order
\\
\hline
1&        1.0603E-02 &&	         1.9936E-02 &                  & 6.6104E-04 	&
\\
\hline
2&         2.8172E-03 &1.9122&    	4.9908E-03 &1.9980      &3.1747E-03 	      &-2.2638
\\
\hline
4&           2.9222E-04 &3.2691 	   &5.6321E-04&	3.1475   &	1.1058E-03        &1.5216
\\
\hline
8&            3.1924E-05&3.1944       &6.5173E-05&	3.1113    &3.1349E-04          &1.8186
\\
\hline
16&          3.5320E-06 & 3.1761        &7.5429E-06 &3.1111   &8.3750E-05	        &1.9042
\\
\hline
32&		     4.0453E-07&3.1262     &8.9255E-07&	3.0791     &2.1640E-05         &1.9524
\\
\hline
\end{tabular}
\end{center}
\end{table}

\begin{table}[H]
\begin{center}
\caption{Numerical rates of convergence for the $C^{-1}-P_2(T)/P_2(\partial T)/P_1(T)$ element with the exact solution $\lambda=\cos(x)\cos(y)$ on the L-shaped domain $\Omega_2$; uniform triangular partitions; convection vector $\bbeta=[1, 1]$; reaction coefficient $c=1$; and the parameters $(\tau_1, \tau_2)=(1, 1)$.}\label{NE:TRI:Case1-L-2-1-1}
\begin{tabular}{|c|c|c|c|c|c|c|}
\hline
$1/h$        & $\|\epsilon_0\| $ &  order&  $\|\epsilon_b \| $ &  order&  $\|e_h \|$ &order
\\
\hline
1&        1.8602E-03  &&	        3.1654E-03 &                  &2.9270E-03 	&
\\
\hline
2&         2.8228E-04 &	2.7202   &	5.0711E-04 &2.6420         &8.2733E-04 	      &1.8229
\\
\hline
4&          3.4014E-05 &3.0529   &6.1046E-05&3.0543              &	2.2506E-04        &1.8781
\\
\hline
8&           4.1542E-06&3.0335        &7.4088E-06&	  3.0426        &5.7479E-05          &1.9692
\\
\hline
16&         5.1021E-07 &3.0254    &9.0482E-07 &3.0335          &1.4583E-05          &1.9787
\\
\hline
32&		6.3109E-08   &	3.0152    &1.1151E-07&3.0204	 & 3.6764E-06      &1.9879
\\
\hline
\end{tabular}
\end{center}
\end{table}

\begin{table}[H]
\begin{center}
\caption{Numerical rates of convergence for the $C^{-1}-P_2(T)/P_2(\partial T)/P_1(T)$ element with the exact solution $\lambda=\cos(x)\cos(y)$ on the L-shaped domain $\Omega_2$; uniform triangular partitions; convection vector $\bbeta=[1, 1]$; reaction coefficient $c=1$; and the parameters $(\tau_1,\tau_2)=(0, 1)$.}\label{NE:TRI:Case1-L-2-0-1}
\begin{tabular}{|c|c|c|c|c|c|c|}
\hline
$1/h$        & $\|\epsilon_0\| $ &  order&  $\|\epsilon_b \| $ &  order&  $\|e_h \|$ &order
\\
\hline
1&        2.3256E-03  &&	        4.3470E-03 &                  &2.6712E-03 	&
\\
\hline
2&         2.7541E-04 &	3.0780   &	5.3573E-04 &3.0204            &1.0506E-03 	      &1.3463
\\
\hline
4&          2.9682E-05 &3.2140   &6.1448E-05&3.1241              &	3.0201E-04        &1.7985
\\
\hline
8&           3.3257E-06&3.1579        &7.1909E-06&	  3.0951         &8.0495E-05          &1.9076
\\
\hline
16&         3.8296E-07 &3.1184     &8.5521E-07 &3.0718          &2.0782E-05          &1.9536
\\
\hline
32&		4.5478E-08   &	3.0740    &1.0369E-07 &	3.0441 & 5.2789E-06      &1.9770
\\
\hline
\end{tabular}
\end{center}
\end{table}

\begin{table}[H]
\begin{center}
\caption{Numerical rates of convergence for the $C^{-1}-P_2(T)/P_2(\partial T)/P_1(T)$ element with the exact solution $\lambda=\cos(x)\cos(y)$ on the L-shaped domain $\Omega_2$; uniform triangular partitions; convection vector $\bbeta=[1, 1]$; reaction coefficient $c=1$; and the parameters $(\tau_1, \tau_2)=(0, 0)$.}\label{NE:TRI:Case1-L-2-0-0}
\begin{tabular}{|c|c|c|c|c|c|c|}
\hline
$1/h$        & $\|\epsilon_0\| $ &  order&  $\|\epsilon_b \| $ &  order&  $\|e_h \|$ &order
\\
\hline
1&        2.4722E-03  &&	       4.6393E-03 &                  &2.8720E-03 	&
\\
\hline
2&         2.7900E-04 &	3.1474   &	5.4309E-04 &3.0946            &1.0730E-03 	      &1.4204
\\
\hline
4&         2.9790E-05 &3.2274   &6.1690E-05&3.1381              &	3.0360E-04        &1.8214
\\
\hline
8&         3.3289E-06&3.1617        &7.1986E-06&3.0993         &8.0598E-05          &1.9134
\\
\hline
16&         3.8306E-07 &3.1194     &8.5546E-07 &3.0730         &2.0789E-05          &1.9549
\\
\hline
32&	4.5481E-08	   &3.0742	    &1.0369E-07&3.0444	 &5.2794E-04      &1.9774
\\
\hline
\end{tabular}
\end{center}
\end{table}

\subsection {Continuous convection vector $\bbeta$}\label{case2:variable coeff}  The numerical experiments in this subsection assume the exact solution $\lambda=\exp(x)\cos(y)$, the convection vector is given by $\bbeta=[0.5-y, x-0.5]$; the reaction coefficient is $c=0$, the parameters are taken as $(\tau_1, \tau_2)=(1, 1)$, \ $(\tau_1, \tau_2)=(0, 1)$ and $(\tau_1, \tau_2)=(0, 0)$.

Tables \ref{NE:TRI:£ºL-Case2-1-1-1}-\ref{NE:TRI:£ºCrack-Case2-1-0-0} show the numerical performance of the $C^{-1}-P_1(T)/P_1(\partial T)/P_0(T)$ element on the uniform triangular partition of the  L-shaped domain $\Omega_2$ and the cracked unit square domain $\Omega_3$, respectively.  The numerical results in Tables \ref{NE:TRI:£ºL-Case2-1-1-1}-\ref{NE:TRI:£ºCrack-Case2-1-0-0} indicate that the convergence rates for $\epsilon_0$ and $\epsilon_b$ in the discrete $L^2$ norm arrive at the optimal order of ${\cal O}(h^{2})$, which are consistent with the theory.

\begin{table}[H]
\begin{center}
\caption{Numerical rates of convergence for the $C^{-1}-P_1(T)/P_1(\partial T)/P_0(T)$ element with the exact solution $\lambda=\exp(x)\cos(y)$ on the L-shaped domain $\Omega_2$; uniform triangular partitions; convection vector $\bbeta=[0.5-y, x-0.5]$; reaction term $c=0$; and the parameters $(\tau_1, \tau_2)=(1, 1)$.}\label{NE:TRI:£ºL-Case2-1-1-1}
\begin{tabular}{|c|c|c|c|c|c|c|}
\hline
$1/h$        & $\|\epsilon_0\| $ &  order&  $\|\epsilon_b \| $ &  order&  $\|e_h \|$ &order
\\
\hline
1&       3.2415E-02  &&	      6.3078E-02 &                  &1.6843E-02 	&
\\
\hline
2&        1.1680E-02 &1.4726    &2.0531E-02 &1.6193             &2.2578E-02 	      &-0.4228
\\
\hline
4&         3.1123E-03 &1.9080	   &5.3381E-03&1.9434           &	2.2642E-02        &-0.0041
\\
\hline
8&        7.3609E-04&2.0800    &1.2796E-03&2.0607              & 1.7348E-02          &0.3843
\\
\hline
16&      1.7703E-04& 2.0559      &3.1225E-04&2.0349            &	1.1446E-02        &0.6000
\\
\hline
32&		4.4053E-05    & 2.0067  &7.7975E-05  &	2.0016     &  6.9526E-03     &0.7192
\\
\hline
\end{tabular}
\end{center}
\end{table}

\begin{table}[H]
\begin{center}
\caption{Numerical rates of convergence for the $C^{-1}-P_1(T)/P_1(\partial T)/P_0(T)$ element with the exact solution $\lambda=\exp(x)\cos(y)$ on the L-shaped domain $\Omega_2$; uniform triangular partitions; convection vector $\bbeta=[0.5-y, x-0.5]$; reaction term $c=0$; and the parameters $(\tau_1, \tau_2)=(0, 1)$.}\label{NE:TRI:£ºL-Case2-1-0-1}
\begin{tabular}{|c|c|c|c|c|c|c|}
\hline
$1/h$        & $\|\epsilon_0\| $ &  order&  $\|\epsilon_b \| $ &  order&  $\|e_h \|$ &order
\\
\hline
1&       1.9423E-02  &&	      3.8931E-02 &                  &1.8311E-02 	&
\\
\hline
2&        1.0108E-02 &0.9422    &1.7889E-02 &1.1219             &2.4481E-02 	      &-0.4190
\\
\hline
4&         3.1700E-03 &1.6730	   &5.4635E-03&1.7112           &	2.3976E-02        &0.0300
\\
\hline
8&         7.3660E-04&2.1055    &1.2945E-03&2.0774              & 1.8087E-02          &0.4067
\\
\hline
16&      1.7486E-04& 2.0746      &3.1319E-04&2.0473            &	1.1790E-02        &0.6173
\\
\hline
32&	4.3373E-05	    &2.0114   &7.8059E-05  &2.0044	     &  7.1064E-03    &0.7304
\\
\hline
\end{tabular}
\end{center}
\end{table}

\begin{table}[H]
\begin{center}
\caption{Numerical rates of convergence for the $C^{-1}-P_1(T)/P_1(\partial T)/P_0(T)$ element with the exact solution $\lambda=\exp(x)\cos(y)$ on the L-shaped domain $\Omega_2$; uniform triangular partitions; convection vector $\bbeta=[0.5-y, x-0.5]$; reaction $c=0$; and the parameters $(\tau_1, \tau_2)=(0, 0)$.}\label{NE:TRI:£ºL-Case2-1-0-0}
\begin{tabular}{|c|c|c|c|c|c|c|}
\hline
$1/h$        & $\|\epsilon_0\| $ &  order&  $\|\epsilon_b \| $ &  order&  $\|e_h \|$ &order
\\
\hline
1&       6.1648E-02  &&	      1.2097E-01 &                  &3.9537E-01 	&
\\
\hline
2&        1.6157E-02 &1.9319     &2.9034E-02 &2.0588             &1.6059E-01 	      &1.2999
\\
\hline
4&         2.9637E-03 &2.4468	   &5.4408E-03&2.4159           &	9.2294E-02        &0.7991
\\
\hline
8&         7.1138E-04&2.0587    &1.2945E-03&2.0714              & 4.8769E-02          &0.9203
\\
\hline
16&      1.7444E-04& 2.0279       &3.1599E-04&2.0345            &	2.5783E-02        &0.9196
\\
\hline
32&		4.3420E-05    & 2.0063  &7.8403E-05 &	2.0109     & 1.3545E-02      &0.9287
\\
\hline
\end{tabular}
\end{center}
\end{table}

\begin{table}[H]
\begin{center}
\caption{Numerical rates of convergence for the $C^{-1}-P_1(T)/P_1(\partial T)/P_0(T)$ element with the exact solution $\lambda=\exp(x)\cos(y)$ on the cracked domain $\Omega_3$; uniform triangular partitions; convection $\bbeta=[0.5-y, x-0.5]$; reaction $c=0$; and the parameters $(\tau_1, \tau_2)=(1, 1)$.}\label{NE:TRI:£ºCrack-Case2-1-1-1}
\begin{tabular}{|c|c|c|c|c|c|c|}
\hline
$1/h$        & $\|\epsilon_0\| $ &  order&  $\|\epsilon_b \| $ &  order&  $\|e_h \|$ &order
\\
\hline
1&        3.3510E-02  &   &	      6.4966E-02 &                  &2.1932E-02 	&
\\
\hline
2&       1.2997E-02 &1.3665     &2.2720E-02 &1.5157      &2.9850E-02 	      &-0.4447
\\
\hline
4&        4.8193E-03 &1.4312 	   &8.0114E-03&1.5039     &3.0696E-02        &-0.0403
\\
\hline
8&        1.6439E-03&1.5517     &2.6809E-03&1.5794     &2.3851E-02          &0.3640
\\
\hline
16&      4.3889E-04&1.9052        &7.1140E-04 &1.9140      &1.5641E-02	     &0.6087
\\
\hline
32&	1.1360E-04	    &1.9498   & 1.8362E-04     &	1.9539     &  9.4027E-03     &0.7342
\\
\hline
\end{tabular}
\end{center}
\end{table}

\begin{table}[H]
\begin{center}
\caption{Numerical rates of convergence for the $C^{-1}-P_1(T)/P_1(\partial T)/P_0(T)$ element with the exact solution $\lambda=\exp(x)\cos(y)$ on the cracked domain $\Omega_3$; uniform triangular partitions; convection $\bbeta=[0.5-y, x-0.5]$; reaction $c=0$; and the parameters $(\tau_1, \tau_2)=(0, 1)$.}\label{NE:TRI:£ºCrack-Case2-1-0-1}
\begin{tabular}{|c|c|c|c|c|c|c|}
\hline
$1/h$        & $\|\epsilon_0\| $ &  order&  $\|\epsilon_b \| $ &  order&  $\|e_h \|$ &order
\\
\hline
1&        1.9836E-02  &   &	      3.9620E-02 &                  &2.3612E-02 	&
\\
\hline
2&        1.1357E-02 &0.8046     &2.0050E-02 &0.9827      &3.2625E-02 	      &-0.4664
\\
\hline
4&        5.2721E-03 &1.1071 	   &8.7776E-03&1.1917     &3.2435E-02        &0.0084
\\
\hline
8&        1.7029E-03&1.6303     &2.7865E-03&1.6554      &2.4813E-02          &0.3865
\\
\hline
16&      4.4221E-04&1.9452         &7.1948E-04 &1.9534      &1.6095E-02	     &0.6245
\\
\hline
32&	1.1363E-04	    & 1.9604  &  1.8436E-04    &	1.9645     &  9.6079E-03     &0.7444
\\
\hline
\end{tabular}
\end{center}
\end{table}

\begin{table}[H]
\begin{center}
\caption{Numerical rates of convergence for the $C^{-1}-P_1(T)/P_1(\partial T)/P_0(T)$ element with the exact solution $\lambda=\exp(x)\cos(y)$ on the cracked domain $\Omega_3$; uniform triangular partitions; convection $\bbeta=[0.5-y, x-0.5]$; reaction $c=0$; and the parameters $(\tau_1, \tau_2)=(0, 0)$.}\label{NE:TRI:£ºCrack-Case2-1-0-0}
\begin{tabular}{|c|c|c|c|c|c|c|}
\hline
$1/h$        & $\|\epsilon_0\| $ &  order&  $\|\epsilon_b \| $ &  order&  $\|e_h \|$ &order
\\
\hline
1&        1.5431E-01  &   &	      2.7560E-01 &                  &8.0755E-01 	&
\\
\hline
2&        3.4877E-02 &2.1455     &5.9402E-02 &2.2140      &2.5660E-01 	      &1.6540
\\
\hline
4&        7.0917E-03 &2.2981 	   &1.1811E-02&2.3304      &1.4455E-01        &0.8280
\\
\hline
8&         1.8011E-03&1.9772      &2.9530E-03&1.9999       &7.4987E-02          &0.9469
\\
\hline
16&      4.5484E-04&1.9855          &7.4046E-04 &1.9957       &3.9028E-02	     &0.9421
\\
\hline
32&	1.1483E-04	    & 1.9858  &1.8634E-04      &1.9905	     &2.0217E-02       &0.9490
\\
\hline
\end{tabular}
\end{center}
\end{table}

Tables \ref{NE:TRI:£ºL-Case2-2-1-1}-\ref{NE:TRI:£ºCrack-Case2-2-0-0} illustrate the numerical results of the $C^{-1}-P_2(T)/P_2(\partial T)/P_1(T)$ element on the uniform triangular partition of the L-shaped domain $\Omega_2$ and the cracked unit square domain $\Omega_3$, respectively. The exact solution is given as $\lambda=\exp(x)\cos(y)$; the convection is $\bbeta=(0.5-y, x-0.5)$; and the reaction is $c=0$. The parameters are given by $(\tau_1, \tau_2)=(1, 1)$, $(\tau_1, \tau_2)=(0, 1)$ and $(\tau_1, \tau_2)=(0, 0)$, respectively. The numerical results in Tables \ref{NE:TRI:£ºL-Case2-2-1-1}-\ref{NE:TRI:£ºCrack-Case2-2-0-0} demonstrate that the convergence rates for $\epsilon_0$ and $\epsilon_b$ in the discrete $L^2$ norm arrive at the optimal order of ${\cal O}(h^3)$, which are consistent greatly with the theory.

\begin{table}[H]
\begin{center}
\caption{Numerical rates of convergence for the $C^{-1}-P_2(T)/P_2(\partial T)/P_1(T)$ element with the exact solution $\lambda=\exp(x)\cos(y)$ on the L-shaped domain $\Omega_2$; uniform triangular partitions; convection $\bbeta=[0.5-y, x-0.5]$; reaction $c=0$; and the parameters $(\tau_1, \tau_2)=(1, 1)$.}\label{NE:TRI:£ºL-Case2-2-1-1}
\begin{tabular}{|c|c|c|c|c|c|c|}
\hline
$1/h$        & $\|\epsilon_0\| $ &  order&  $\|\epsilon_b \| $ &  order&  $\|e_h \|$ &order
\\
\hline
1&      2.1302E-02  &&	      2.6687E-02 &                  &1.4892E-02 	&
\\
\hline
2&      2.3777E-03 &3.1634    &3.4624E-03 &2.9463           &6.4721E-03 	      &1.2023
\\
\hline
4&        1.3207E-04 &4.1702	   &1.9602E-04&4.1427           &1.9829E-03        &1.7066
\\
\hline
8&        9.2125E-06&3.8415  &1.4695E-05&3.7376             &5.9265E-04          &1.7424
\\
\hline
16&      9.5606E-07& 3.2684     &1.5840E-06&3.2137          &1.7710E-04        &1.7427
\\
\hline
32&	1.1556E-07	    & 3.0484  &1.9136E-07 &	3.0493     &5.1634E-05       &1.7781
\\
\hline
\end{tabular}
\end{center}
\end{table}

\begin{table}[H]
\begin{center}
\caption{Numerical rates of convergence for the $C^{-1}-P_2(T)/P_2(\partial T)/P_1(T)$ element  with the exact solution $\lambda=\exp(x)\cos(y)$ on the L-shaped domain $\Omega_2$; uniform triangular partitions; convection $\bbeta=[0.5-y, x-0.5]$; reaction $c=0$; and the parameters $(\tau_1,\tau_2)=(0, 1)$.}\label{NE:TRI:£ºL-Case2-2-0-1}
\begin{tabular}{|c|c|c|c|c|c|c|}
\hline
$1/h$        & $\|\epsilon_0\| $ &  order&  $\|\epsilon_b \| $ &  order&  $\|e_h \|$ &order
\\
\hline
1&      6.1064E-03  &&	      8.2459E-03 &                  &3.7357E-03 	&
\\
\hline
2&       1.2992E-03 &2.2327    &1.9879E-03 &2.0524            &3.2926E-03 	      &0.1822
\\
\hline
4&        9.3319E-05 &3.7993	   &1.4863E-04&3.7415           &1.6764E-03        &0.9739
\\
\hline
8&        8.4882E-06&3.4586   &1.4292E-05&3.3784              &5.9142E-04          &1.5031
\\
\hline
16&      9.5130E-07& 3.1575     &1.6239E-06&3.1377           &1.8308E-04        &1.6917
\\
\hline
32&	1.1425E-07	    &3.0578   &1.9409E-07   &	3.0647     &5.3445E-05       &1.7763
\\
\hline
\end{tabular}
\end{center}
\end{table}

\begin{table}[H]
\begin{center}
\caption{Numerical rates of convergence for the $C^{-1}-P_2(T)/P_2(\partial T)/P_1(T)$ element with the exact solution $\lambda=\exp(x)\cos(y)$ on the L-shaped domain $\Omega_2$; uniform triangular partitions; convection $\bbeta=[0.5-y, x-0.5]$; reaction $c=0$; and the parameters $(\tau_1, \tau_2)=(0, 0)$.}\label{NE:TRI:£ºL-Case2-2-0-0}
\begin{tabular}{|c|c|c|c|c|c|c|}
\hline
$1/h$        & $\|\epsilon_0\| $ &  order&  $\|\epsilon_b \| $ &  order&  $\|e_h \|$ &order
\\
\hline
1&       4.2050E-03  &&	      6.9789E-03 &                  &7.6006E-02 	&
\\
\hline
2&       8.6645E-04 &2.2789    &1.4333E-03 &2.2836             &1.7364E-02 	      &2.1300
\\
\hline
4&        7.7037E-05 &3.4915	   &1.3057E-04&3.4565           &	4.0621E-03        &2.0958
\\
\hline
8&        8.1542E-06&3.2399   &1.4018E-05&3.2195              &1.1028E-03          &1.8811
\\
\hline
16&      9.4470E-07& 3.1096      &1.6197E-06&3.1135           &2.9801E-04        &1.8877
\\
\hline
32&	1.1404E-07	    & 3.0503  &1.9391E-07 &	3.0622     & 7.9925E-05      &1.8986
\\
\hline
\end{tabular}
\end{center}
\end{table}

\begin{table}[H]
\begin{center}
\caption{Numerical rates of convergence for the $C^{-1}-P_2(T)/P_2(\partial T)/P_1(T)$ element with the exact solution $\lambda=\exp(x)\cos(y)$ on the cracked domain $\Omega_3$; uniform triangular partitions; convection $\bbeta=[0.5-y, x-0.5]$; reaction $c=0$; and the parameters $(\tau_1, \tau_2)=(1, 1)$.}\label{NE:TRI:£ºCrack-Case2-2-1-1}
\begin{tabular}{|c|c|c|c|c|c|c|}
\hline
$1/h$        & $\|\epsilon_0\| $ &  order&  $\|\epsilon_b \| $ &  order&  $\|e_h \|$ &order
\\
\hline
1&        2.6589E-02  &&	       3.2397E-02 &                  &1.8642E-02 	&
\\
\hline
2&        2.5601E-03 &3.3766      &3.6330E-03 &3.1566              &8.1520E-03 	      &1.1933
\\
\hline
4&         1.4880E-04 &4.1047	   &2.1653E-04&4.0685           &	2.5931E-03        &1.6525
\\
\hline
8&         1.0823E-05&3.7813     &1.6479E-05&3.7159               & 7.8293E-04          &1.7277
\\
\hline
16&      1.1555E-06& 3.2275       &1.7821E-06&3.2090            &	2.3144E-04        &1.7582
\\
\hline
32&	1.4005E-07	    & 3.0446  & 2.1456E-07  &	3.0542     & 6.6289E-05      &1.8038
\\
\hline
\end{tabular}
\end{center}
\end{table}

\begin{table}[H]
\begin{center}
\caption{Numerical rates of convergence for the $C^{-1}-P_2(T)/P_2(\partial T)/P_1(T)$ element with the exact solution $\lambda=\exp(x)\cos(y)$ on the cracked domain $\Omega_3$; uniform triangular partitions; convection $\bbeta=[0.5-y, x-0.5]$; reaction $c=0$; and the parameters $(\tau_1,\tau_2)=(0, 1)$.}\label{NE:TRI:£ºCrack-Case2-2-0-1}
\begin{tabular}{|c|c|c|c|c|c|c|}
\hline
$1/h$        & $\|\epsilon_0\| $ &  order&  $\|\epsilon_b \| $ &  order&  $\|e_h \|$ &order
\\
\hline
1&        6.4178E-03  &&	       8.7544E-03 &                  &5.9985E-03 	&
\\
\hline
2&        1.3561E-03 &2.2426     &2.0718E-03 &2.0791              &4.8223E-03 	      &0.3149
\\
\hline
4&         1.0909E-04 &3.6359	   &1.6899E-04&3.6159            &	2.2903E-03        &1.0742
\\
\hline
8&         9.9618E-06&3.4530     &1.5845E-05&3.4148               & 7.9144E-04          &1.5330
\\
\hline
16&      1.1336E-06& 3.1355        & 1.7974E-06&3.1401             &	2.4064E-04        &1.7176
\\
\hline
32&	1.3714E-07	    &3.0471   &2.1517E-07  &	3.0624     & 6.8983E-05      &1.8026
\\
\hline
\end{tabular}
\end{center}
\end{table}

\begin{table}[H]
\begin{center}
\caption{Numerical rates of convergence for the $C^{-1}-P_2(T)/P_2(\partial T)/P_1(T)$ element with the exact solution $\lambda=\exp(x)\cos(y)$ on the cracked domain $\Omega_3$; uniform triangular partitions; convection $\bbeta=[0.5-y, x-0.5]$; reaction $c=0$; and the parameters $(\tau_1, \tau_2)=(0, 0)$.}\label{NE:TRI:£ºCrack-Case2-2-0-0}
\begin{tabular}{|c|c|c|c|c|c|c|}
\hline
$1/h$        & $\|\epsilon_0\| $ &  order&  $\|\epsilon_b \| $ &  order&  $\|e_h \|$ &order
\\
\hline
1&        5.0789E-03  &&	       8.3811E-03 &                  &8.2058E-02 	&
\\
\hline
2&        9.6469E-04 &2.3964     &1.5700E-03 &2.4163      &2.0161E-02 	      &2.0251
\\
\hline
4&         9.1034E-05 &3.4056 	   &1.4755E-04&3.4116       &	4.9032E-03        &2.0398
\\
\hline
8&         9.7249E-06&3.2267      &1.5660E-05&3.2360     &1.3389E-03          &1.8727
\\
\hline
16&      1.1326E-06& 3.1021        & 1.7997E-06&3.1212       &	3.6043E-04        &1.8933
\\
\hline
32&	1.3717E-07	    &3.0456   &2.1528E-07  &3.0635	     &  9.6096E-05     &1.9072
\\
\hline
\end{tabular}
\end{center}
\end{table}

Tables \ref{NEE:TRI:Case-P1-0-1}-\ref{NEE:TRI:Case-P2-1-0}
illustrate the numerical results on the uniform triangular partition of the unit square domain $\Omega_1$. The exact solution is $\lambda=\sin(\pi x)\cos(\pi y)$; the convection is $\bbeta=[-y, x]$; and the reaction is $c=x+y$. The parameters are given by $(\tau_1, \tau_2)=(0, 1)$, $(\tau_1, \tau_2)=(1, 1)$, $(\tau_1, \tau_2)=(0, 0)$ and $(\tau_1, \tau_2)=(1, 0)$, respectively. The numerical results in Tables \ref{NEE:TRI:Case-P1-0-1}-\ref{NEE:TRI:Case-P1-1-0} demonstrate that the convergence rates for $\epsilon_0$ and $\epsilon_b$ in the discrete $L^2$ norm arrive at the optimal order of ${\cal O}(h^2)$ when the $C^{-1}-P_1(T)/P_1(\partial T)/P_0(T)$ element is employed. Tables \ref{NEE:TRI:Case-P2-0-1}-\ref{NEE:TRI:Case-P2-1-0} show that the convergence rates for $\epsilon_0$ and $\epsilon_b$ in the discrete $L^2$ norm are of the optimal order ${\cal O}(h^3)$ when the $C^{-1}-P_2(T)/P_2(\partial T)/P_1(T)$ element is used.

\begin{table}[H]
\begin{center}
\caption{Numerical rates of convergence for the $C^{-1}-P_1(T)/P_1(\partial T)/P_0(T)$ element  with the exact solution $\lambda=\sin(\pi x)\cos(\pi y)$ on the unit square domain $\Omega_1$; uniform triangular partitions; convection $\bbeta=[-y, x]$; reaction $c=x+y$; and the parameters $(\tau_1, \tau_2)=(0, 1)$.}\label{NEE:TRI:Case-P1-0-1}
\begin{tabular}{|c|c|c|c|c|c|c|}
\hline
$1/h$        & $\|\epsilon_0\| $ &  order&  $\|\epsilon_b \| $ &  order&  $\|e_h \|$&order
\\
\hline
1&        2.1635E-00  &&	       4.3282E-00 &                  &2.9913E-02 	&
\\
\hline
2&        6.8925E-01 &	1.6503 &	1.2858E-00 &	1.7511       &2.1249E-01	&  -2.8286
\\
\hline
4&         1.7782E-01 &1.9546	   &3.1162E-01&2.0448  &	1.4365E-01  &0.5648
\\
\hline
8&         3.4347E-02  &2.3722        &5.9643E-02& 2.3853    &9.7388E-02   &0.5608
\\
\hline
16&      7.8861E-03 & 2.1228      &1.3698E-02 &2.1224  &5.5483E-02	 &0.8117
\\
\hline
32&		1.9449E-03& 2.0196     &3.3694E-03& 2.0234   &2.9496E-02&0.9115
\\
\hline
\end{tabular}
\end{center}
\end{table}

\begin{table}[H]
\begin{center}
\caption{Numerical rates of convergence for the $C^{-1}-P_1(T)/P_1(\partial T)/P_0(T)$ element  with the exact solution $\lambda=\sin(\pi x)\cos(\pi y)$ on the unit square domain $\Omega_1$; uniform triangular partitions; convection $\bbeta=[-y, x]$; reaction $c=x+y$; and the parameters $(\tau_1, \tau_2)=(1, 1)$.}\label{NEE:TRI:Case-P1-1-1}
\begin{tabular}{|c|c|c|c|c|c|c|}
\hline
$1/h$        & $\|\epsilon_0\| $ &  order&  $\|\epsilon_b \| $ &  order&  $\|e_h \|$ &order
\\
\hline
1&      2.4003E-00  &&	        4.7903E-00 &                  &4.9317E-02 	&
\\
\hline
2&     6.8303E-01 &	1.8132 &	1.2888E-00 &1.8941       &1.7957E-01 	      &-1.8644
\\
\hline
4&       1.6889E-01 &2.0159	   &3.0049E-01&	2.1001    &1.4889E-01        &0.2702
\\
\hline
8&        3.2831E-02&2.3630     &5.6977E-02&2.3989   &9.5989E-02          &0.6333
\\
\hline
16&       7.7039E-03 & 2.0914       &1.3306E-02 &2.0983  &5.1431E-02	        &0.9002
\\
\hline
32&		 1.9429E-03& 1.9874     &3.3402E-03&1.9941   &2.6739E-02         &0.9437
\\
\hline
\end{tabular}
\end{center}
\end{table}

\begin{table}[H]
\begin{center}
\caption{Numerical rates of convergence for the $C^{-1}-P_1(T)/P_1(\partial T)/P_0(T)$ element  with the exact solution $\lambda=\sin(\pi x)\cos(\pi y)$ on the unit square domain $\Omega_1$; uniform triangular partitions; convection $\bbeta=[-y, x]$; reaction $c=x+y$; and the parameters $(\tau_1, \tau_2)=(0, 0)$.}\label{NEE:TRI:Case-P1-0-0}
\begin{tabular}{|c|c|c|c|c|c|c|}
\hline
$1/h$        & $\|\epsilon_0\| $ &  order&  $\|\epsilon_b \| $ &  order&  $\|e_h \|$ &order
\\
\hline
1&      3.1677E-00  &&	     6.4055E-00 &                  &1.2305E-00 	&
\\
\hline
2&      6.2169E-01 & 2.3491  &1.176E-00 &2.4451         &5.0819E-01 	      &1.2758
\\
\hline
4&        1.3100E-01 &2.2466	   &2.3434E-01&	2.3275    &2.6253E-01        &0.9529
\\
\hline
8&       3.1142E-02&2.0726    &5.4837E-02&2.0954   &1.2631E-01          &1.0554
\\
\hline
16&       7.7204E-03 & 2.0121       &1.3469E-02 &2.0255  &6.1632E-02	        &1.0353
\\
\hline
32&		1.9358E-03& 1.9957   &3.3574E-03&	2.0042  &3.0642E-02         &1.0082
\\
\hline
\end{tabular}
\end{center}
\end{table}

\begin{table}[H]
\begin{center}
\caption{Numerical rates of convergence for the $C^{-1}-P_1(T)/P_1(\partial T)/P_0(T)$ element  with the exact solution $\lambda=\sin(\pi x)\cos(\pi y)$ on the unit square domain $\Omega_1$; uniform triangular partitions; convection $\bbeta=[-y, x]$; reaction $c=x+y$; and the parameters $(\tau_1, \tau_2)=(1, 0)$.}\label{NEE:TRI:Case-P1-1-0}
\begin{tabular}{|c|c|c|c|c|c|c|}
\hline
$1/h$        & $\|\epsilon_0\| $ &  order&  $\|\epsilon_b \| $ &  order&  $\|e_h \|$ &order
\\
\hline
1&       2.7177E-00  &&	      5.5313E-00 &                  &1.3068E-00 	&
\\
\hline
2&       6.8277E-01 &1.9929    &	1.2846E-00 &2.1064      &6.1251E-01 	      &1.0932
\\
\hline
4&       1.3531E-01 &2.3351 	   &2.4105E-01&	2.4139    &2.9282E-01       &1.0647
\\
\hline
8&       3.1578E-02&2.0993      &5.5178E-02&2.1272   &1.2668E-01          &1.2089
\\
\hline
16&      7.7945E-03 & 2.0184        &1.3486E-02 &2.0326  &5.7844E-02	        &1.1309
\\
\hline
32&		 1.9530E-03& 1.9968      &3.3590E-03&2.0054   &2.7953E-02         &1.0492
\\
\hline
\end{tabular}
\end{center}
\end{table}

\begin{table}[H]
\begin{center}
\caption{Numerical rates of convergence for the $C^{-1}-P_2(T)/P_2(\partial T)/P_1(T)$ element  with the exact solution $\lambda=\sin(\pi x)\cos(\pi y)$ on the unit square domain $\Omega_1$; uniform triangular partitions; convection $\bbeta=[-y, x]$; reaction $c=x+y$; and the parameters $(\tau_1, \tau_2)=(0, 1)$.}\label{NEE:TRI:Case-P2-0-1}
\begin{tabular}{|c|c|c|c|c|c|c|}
\hline
$1/h$        & $\|\epsilon_0\| $ &  order&  $\|\epsilon_b \| $ &  order&  $\|e_h \|$ &order
\\
\hline
1&        8.4480E-01  &&	      1.3885E-00 &                  &2.6279E-01 	&
\\
\hline
2&        1.2875E-01 &	2.7140 &	2.1370E-01 &2.6999       &1.3221E-01	& 0.9910
\\
\hline
4&         1.5824E-02 &3.0244	   &2.5988E-02&3.0396  &	4.6963E-02  &1.4933
\\
\hline
8&         1.4710E-03  &3.4272        &2.4380E-03& 3.4141    &1.8782E-02   &1.3221
\\
\hline
16&     1.5733E-04 & 3.2249     &2.5893E-04 &3.2351  &6.3135E-03	 &1.5729
\\
\hline
32&		1.8166E-05& 3.1145     &2.9573E-05& 3.1302   &2.0863E-03&1.5975
\\
\hline
\end{tabular}
\end{center}
\end{table}

\begin{table}[H]
\begin{center}
\caption{Numerical rates of convergence for the $C^{-1}-P_2(T)/P_2(\partial T)/P_1(T)$ element  with the exact solution $\lambda=\sin(\pi x)\cos(\pi y)$ on the unit square domain $\Omega_1$; uniform triangular partitions; convection $\bbeta=[-y, x]$; reaction $c=x+y$; and the parameters $(\tau_1, \tau_2)=(1, 1)$.}\label{NEE:TRI:Case-P2-1-1}
\begin{tabular}{|c|c|c|c|c|c|c|}
\hline
$1/h$        & $\|\epsilon_0\| $ &  order&  $\|\epsilon_b \| $ &  order&  $\|e_h \|$ &order
\\
\hline
1&     3.8837E-00  &&	        5.4854E-00 &                  &5.6478E-01 	&
\\
\hline
2&     1.7181E-01 &	4.4985 &	2.7930E-01 &4.2957       &1.5540E-01 	      &1.8617
\\
\hline
4&       2.4360E-02 &2.8183	   &3.7158E-02&	2.9101    &5.5710E-02        &1.4800
\\
\hline
8&        1.7514E-03&3.7979     &2.8035E-03&3.7284   &1.8484E-02          &1.5916
\\
\hline
16&       1.7571E-04 & 3.3173       &2.8321E-04 &3.3073  &6.2443E-03	        &1.5657
\\
\hline
32&		 1.9994E-05& 3.1355    &3.1905E-05&3.1500  &2.0630E-02         &1.5978
\\
\hline
\end{tabular}
\end{center}
\end{table}

\begin{table}[H]
\begin{center}
\caption{Numerical rates of convergence for the $C^{-1}-P_2(T)/P_2(\partial T)/P_1(T)$ element  with the exact solution $\lambda=\sin(\pi x)\cos(\pi y)$ on the unit square domain $\Omega_1$; uniform triangular partitions; convection $\bbeta=[-y, x]$; reaction $c=x+y$; and the parameters $(\tau_1, \tau_2)=(0, 0)$.}\label{NEE:TRI:Case-P2-0-0}
\begin{tabular}{|c|c|c|c|c|c|c|}
\hline
$1/h$        & $\|\epsilon_0\| $ &  order&  $\|\epsilon_b \| $ &  order&  $\|e_h \|$ &order
\\
\hline
1&     6.9711E-01  &&	     1.4242E-00 &                  &5.8715E-00 	&
\\
\hline
2&     1.7579E-01 & 1.9875  &3.0395E-01 &2.2282        &1.8269E-00 	      &1.6844
\\
\hline
4&       1.6556E-02 &3.4085	   &2.7948E-02&	3.4430    &4.7677E-01        &1.9380
\\
\hline
8&       1.4546E-03&3.5086    &2.4227E-03&3.5280  &1.2208E-01          &1.9654
\\
\hline
16&      1.5136E-04 & 3.2646       &2.4989E-04 &3.2773  &3.1089E-02	        &1.9734
\\
\hline
32&		1.7656E-05& 3.0998  &2.8785E-05&3.1179  &7.9238E-03         &1.9722
\\
\hline
\end{tabular}
\end{center}
\end{table}

\begin{table}[H]
\begin{center}
\caption{Numerical rates of convergence for the $C^{-1}-P_2(T)/P_2(\partial T)/P_1(T)$ element  with the exact solution $\lambda=\sin(\pi x)\cos(\pi y)$ on the unit square domain $\Omega_1$; uniform triangular partitions; convection $\bbeta=[-y, x]$; reaction $c=x+y$; and the parameters $(\tau_1, \tau_2)=(1, 0)$.}\label{NEE:TRI:Case-P2-1-0}
\begin{tabular}{|c|c|c|c|c|c|c|}
\hline
$1/h$        & $\|\epsilon_0\| $ &  order&  $\|\epsilon_b \| $ &  order&  $\|e_h \|$ &order
\\
\hline
1&       1.0306E-00  &&	      1.8490E-00 &                  &8.8351E-00 	&
\\
\hline
2&       1.6984E-01 &2.6012   &	2.9961E-01 &2.6256      &1.9752E-00 	      &2.1613
\\
\hline
4&       1.8233E-02 &3.2195 	   &3.0485E-02&	3.2969    &4.8981E-01       &2.0117
\\
\hline
8&       1.6511E-03&3.4651     &2.6914E-03&3.5017   &1.2298E-01          &1.9938
\\
\hline
16&      1.6687E-04 & 3.3067        &2.7016E-04 &3.3165  &3.1170E-02	        &1.9801
\\
\hline
32&		1.9382E-05& 3.1059     &3.0957E-05&3.1255  &7.9248E-03         &1.9757
\\
\hline
\end{tabular}
\end{center}
\end{table}

\subsection{Discontinuous convection $\bbeta$}
The numerical tests are conducted for the $C^{-1}-P_1(T)/P_1(\partial T)/P_0(T)$ element on uniform triangular partitions of the unit square domain $\Omega_1$. The exact solution is given by $\lambda=\sin(x)\cos(y)$. The convection $\bbeta(x, y)$ is piece-wise defined in the sense that $\bbeta(x, y)=[1, -1]$ for $y<1-x$ and $\bbeta(x, y)=[-2, 2]$ otherwise. The reaction is $c=1$. The parameters are $(\tau_1, \tau_2)=(1, 1)$, $(\tau_1, \tau_2)=(0, 1)$ and $(\tau_1, \tau_2)=(0, 0)$, respectively. The numerical results in Tables \ref{NE:TRI:£ºL-Case3-1-1-1}-\ref{NE:TRI:£ºL-Case3-1-0-0} show that the convergence rates for $\epsilon_0$ and $\epsilon_b$ in the discrete $L^2$ norm are of the optimal order ${\cal O} (h^2)$, which are consistent with the theory.

\begin{table}[H]
\begin{center}
\caption{Numerical rates of convergence for the $C^{-1}-P_1(T)/P_1(\partial T)/P_0(T)$ element with the exact solution $\lambda=\sin(x)\cos(y)$ on the unit square domain $\Omega_1=(0,1)^2$; uniform triangular partitions; convection $\bbeta=[1, -1]$ for $y<1-x$ and $\bbeta=[-2, 2]$ otherwise; reaction $c=1$; and the parameters $(\tau_1, \tau_2)=(1, 1)$.}\label{NE:TRI:£ºL-Case3-1-1-1}
\begin{tabular}{|c|c|c|c|c|c|c|}
\hline
$1/h$        & $\|\epsilon_0\| $ &  order&  $\|\epsilon_b \| $ &  order&  $\|e_h \|$ &order
\\
\hline
1&       5.2713E-02  &&	      9.8682E-02 &                  &3.0619E-03 	&
\\
\hline
2&       1.1336E-02 &2.2173  &2.0662E-02 &2.2558           &1.5735E-03 	      &0.9604
\\
\hline
4&        2.7930E-03 &2.0210	&4.8760E-03 &2.0832          &	9.8137E-04        &0.6812
\\
\hline
8&       6.8883E-04&2.0196   &  1.1677E-03&2.0620            & 5.1897E-04          &0.9192
\\
\hline
16&      1.7169E-04& 2.0043     & 2.8597E-04&2.0297           &2.6273E-04        &0.9820
\\
\hline
32&		4.2912E-05&2.0004   & 7.0790E-05 & 2.0143       & 1.3186E-04         &0.9946
\\
\hline
\end{tabular}
\end{center}
\end{table}

\begin{table}[H]
\begin{center}
\caption{Numerical rates of convergence for the $C^{-1}-P_1(T)/P_1(\partial T)/P_0(T)$ element with exact solution $\lambda=\sin(x)\cos(y)$ on the unit square domain $\Omega_1$; uniform triangular partitions; convection $\bbeta=[1, -1]$ for $y<1-x$ and $\bbeta=[-2, 2]$ otherwise; reaction $c=1$; and the parameters $(\tau_1, \tau_2)=(0, 1)$.}\label{NE:TRI:£ºL-Case3-1-0-1}
\begin{tabular}{|c|c|c|c|c|c|c|}
\hline
$1/h$        & $\|\epsilon_0\| $ &  order&  $\|\epsilon_b \| $ &  order&  $\|e_h \|$ &order
\\
\hline
1&       3.2120E-02  &&	      7.0574E-02 &                  &9.0720E-03 	&
\\
\hline
2&       1.1795E-02 &1.4452   &2.1695E-02 &1.7018            &2.3864E-03 	      &1.9266
\\
\hline
4&        2.8507E-03 &2.0489	&4.9996E-03 &2.1175           &	9.1523E-04        &1.3826
\\
\hline
8&        7.0230E-04&2.0197   &  1.1936E-03&2.0664             & 4.2927E-04          &1.0922
\\
\hline
16&      1.7520E-04& 2.0045      & 2.9198E-04&2.0315           &2.1125E-04        &1.0229
\\
\hline
32&4.3771E-05		&  2.0009 &7.2224E-05  & 2.0153       & 1.0519E-04         &1.0059
\\
\hline
\end{tabular}
\end{center}
\end{table}

\begin{table}[H]
\begin{center}
\caption{Numerical rates of convergence for the $C^{-1}-P_1(T)/P_1(\partial T)/P_0(T)$ element with the exact solution $\lambda=\sin(x)\cos(y)$ on the unit square domain $\Omega_1$; uniform triangular partitions; convection $\bbeta=[1, -1]$ for $y<1-x$ and $\bbeta=[-2, 2]$ otherwise; reaction $c=1$; and the parameters $(\tau_1, \tau_2)=(0, 0)$.}\label{NE:TRI:£ºL-Case3-1-0-0}
\begin{tabular}{|c|c|c|c|c|c|c|}
\hline
$1/h$        & $\|\epsilon_0\| $ &  order&  $\|\epsilon_b \| $ &  order&  $\|e_h \|$ &order
\\
\hline
1&       4.0936E-02  &&	      9.0019E-02 &                  &1.3976E-02 	&
\\
\hline
2&        1.2259E-02 &1.7396    &2.2582E-02 &1.9951            &3.0632E-03 	      &2.1899
\\
\hline
4&         2.9157E-03 &2.0719	&5.1131E-03 &2.1429           &	1.2530E-03        &1.2896
\\
\hline
8&        7.1768E-04&2.0224    &  1.2183E-03&2.0693             & 5.9790E-04          &1.0675
\\
\hline
16&      1.7876E-04& 2.0053      & 2.9787E-04&2.0321           &2.9519E-04        &1.0183
\\
\hline
32&		 4.4655E-05  & 2.0012     & 7.3676E-05 &2.0154	          & 1.4707E-04           &1.0051
\\
\hline
\end{tabular}
\end{center}
\end{table}

Figures \ref{case1-1}-\ref{case1-2} illustrate the plots of the numerical solution $\lambda_0$ arising from the PD-WG scheme \eqref{al-general}-\eqref{al-general-2} when the $C^{-1}-P_1(T)/P_1(\partial T)/P_0(T)$ element and the $C^{-1}-P_2(T)/P_2(\partial T)/P_1(T)$ element are employed, respectively.
Uniform triangular partitions are employed in the numerical experiments on the unit square domain $\Omega_1$. The configuration of the test problem is as follows: (1) the convection is piece-wisely defined in the sense that $\bbeta(x, y)=[1, -1]$ for $y<1-x$ and $\bbeta(x, y)=[-2, 2]$ elsewhere; (2) the reaction is $c=0$; (3) the load function is $f=0$; and (4) the inflow boundary data is given by $g=1$ on the inflow boundary edge $\{0\}*(0,1)$ and by $g=-1$ on the inflow boundary edge $\{1\}*(0, 1)$. The parameters are given by $(\tau_1, \tau_2)=(1, 1)$. The left ones in Figures \ref{case1-1}-\ref{case1-2} present the contour plots of the numerical solution $\lambda_0$; and the right ones demonstrate the surface plots for the numerical solution $\lambda_0$. It is easy to see that the numerical solution $\lambda_0$ arising from the PD-WG scheme \eqref{al-general}-\eqref{al-general-2} is consistent with the exact solution $\lambda$ of the model problem \eqref{model} in this test problem.

\begin{figure}[h]
\centering
\begin{tabular}{cc}
\resizebox{2.4in}{2.1in}{\includegraphics{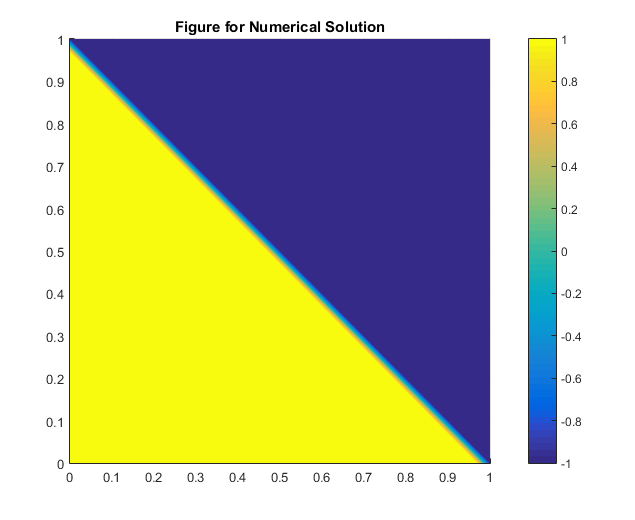}}
\resizebox{2.4in}{2.1in}{\includegraphics{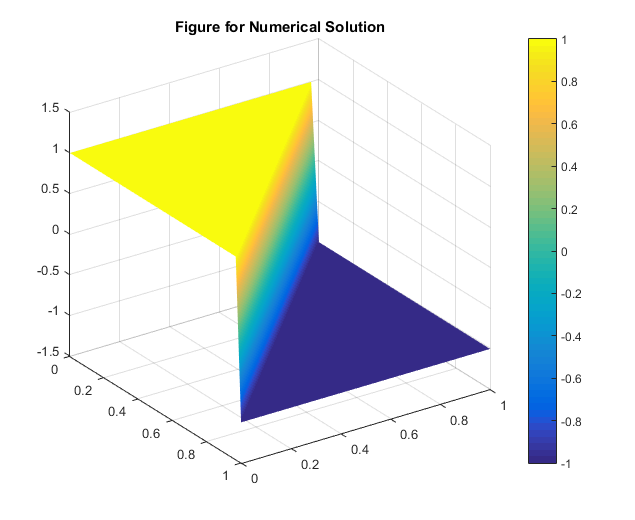}}
\end{tabular}
\caption{Plots of numerical solution $\lambda_0$ on the unit square domain $\Omega_1$; $C^{-1}-P_1(T)/P_1(\partial T)/P_0(T)$ element; uniform triangular partitions; convection $\bbeta=[1, -1]$ for $y<1-x$ and $\bbeta=[-2, 2]$ elsewhere; reaction $c=0$; the load function $f=0$; the inflow boundary data $g=1$ on the inflow boundary edge  $\{0\}*(0,1)$ and $g=-1$ on the inflow boundary edge $\{1\}*(0, 1)$; and $(\tau_1, \tau_2)=(1,1)$. Contour plot (left); surface plot (right).}
\label{case1-1}
\end{figure}

\begin{figure}[h]
\centering
\begin{tabular}{cc}
\resizebox{2.4in}{2.1in}{\includegraphics{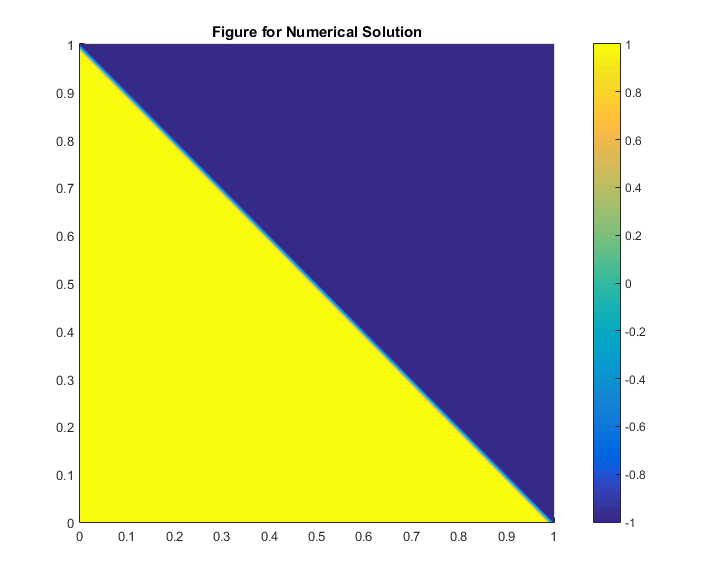}}
\resizebox{2.4in}{2.1in}{\includegraphics{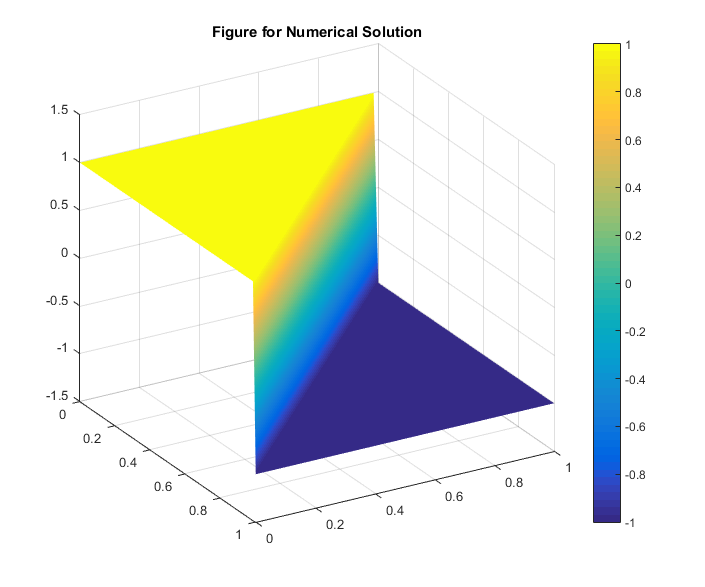}}
\end{tabular}
\caption{Plots of numerical solution $\lambda_0$ on the unit square domain $\Omega_1$; $C^{-1}-P_2(T)/P_2(\partial T)/P_1(T)$ element; uniform triangular partitions; convection $\bbeta=[1, -1]$ for $y<1-x$ and $\bbeta=[-2, 2]$ elsewhere; reaction $c=0$;  the load function $f=0$; the inflow boundary data $g=1$ on the inflow boundary edge  $\{0\}*(0,1)$ and $g=-1$ on the inflow boundary edge $\{1\}*(0, 1)$; and $(\tau_1, \tau_2)=(1,1)$. Contour plot (left); surface plot (right).}
\label{case1-2}
\end{figure}

\subsection{Plots for numerical solution $\lambda_0$ when the exact solution $\lambda$ is unknown}
Figures \ref{case5-1}-\ref{case5-2} show the contour plots of the numerical solution $\lambda_0$ on the uniform triangular partition of the unit square domain $\Omega_1$ when the $C^{-1}-P_1(T)/P_1(\partial T)/P_0(T)$ element and the $C^{-1}-P_2(T)/P_2(\partial T)/P_1(T)$ element are employed respectively. The configuration of this test problem is as follows: the convection vector $\bbeta=[0.5-y, x-0.5]$, the reaction $c=1$, the inflow boundary data $g=\cos(5y)$, and the parameters $(\tau_1, \tau_2)=(1,1)$. The left ones and the right ones in Figures \ref{case5-1}-\ref{case5-2}  demonstrate the contour plots of the numerical solution $\lambda_0$ corresponding to the load function $f=1$ and the load function $f=0$, respectively.

\begin{figure}[h]
\centering
\begin{tabular}{cc}
\resizebox{2.4in}{2.1in}{\includegraphics{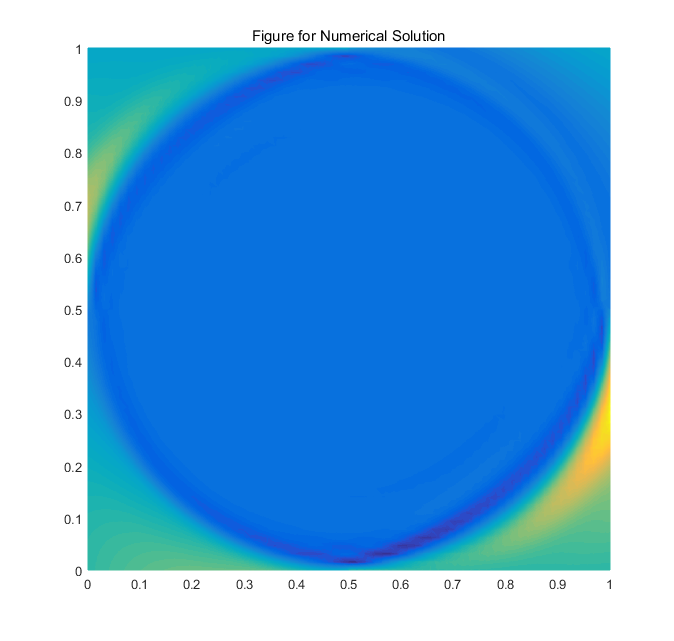}}
\resizebox{2.4in}{2.1in}{\includegraphics{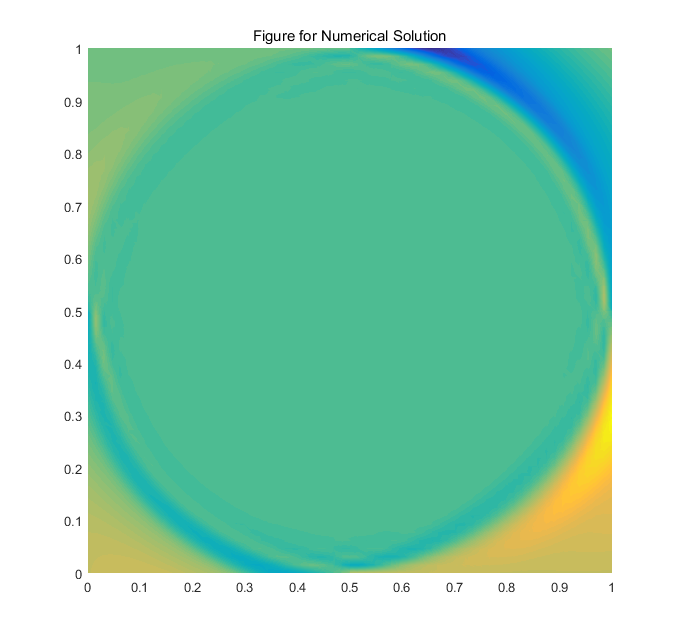}}
\end{tabular}
\caption{Contour plots of numerical solution $\lambda_0$ on the unit square domain $\Omega_1$; $C^{-1}-P_1(T)/P_1(\partial T)/P_0(T)$ element; uniform triangular partitions; the convection $\bbeta=[0.5-y, x-0.5]$; the reaction $c=1$; the inflow boundary data $g=\cos(5y)$; and $(\tau_1, \tau_2)=(1,1)$. The load function $f=1$ (left); the load function $f=0$ (right).}
\label{case5-1}
\end{figure}

\begin{figure}[h]
\centering
\begin{tabular}{cc}
\resizebox{2.4in}{2.1in}{\includegraphics{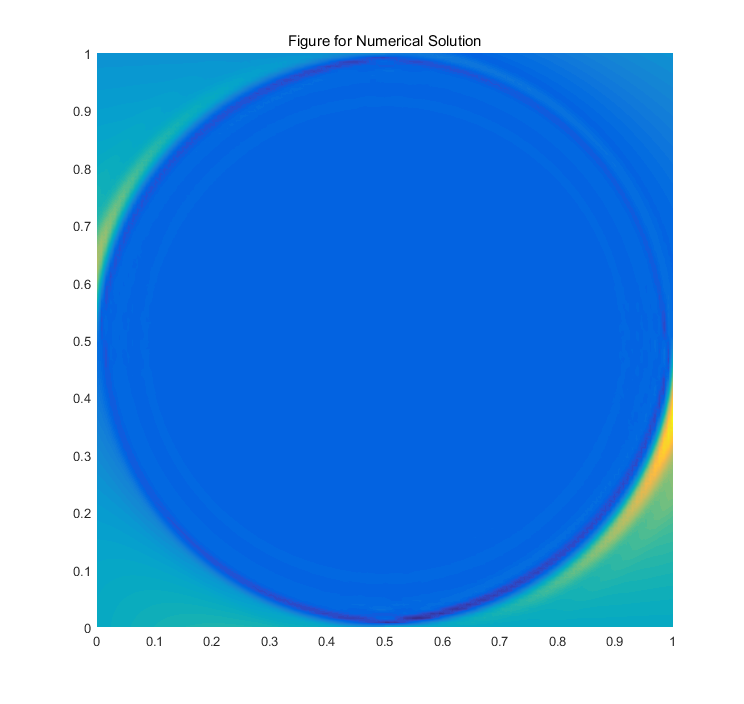}}
\resizebox{2.4in}{2.1in}{\includegraphics{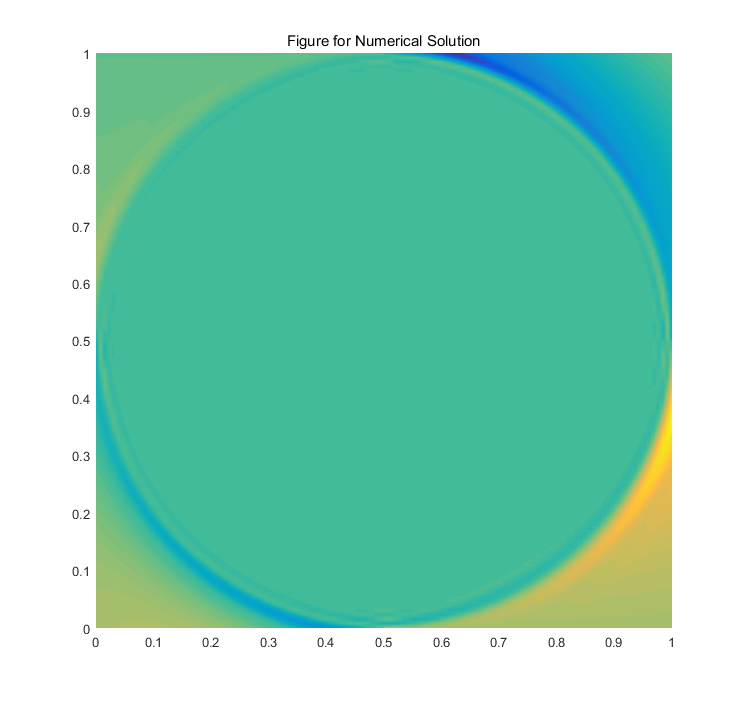}}
\end{tabular}
\caption{Contour plots of numerical solution $\lambda_0$ on the unit square domain $\Omega_1$; $C^{-1}-P_2(T)/P_2(\partial T)/P_1(T)$ element; uniform triangular partitions; convection $\bbeta=[0.5-y, x-0.5]$; reaction $c=1$; the inflow boundary data $g=\cos(5y)$; and $(\tau_1, \tau_2)=(1,1)$. The load function $f=1$ (left); the load function $f=0$ (right).}
\label{case5-2}
\end{figure}

Figures \ref{case3-1}-\ref{case3-2} show the contour plots of the numerical solution $\lambda_0$ on the uniform triangular partition of the unit square domain $\Omega_1$ for the $C^{-1}-P_1(T)/P_1(\partial T)/P_0(T)$ element and the $C^{-1}-P_2(T)/P_2(\partial T)/P_1(T)$ element respectively. The convection vector is piece-wisely defined in the sense that $\bbeta(x, y)=[-y, x]$ for $y<1-x$ and $\bbeta(x, y)=[1-y, x-1]$ otherwise. The  reaction is $c=0$. The inflow boundary data is given by $g=\sin(3x)\cos(5y)$. The parameters are $(\tau_1, \tau_2)=(1,1)$. Figures \ref{case3-1}-\ref{case3-2} demonstrate the contour plots of the numerical solution $\lambda_0$ for the load function $f=1$ (left figures) and the load function $f=0$ (right figures), respectively.

\begin{figure}[h]
\centering
\begin{tabular}{cc}
\resizebox{2.4in}{2.1in}{\includegraphics{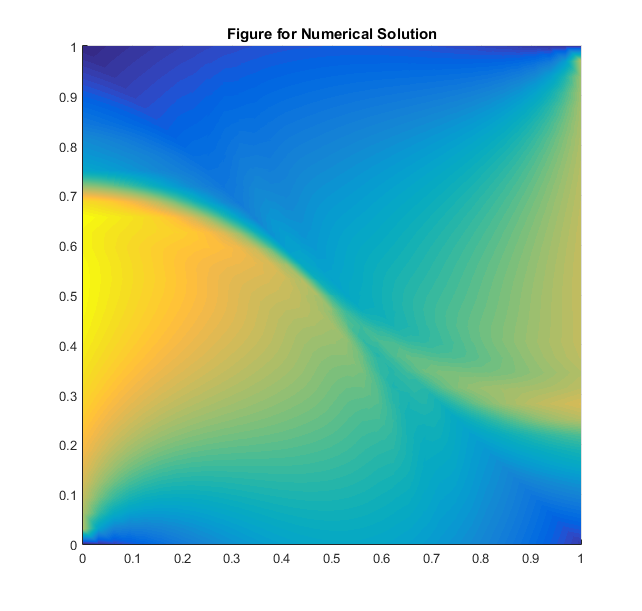}}
\resizebox{2.4in}{2.1in}{\includegraphics{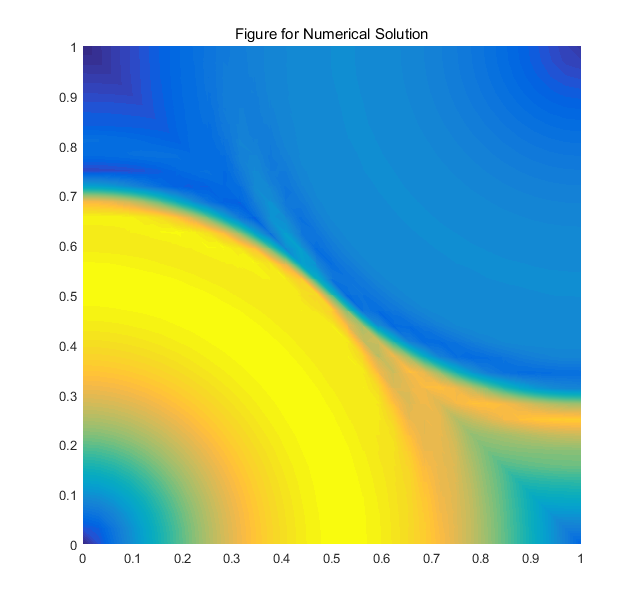}}
\end{tabular}
\caption{Contour plots of numerical solution $\lambda_0$ on the unit square domain $\Omega_1$; $C^{-1}-P_1(T)/P_1(\partial T)/P_0(T)$ element; uniform triangular partitions; convection $\bbeta=[-y, x]$ for $y<1-x$ and $\bbeta=[1-y, x-1]$ otherwise; reaction $c=0$; the inflow boundary data $g=\sin(3x)\cos(5y)$; and $(\tau_1, \tau_2)=(1,1)$. The load function $f=1$ (left); the load function $f=0$ (right).}
\label{case3-1}
\end{figure}

\begin{figure}[h]
\centering
\begin{tabular}{cc}
\resizebox{2.4in}{2.1in}{\includegraphics{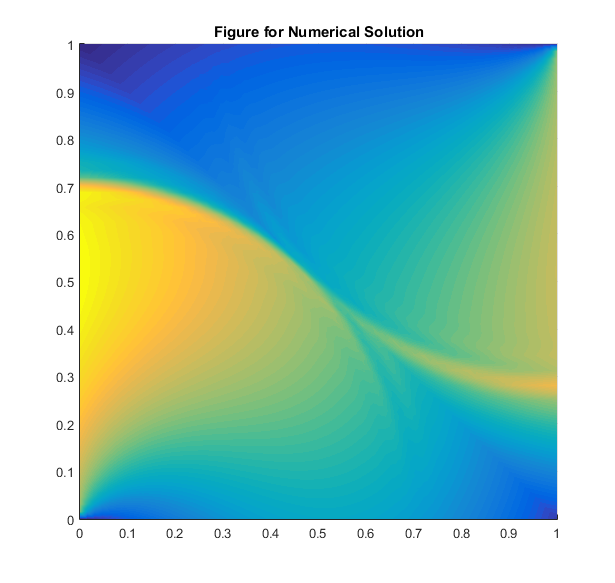}}
\resizebox{2.4in}{2.1in}{\includegraphics{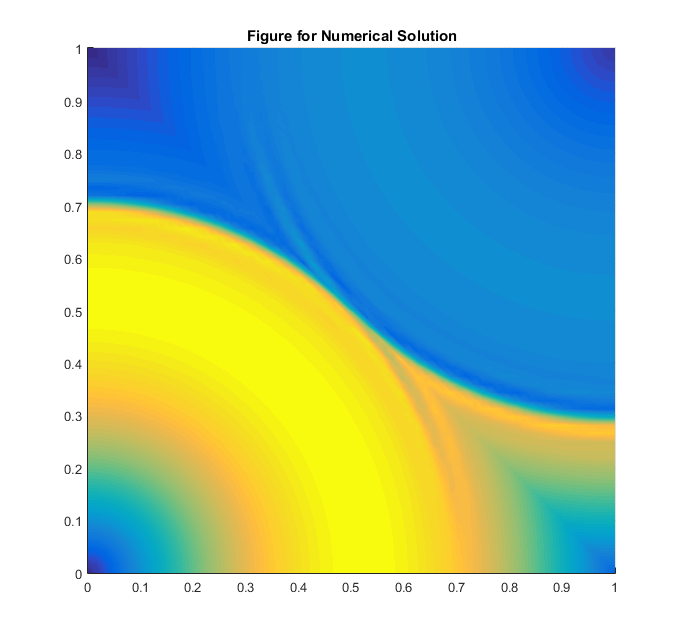}}
\end{tabular}
\caption{Contour plots of numerical solution $\lambda_0$ on the unit square domain $\Omega_1$; $C^{-1}-P_2(T)/P_2(\partial T)/P_1(T)$ element; uniform triangular partitions; convection $\bbeta=[-y, x]$ for $y<1-x$ and $\bbeta=[1-y, x-1]$ otherwise; reaction $c=0$; the inflow boundary data $g=\sin(3x)\cos(5y)$; and $(\tau_1, \tau_2)=(1,1)$. The load function $f=1$ (left); the load function $f=0$ (right).}
\label{case3-2}
\end{figure}

Figures \ref{case4-1}-\ref{case4-2} show the contour plots of the numerical solution $\lambda_0$ on the uniform triangular partition of the L-shaped domain $\Omega_2$. The convection vector is piece-wisely given by $\bbeta(x, y)=[-1, 1]$ for $y<0.5-x$ and $\bbeta(x, y)=[1, -1]$ elsewhere. The reaction is $c=1$. The inflow boundary data is $g=\sin(\pi x)\cos(\pi y)$. The parameters are set as $(\tau_1, \tau_2)=(1,1)$. Figure  \ref{case4-1} demonstrates the contour plots of the numerical solution $\lambda_0$ for the load functions $f=1$ and $f=0$ respectively when the $C^{-1}-P_1(T)/P_1(\partial T)/P_0(T)$ element  is employed. Figure \ref{case4-2} illustrates the contour plots of the numerical solution $\lambda_0$ for the load functions $f=1$ and $f=0$ respectively when the $C^{-1}-P_2(T)/P_2(\partial T)/P_1(T)$ element is used.

\begin{figure}[h]
\centering
\begin{tabular}{cc}
\resizebox{2.4in}{2.1in}{\includegraphics{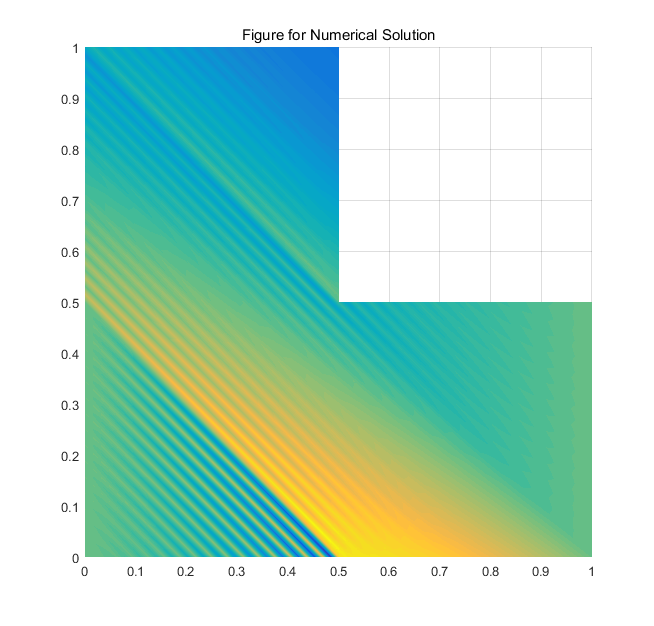}}
\resizebox{2.4in}{2.1in}{\includegraphics{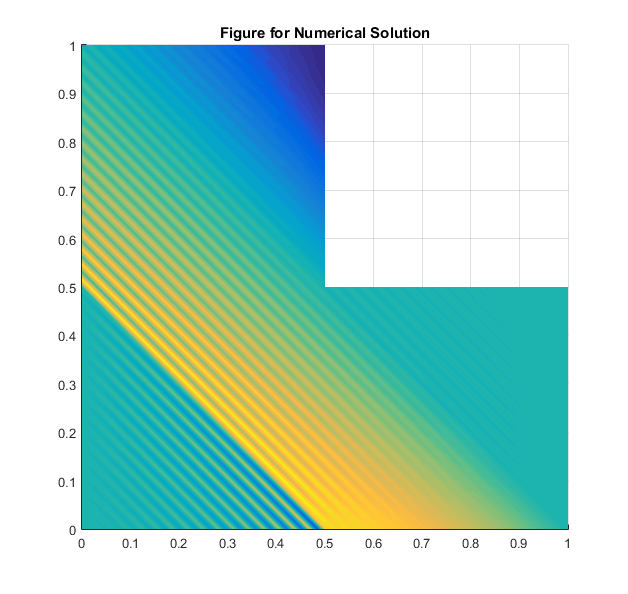}}
\end{tabular}
\caption{Contour plots of numerical solution $\lambda_0$ on the L-shaped domain $\Omega_2$; $C^{-1}-P_1(T)/P_1(\partial T)/P_0(T)$ element; uniform triangular partitions; convection vector $\bbeta=[-1, 1]$ for $y<0.5-x$ and $\bbeta=[1, -1]$ elsewhere; reaction $c=1$; the inflow boundary data $g=\sin(\pi x)\cos(\pi y)$; and $(\tau_1, \tau_2)=(1,1)$. The load function $f=1$ (left); the load function $f=0$ (right).}
\label{case4-1}
\end{figure}

\begin{figure}[h]
\centering
\begin{tabular}{cc}
\resizebox{2.4in}{2.1in}{\includegraphics{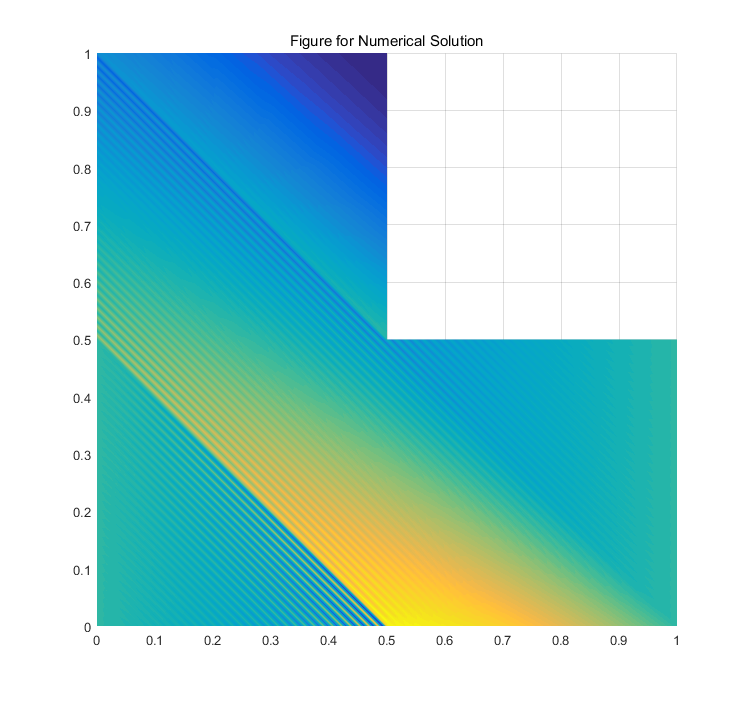}}
\resizebox{2.4in}{2.1in}{\includegraphics{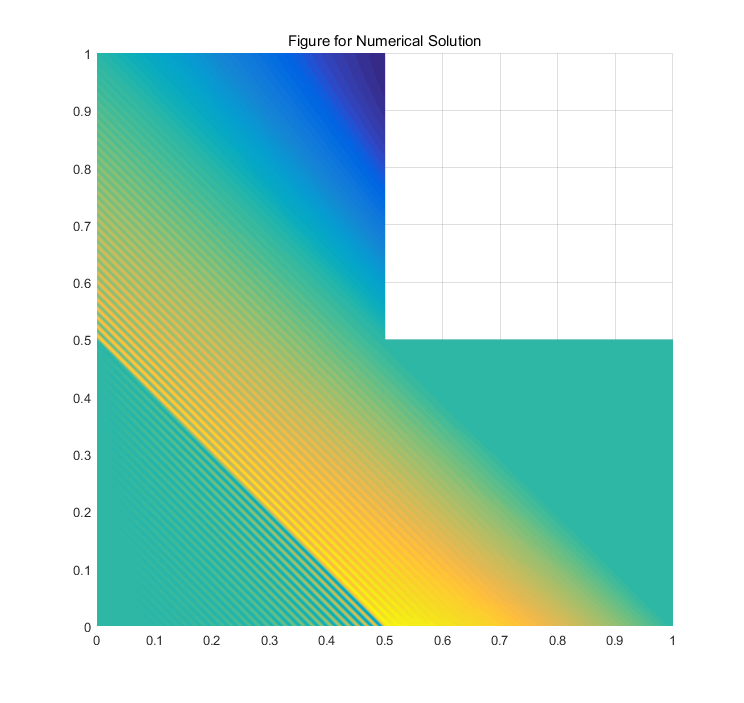}}
\end{tabular}
\caption{Contour plots of numerical solution $\lambda_0$ on the L-shaped domain $\Omega_2$; $C^{-1}-P_2(T)/P_2(\partial T)/P_1(T)$ element; uniform triangular partitions; convection $\bbeta=[-1, 1]$ for $y<0.5-x$ and $\bbeta=[1, -1]$ elsewhere; reaction $c=1$; the inflow boundary data $g=\sin(\pi x)\cos(\pi y)$; and $(\tau_1, \tau_2)=(1,1)$. The load function $f=1$ (left); the load function $f=0$ (right).}
\label{case4-2}
\end{figure}

Figures \ref{case2-1}-\ref{case2-2} show the contour plots of the numerical solution $\lambda_0$ on the uniform triangular partition of the cracked unit square domain $\Omega_3$. The convection vector is given by $\bbeta=[0.5-y, x-0.5]$; the reaction is $c=x-y$; the inflow boundary data is $g=\cos(5y)$; and the parameters are $(\tau_1, \tau_2)=(1,1)$. Figures  \ref{case2-1}-\ref{case2-2} demonstrate the contour plots of the numerical solution $\lambda_0$ for the load function $f=1$ (left ones) and the load function $f=0$ (right ones) when the $C^{-1}-P_1(T)/P_1(\partial T)/P_0(T)$ element and  the $C^{-1}-P_2(T)/P_2(\partial T)/P_1(T)$ element are employed respectively.

\begin{figure}[h]
\centering
\begin{tabular}{cc}
\resizebox{2.4in}{2.1in}{\includegraphics{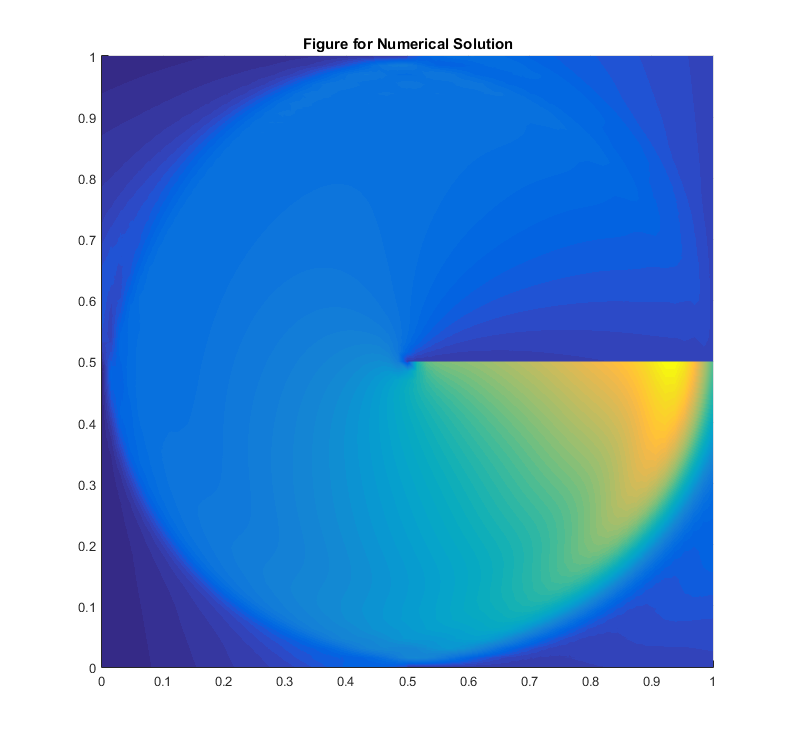}}
\resizebox{2.4in}{2.1in}{\includegraphics{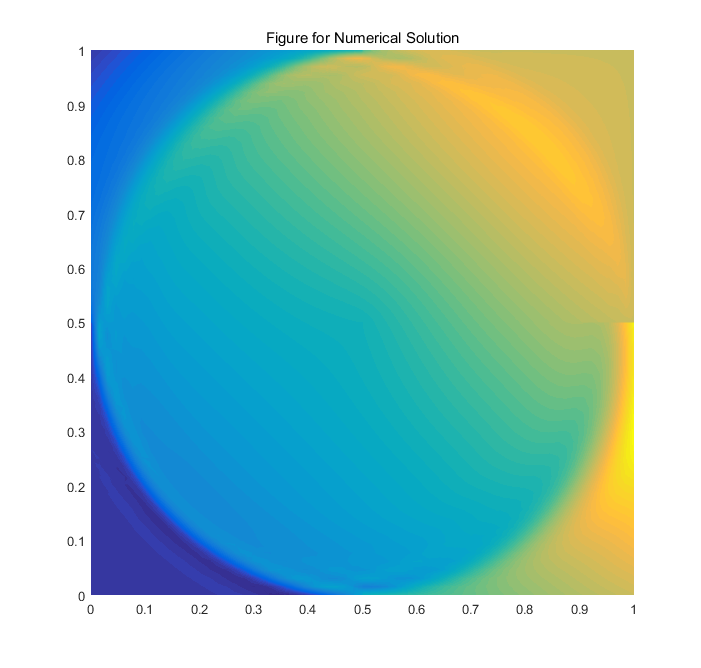}}
\end{tabular}
\caption{Contour plots of numerical solution $\lambda_0$ on the cracked square domain $\Omega_3$; $C^{-1}-P_1(T)/P_1(\partial T)/P_0(T)$ element; uniform triangular partitions; convection $\bbeta=[0.5-y, x-0.5]$; reaction $c=x-y$; the inflow boundary data $g=\sin(x)$; and $(\tau_1, \tau_2)=(1,1)$. The load function $f=1$ (left); the load function $f=0$ (right).}
\label{case2-1}
\end{figure}

\begin{figure}[h]
\centering
\begin{tabular}{cc}
\resizebox{2.4in}{2.1in}{\includegraphics{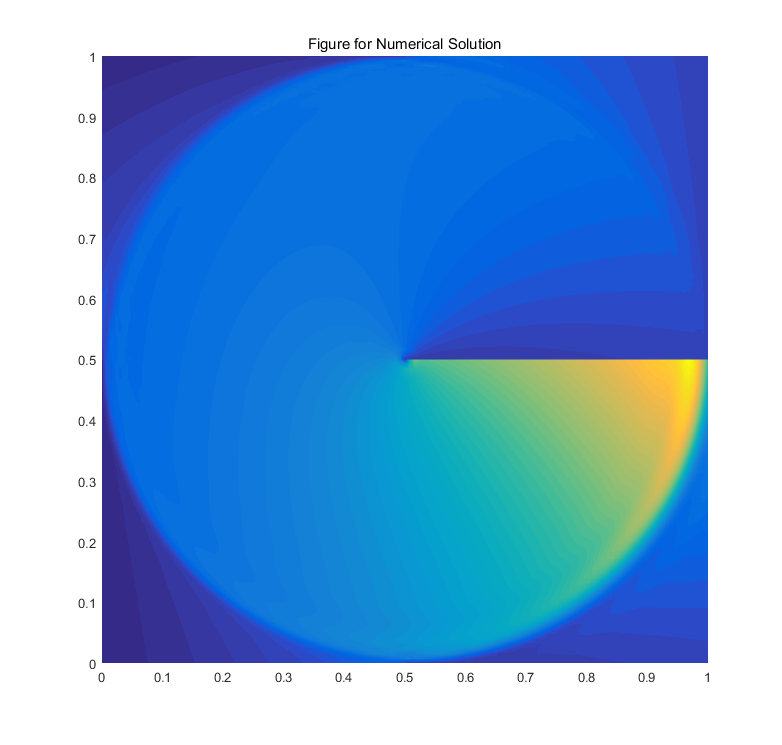}}
\resizebox{2.4in}{2.1in}{\includegraphics{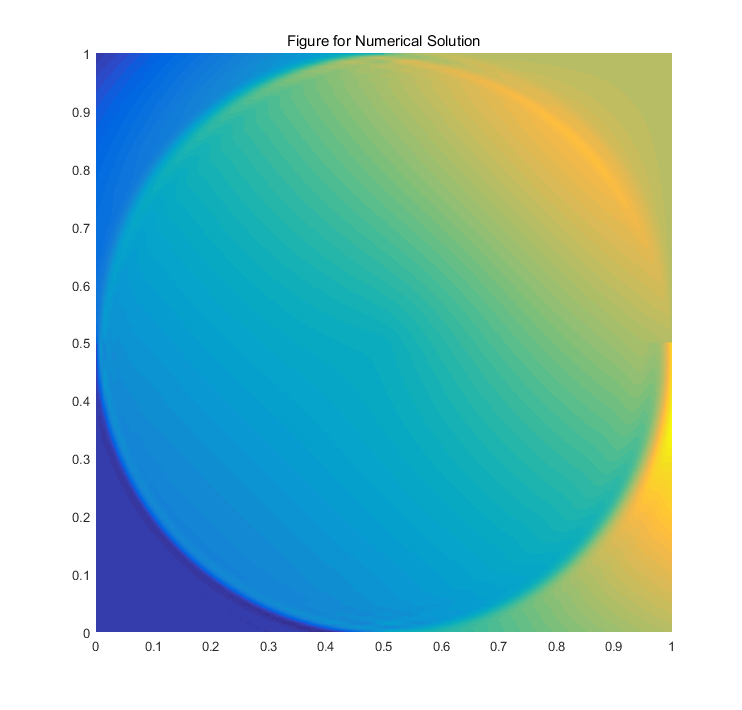}}
\end{tabular}
\caption{Contour plots of numerical solution $\lambda_0$ on the cracked square domain $\Omega_3$; $C^{-1}-P_2(T)/P_2(\partial T)/P_1(T)$ element; uniform triangular partitions; convection $\bbeta=[0.5-y, x-0.5]$; reaction $c=x-y$; the inflow boundary data $g=\sin(x)$; and $(\tau_1, \tau_2)=(1,1)$. The load function $f=1$ (left); the load function $f=0$ (right).}
\label{case2-2}
\end{figure}

In summary, the numerical performance of the primal-dual weak Galerkin scheme (\ref{al-general})-(\ref{al-general-2}) for solving the first-order linear convection problem \eqref{model} is consistent with the theory developed in this paper.  The numerical results reveal optimal-order of convergence for all the test problems. We are confident that the PD-WG scheme is a stable, accurate, and convergent numerical method for the first-order linear convection problem in non-divergence form.

\vfill\eject
\newpage

\end{document}